\pgfplotsset{compat=1.18} % ここでバージョンを指定
\theoremstyle{plain}
\newtheorem{thm}{Theorem}[section]
\newtheorem{lemm}[thm]{Lemma}
\newtheorem{prop}[thm]{Proposition}
\theoremstyle{definition}
\newtheorem{rem}[thm]{Remark}
\renewcommand{\div}{\operatorname{div}}
\newcommand{\dB}{\dot{B}}
\newcommand{\dH}{\dot{H}}
\newcommand{\supp}{\operatorname{supp}}
\renewcommand{\leq}{\leqslant}
\renewcommand{\geq}{\geqslant}
\newcommand{\pnabla}{{\nabla}^{\perp}}
\newcommand{\n}[1]{{\left\|#1\right\|}}
\newcommand{\lp}[1]{\left[#1\right]}
\newcommand{\Mp}[1]{\left\{#1\right\}}
\renewcommand{\sp}[1]{\left(#1\right)}
\newcommand{\abso}[1]{{\left|#1\right|}}
\begin{document}
\title[Asymptotic instability for the forced Navier--Stokes equations]
{Asymptotic instability for the forced Navier--Stokes equations in critical Besov spaces}
%%%%%%%%%%%%%%%%%%
\author[M.~Fujii]{Mikihiro Fujii}
\address[M.~Fujii]{Graduate School of Science, Nagoya City University, Nagoya, 467-8501, Japan}
\email[M.~Fujii]{fujii.mikihiro@nsc.nagoya-cu.ac.jp}
%%%%%%%%%%%%%%%%%%
\author[H.~Tsurumi]{Hiroyuki Tsurumi}
\address[H.~Tsurumi]{Graduate School of Technology, Industrial and Social Sciences, Tokushima University, Tokushima, 770--8506, Japan}
\email[H.~Tsurumi]{tsurumi.hiroyuki@tokushima-u.ac.jp}
%%%%%%%%%%%%%%%%%%
%\date{}
%\thanks{}
\keywords{Navier--Stokes equations, external forces, the asymptotic instability, critical Besov spaces}
\subjclass[2020]{35Q30, 35B35, 76E09}
\begin{abstract}
The asymptotic stability is one of the classical problems in the field of mathematical analysis of fluid mechanics.
In $\mathbb{R}^n$ with $n \geq 3$, it is easily proved by the standard argument that if the given small external force decays at temporal infinity, then the small forced Navier--Stokes flow also strongly converges to zero as time tends to infinity in the framework of the critical Besov spaces $\dot{B}_{p,q}^{n/p-1}(\mathbb{R}^n)$ with $1 \leq p < n$ and $1 \leq q < \infty$. 
In the present paper, we show that this asymptotic stability {\it fails} for $p \geq n$ with $n \geq 3$
in the sense that there exist arbitrary small external forces whose critical Besov norm decays in large time, whereas the corresponding Navier--Stokes flows oscillate and do not strongly converge as $t \to \infty$ in the framework of the critical Besov spaces $\dot{B}_{p,q}^{n/p-1}(\mathbb{R}^n)$.
Moreover, we find that the situation is different in the two-dimensional case $n=2$ and show the forced Navier--Stokes flow is asymptotically unstable in $\dot{B}_{p,1}^{2/p-1}(\mathbb{R}^2)$ for all $1 \leq p \leq \infty$.
Our instability does not appear in the linear level but is caused by the nonlinear interaction from external forces.
\end{abstract}
\maketitle

%\tableofcontents

\section{Introduction}\label{sec:intro}

Let us consider the incompressible Navier--Stokes equations with external forces on $\mathbb{R}^n$ with $n \geq 2$:
\begin{align}\label{eq:f-NS}
    \begin{cases}
    \partial_t u - \Delta u + \mathbb{P}\div (u \otimes u ) = \mathbb{P}f, & t>0, x \in \mathbb{R}^n, \\
    \div u = 0, & t \geq 0, x \in \mathbb{R}^n,\\
    u(0,x)=a(x), & x \in \mathbb{R}^n,
    \end{cases}
\end{align}
where $u=u(t,x):[0,\infty) \times \mathbb{R}^n \to \mathbb{R}^n$ is the unknown solenoidal velocity field of the fluid, while  $a=a(x):\mathbb{R}^n \to \mathbb{R}^n$ is the solenoidal initial datum and $f=f(t,x):(0,\infty) \times \mathbb{R}^n \to \mathbb{R}^n$ is the given external force.
We denote by $\mathbb{P}:=I+\nabla \div (-\Delta)^{-1}$ the Helmholtz projection to the divergence-free vector fields.
It is well known that \eqref{eq:f-NS} has an invariant scaling structure; if $u$ solves \eqref{eq:f-NS}, then the rescaled functions $u_{\lambda}(t,x):=\lambda u (\lambda^2 t, \lambda x)$ also satisfy \eqref{eq:f-NS} with $f$ replaced by $f_{\lambda}(t,x) := \lambda^3 f (\lambda^2 t, \lambda x)$ for all $\lambda>0$.
We say that the solution space $X=X(\mathbb{R}^n)$ and the external force space $Y=Y((0,\infty)\times \mathbb{R}^n)$ are critical if they satisfy $\n{u_{\lambda}(0,\cdot)}_X=\n{u(0,\cdot)}_X$ and $\n{f_{\lambda}}_{Y}=\n{f}_Y$ for all $\lambda>0$.
It is well-known as the Fujita--Kato principle that considering the solvability in the scaling critical framework is important.  
From a general perspective, it is expected that if $X$ and $Y=BC((0,\infty);\widetilde{Y})$ are scaling critical spaces for \eqref{eq:f-NS} and given data $a \in X$ and $f \in Y$ are sufficiently small, then \eqref{eq:f-NS} admits a unique small global solution $u \in BC([0,\infty);X)$ with the asymptotic stability, which means that if $\n{f(t)}_{\widetilde{Y}} \to 0$ as $t \to \infty$, then $\n{u(t)}_X \to 0$ as $t \to \infty$.
In this paper, we consider the critical Besov space setting $X=\dB_{p,q}^{n/p-1}(\mathbb{R}^n)$ and $\widetilde{Y}=\dB_{p,q}^{n/p-3}(\mathbb{R}^n)$ and investigate which indices $p$ and $q$ ensure or fail to ensure the asymptotic stability for \eqref{eq:f-NS}.

\subsection{The place of our study in the related literature}
In order to present the statement of our main results and the context of this study more precisely, we begin by recalling known results related to our work.
In the case of $f \equiv 0$, local well-posedness for large data and global well-posedness for small data were proven in the following critical scaling spaces 
\begin{align}
    &
    \dot{H}^{n/2-1}(\mathbb{R}^n) \hookrightarrow L^n(\mathbb{R}^n) \hookrightarrow L^{n,\infty}(\mathbb{R}^n)
    \hookrightarrow {\rm BMO}^{-1}(\mathbb{R}^n),
    \\
    &
    \dB_{p,q}^{n/p-1}(\mathbb{R}^n) \hookrightarrow {\rm BMO}^{-1}(\mathbb{R}^n) \qquad (1 \leq p < \infty, 1 \leq q \leq \infty). 
\end{align}
See Fujita--Kato \cite{Fuj-Kat-64} for $\dH^{n/2-1}(\mathbb{R}^n)$, and Kato \cite{Kat-84} and Giga--Miyakawa \cite{Gig-Miy-85} for $L^n(\mathbb{R}^n)$.
For the case of critical Besov spaces, Cannone--Planchon \cite{Can-Pla-96} and and Planchon \cite{Pla-98} proved the well-posedness of \eqref{eq:f-NS} with $f \equiv 0$ in $\dB_{p,q}^{n/p-1}(\mathbb{R}^n)$ with $1 \leq p < \infty$ and $1 \leq q \leq \infty$ and the solution satisfies $\n{u(t)}_{\dB_{p,q}^{n/p-1}} \to 0$ as $t \to \infty$ provided that $q<\infty$.
On the limiting case $p=\infty$, Koch--Tataru \cite{Koc-Tat-01} proved the global well-posedness in ${\rm BMO}^{-1}(\mathbb{R}^n)$, whereas the ill-posedness in $\dB_{\infty,q}^{-1}(\mathbb{R}^n)$ with all $1 \leq q \leq \infty$ was proven by Bourgain--Pavlovi\'{c} \cite{Bou-Pav-08}, Yoneda \cite{Yon-10}, and Wang \cite{Wan-15} in the sense that the solution-map is discontinuous. 
For the case of $f \not \equiv 0$, the interaction between the energy decay due to the viscous term and the energy supply from external forces makes the problem on the decay of solutions a more delicate one, and the functional framework for the well-posedness may be consequently different.
Yamazaki \cite{Yam-00} considered the case where the force may not decay in the temporal infinity, and showed the unique existence of small solutions $u \in C((0,\infty);L^{n,\infty}(\mathbb{R}^n))$ to \eqref{eq:f-NS} provided that $n \geq 3$ and $f$ has the form $f=\div F$ with small $F \in L^{\infty}(0,\infty;L^{n/2,\infty}(\mathbb{R}^n))$; more precisely, if $\n{a}_{L^{n,\infty}}+
\n{F}_{L^{\infty}(0,\infty;L^{n/2,\infty})} \ll 1$, then \eqref{eq:f-NS} admits a unique global solution $u$ with $\n{u}_{L^{\infty}(0,\infty;L^{n,\infty})} \leq C(\n{a}_{L^{n,\infty}}+\n{F}_{L^{\infty}(0,\infty;L^{n/2,\infty})})$.
See Ogawa--Rajopadhye--Schonbek \cite{Oga-Raj-Sch-97}, Okabe--Tsutsui \cites{Oka-Tsu-17, Oka-Tsu-22}, and Takeuchi \cite{Tak-25} for more related results on the forced Navier--Stokes equations. 
For the asymptotic stability of global solutions to  \eqref{eq:f-NS}, 
Kozono--Shimizu \cite{Koz-Shi-18} and Takeuchi \cite{Tak-pre} considered  in scaling critical Besov spaces with temporal weight.
In \cite{Tak-pre}, it was shown that for $n \geq 2$, $0 \leq \theta < 1$ (additionally $\theta \neq 0$ if $n=2$), and $1\leq p < n/(1-\theta)$,
if $a \in \dB_{p,\infty}^{n/p-1}(\mathbb{R}^n)$ and $f \in L^{\infty}_{\rm loc}((0,\infty);\dB_{p,\infty}^{n/p-3+2\theta}(\mathbb{R}^n))$ satisfy
\begin{align}
    \n{a}_{\dB_{p,\infty}^{n/p-1}}
    +
    \sup_{t>0}
    t^{\theta}
    \n{f(t)}_{\dB_{p,\infty}^{n/p-3+2\theta}}
    \ll 1,
\end{align}
then, there exists a unique solution $u$ with 
\begin{align}
    \sup_{t>0}\n{u(t)}_{\dB_{p,\infty}^{n/p-1}}
    +
    \sup_{t>0}t^{\theta}\n{u(t)}_{\dB_{p,\infty}^{n/p-1+2\theta}}
    \ll 1.
\end{align}
Therefore, when $\theta>0$, this result implies the sub-critical norm $\n{u(t)}_{\dB_{p,\infty}^{n/p-1+2\theta}}$ of the solution decays as $O(t^{-\theta})$, provided that $\n{f(t)}_{\dB_{p,\infty}^{n/p-3+2\theta}}=O(t^{-\theta})$.
% However, \cite{Tak-pre} does not reveal the asymptotic stability when $\theta=0$.

In the present paper, we focus on the critical case $\theta=0$ in \cite{Tak-pre} and first observe that $\n{u(t)}_{\dB_{p,q}^{n/p-1}}=o(1)$ as $t \to \infty$ if $\n{f(t)}_{\dB_{p,q}^{n/p-3}}=o(1)$ as $t \to \infty$ holds for $n \geq 3$, $1 \leq p < n$, and $1 \leq q < \infty$.
Motivated by this, our central interest is to prove the asymptotic stability does not hold for the case of $n \geq 3$, $p \geq n$, and the case of $n=2$, $1 \leq p \leq \infty$.
\subsection{Our results} 
Let us mention our results precisely.
We begin with the stability result.
\begin{prop}\label{prop:stab}
    Let $n$, $p$, and $q$ satisfy
    \begin{align}\label{sta:expo}
        n \geq 3, \qquad 1 \leq p < n, \qquad 1 \leq q \leq \infty.
    \end{align}
    Then, there exist positive constants $\varepsilon=\varepsilon(n,p,q)$ and $C=C(n,p,q)$ such that if $a \in \dB_{p,q}^{n/p-1}(\mathbb{R}^n)$ with $\div a = 0$ and $f \in \widetilde{C}((0,\infty);\dB_{p,q}^{n/p-3}(\mathbb{R}^n))$ satisfy
    \begin{align}
        \n{a}_{\dB_{p,q}^{n/p-1}}
        +
        \n{f}_{\widetilde{L^{\infty}}(0,\infty;\dB_{p,q}^{n/p-3})}
        \leq
        \varepsilon,
    \end{align}
    then \eqref{eq:f-NS} possesses a unique solution $u \in \widetilde{C}([0,\infty);\dB_{p,q}^{n/p-1}(\mathbb{R}^n))$ with the estimate
    \begin{align}\label{u-apriori}
        \n{u}_{\widetilde{L^{\infty}}(0,\infty;\dB_{p,q}^{n/p-1})}
        \leq
        C
        \sp{
        \n{a}_{\dB_{p,q}^{n/p-1}}
        +
        \n{f}_{\widetilde{L^{\infty}}(0,\infty;\dB_{p,q}^{n/p-3})}
        }.
    \end{align}
    Moreover, if we additionally assume 
    \begin{align}
        1 \leq q<\infty, 
        \qquad
        \lim_{t \to \infty}
        \n{f(t)}_{\dB_{p,q}^{n/p-3}}
        =0,
    \end{align}
    then, it holds
    \begin{align}\label{u-lim}
        \lim_{t \to \infty}
        \n{u(t)}_{\dB_{p,q}^{n/p-1}} = 0.
    \end{align}
\end{prop}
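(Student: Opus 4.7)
The plan is to handle the two halves of the statement separately. For the existence, uniqueness, and a priori bound \eqref{u-apriori}, I would run a Picard iteration for the mild formulation
\begin{equation}
u(t) = e^{t\Delta}a + \int_0^t e^{(t-s)\Delta}\mathbb{P}f(s)\,ds - \int_0^t e^{(t-s)\Delta}\mathbb{P}\div(u\otimes u)(s)\,ds
\end{equation}
inside the Chemin--Lerner space $E:=\widetilde{L^\infty}(0,\infty;\dB_{p,q}^{n/p-1})$. The three ingredients are standard, relying only on blockwise heat-kernel bounds and paraproducts: (i) $\n{e^{t\Delta}a}_{E}\leq C\n{a}_{\dB_{p,q}^{n/p-1}}$; (ii) a ``maximal-regularity-at-the-$\widetilde{L^\infty}$-level'' estimate $\n{\int_0^t e^{(t-s)\Delta}g(s)\,ds}_{E}\leq C\n{g}_{\widetilde{L^\infty}(\dB_{p,q}^{n/p-3})}$, obtained by integrating $\n{\Delta_j e^{\tau\Delta}}_{L^p\to L^p}\leq Ce^{-c\tau 2^{2j}}$ in $\tau$ and thereby gaining two derivatives per dyadic block; and (iii) the product rule $\n{u\otimes v}_{\widetilde{L^\infty}(\dB_{p,q}^{n/p-2})}\leq C\n{u}_{E}\n{v}_{E}$ via Bony's paraproduct decomposition, which is valid precisely when $n/p-1>0$, i.e., under the hypothesis $p<n$. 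The $\widetilde{C}$-in-time regularity then follows by approximating the data with spectrally localized functions.

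For the decay \eqref{u-lim}, I would decompose $u(t)=e^{t\Delta}a+L(t)-N(t)$ into data, force, and nonlinear Duhamel terms. The semigroup identity $\int_0^T e^{(t-s)\Delta}(\cdot)\,ds=e^{(t-T)\Delta}\int_0^T e^{(T-s)\Delta}(\cdot)\,ds$ lets me split each $\int_0^t$ into $e^{(t-T)\Delta}(\cdot)(T)+\int_T^t$. For any $h\in\dB_{p,q}^{n/p-1}$, the hypothesis $q<\infty$ makes $\sum_j 2^{j(n/p-1)q}\n{\Delta_j h}_{L^p}^q$ tight, and dominated convergence combined with $\n{\Delta_j e^{\tau\Delta}h}_{L^p}\leq e^{-c\tau 2^{2j}}\n{\Delta_j h}_{L^p}$ gives $\n{e^{\tau\Delta}h}_{\dB_{p,q}^{n/p-1}}\to 0$ as $\tau\to\infty$. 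Applied to $h=a,L(T),N(T)$, this kills the ``old'' portions of each term, while the residual $\int_T^t$ in $L$ is controlled by ingredient (ii) on $[T,\infty)$ together with the analogous dominated-convergence statement $\n{f}_{\widetilde{L^\infty}(T,\infty;\dB_{p,q}^{n/p-3})}\to 0$ (again requiring $q<\infty$).

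The decisive step is the residual $\int_T^t$ of $N$, which I would close by a bootstrap between
\begin{equation}
M:=\limsup_{t\to\infty}\n{u(t)}_{\dB_{p,q}^{n/p-1}}, \qquad \eta(T):=\n{u}_{\widetilde{L^\infty}(T,\infty;\dB_{p,q}^{n/p-1})}, \qquad L:=\lim_{T\to\infty}\eta(T).
\end{equation}
Ingredients (ii)--(iii) applied on $[T,\infty)$ give $\n{\int_T^t e^{(t-s)\Delta}\mathbb{P}\div(u\otimes u)\,ds}_{\dB_{p,q}^{n/p-1}}\leq C\eta(T)^2$; sending $t\to\infty$ and then $T\to\infty$ yields $M\leq CL^2$. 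Conversely, reading $u$ on $[T,\infty)$ as the forced Navier--Stokes flow with datum $u(T)$ and force $f|_{[T,\infty)}$, the bound \eqref{u-apriori} applied on the shifted interval (where the quadratic self-interaction is absorbed using the global smallness $\eta(T)\leq C\varepsilon$) produces $\eta(T)\leq C\sp{\n{u(T)}_{\dB_{p,q}^{n/p-1}}+\n{f}_{\widetilde{L^\infty}(T,\infty;\dB_{p,q}^{n/p-3})}}$, whence $L\leq CM$. Combining yields $M\leq C^3 M^2$, and since $M\leq C\varepsilon$ can be made as small as we wish, this forces $M=0$. The main obstacle is precisely arranging this closure: a pointwise-in-time bound on the nonlinear term does not close the bootstrap, and one must monitor the tail in the Chemin--Lerner norm $\eta(T)$ so as to respect the dyadic structure—this is where the hypothesis $q<\infty$ is essential, both for the density and dominated-convergence arguments in the linear piece and for passing $\eta(T)\to L$.
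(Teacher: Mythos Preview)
Your proposal is correct and follows essentially the same approach as the paper's proof: Picard iteration in $\widetilde{L^\infty}(0,\infty;\dB_{p,q}^{n/p-1})$ via the bilinear estimate that holds precisely when $p<n$, and for the decay a split at time $T$ combined with dominated convergence (using $q<\infty$) for the heat-smoothed part and for the Chemin--Lerner tail of $f$, with the nonlinear remainder absorbed through smallness. The only cosmetic difference is that the paper bundles everything into $u(t)=e^{(t-T)\Delta}u(T)+\int_T^t(\cdots)$ and absorbs the quadratic term in a single pass to obtain $\limsup_{t\to\infty}\n{u}_{\widetilde{L^\infty}(t,\infty;\dB_{p,q}^{n/p-1})}\leq 2C_0\limsup_{T\to\infty}\n{f}_{\widetilde{L^\infty}(T,\infty;\dB_{p,q}^{n/p-3})}=0$, whereas you separate into data/force/nonlinear pieces and close with the $M$--$L$ bootstrap; the underlying ingredients are identical.
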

Proposition \ref{prop:stab} is an improvement of the result of \cite{Tak-pre}*{Theorem 1.1} with $\theta=0$ as we are able to treat general third index $q$ of the Besov spaces by using the Chemin--Lerner spaces and also claim the decay property in the case of $\theta=0$ in \cite{Tak-pre}.
Therefore, this proposition is one of the novelties of this paper, while it is {\it not} our main outcome, since the proof may be easily obtained by the method of \cites{Pla-98}.
See in Appendix \ref{sec:pf_prop} for the proof.

Let us focus on the range \eqref{sta:expo} of $(n,p,q)$.
The assumption \eqref{sta:expo} ensures the bilinear estimate
\begin{align}\label{key-bilin}
    \begin{aligned}
    &
    \n{\int_0^t e^{(t-\tau)\Delta}\mathbb{P}\div (u\otimes v)(\tau)d\tau}_{\widetilde{L^{\infty}}(0,\infty;\dB_{p,q}^{n/p-1})}
    \\
    &
    \quad
    \leq
    C
    \n{u}_{\widetilde{L^{\infty}}(0,\infty;\dB_{p,q}^{n/p-1})}
    \n{v}_{\widetilde{L^{\infty}}(0,\infty;\dB_{p,q}^{n/p-1})},
    \end{aligned}
\end{align}
which plays a key role in the proof of Proposition \ref{prop:stab}, whereas it seems a natural question what happens in the case where $n$, $p$, and $q$ do not satisfy \eqref{sta:expo}.
The central interest of this paper is to show that uniform and asymptotic stability \eqref{u-apriori} and \eqref{u-lim} {\it fail} unless $n$, $p$, and $q$ admit \eqref{sta:expo}.
Now, let us summarize\footnote{See Theorems \ref{thm:nD} and \ref{thm:2D} below for slightly stronger results.} our main result of the present paper as follows.
\begin{thm}\label{thm:rough}
    Let $n$, $p$, and $q$ satisfy either \textup{(1)}, \textup{(2)}, or \textup{(3)}:
    \begin{enumerate}
        \item [\textup{(1)}] $n \geq 3$,\quad $n<p \leq \infty$,\quad $1 \leq q \leq  \infty$,
        \item [\textup{(2)}] $n \geq 3$,\quad $p=n$,\quad $2<q \leq \infty$,
        \item [\textup{(3)}] $n = 2$,\quad $1 \leq p \leq \infty$,\quad $q=1$.
    \end{enumerate}
    Then, there exist positive constants $\varepsilon_0=\varepsilon_0(n,p,q)$ and $c_0=c_0(n,p,q)$ such that for any $0<\varepsilon\leq \varepsilon_0$, there exists an external force $f_{\varepsilon} \in \widetilde{C}((0,\infty);\dB_{p,q}^{n/p-3}(\mathbb{R}^n))$ such that 
    \begin{align}
        \n{f_{\varepsilon}}_{\widetilde{L^{\infty}}(0,\infty;\dB_{p,q}^{n/p-3})}\leq \varepsilon,
        \qquad
        \lim_{t \to \infty}
        \n{f_{\varepsilon}(t)}_{\dB_{p,q}^{n/p-3}}
        =
        0,
    \end{align}
    and
    \eqref{eq:f-NS} with $a=0$ and $f=f_{\varepsilon}$ admits a global solution $u_{\varepsilon} \in \widetilde{C}([0,\infty);\dB_{p,q}^{n/p-1}(\mathbb{R}^n))$ satisfying 
    \begin{align}\label{u_e:osc}
        \limsup_{t \to \infty}\n{u_{\varepsilon}(t)}_{\dB_{p,q}^{n/p-1}}
        \geq c_0,
        \qquad
        \liminf_{t \to \infty}
        \n{u_{\varepsilon}(t)}_{\dB_{p,q}^{n/p-1}}
        =0.
    \end{align}
    In particular, $u_{\varepsilon}(t)$ does not strongly converge to any flow in $\dB_{p,q}^{n/p-1}(\mathbb{R}^n)$ as $t \to \infty$.
\end{thm}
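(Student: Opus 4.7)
The strategy is to build $f_\varepsilon$ explicitly as a superposition of pulses
\[
f_\varepsilon(t,x) = \sum_{k\geq k_0(\varepsilon)} \chi(t-T_k)\, G_k(x),
\]
where $\chi$ is a smooth time bump supported in $[0,1]$, the centers $T_k\uparrow\infty$ are widely spaced, and each building block $G_k$ is divergence-free and spectrally localized at a scale $2^{j_k}$ with $j_k\to\infty$. Amplitudes are tuned so that $\n{G_k}_{\dB_{p,q}^{n/p-3}}\to 0$; combined with the disjointness of the time supports, this yields $\n{f_\varepsilon}_{\widetilde{L^\infty}(0,\infty;\dB_{p,q}^{n/p-3})}\leq\varepsilon$ and $\n{f_\varepsilon(t)}_{\dB_{p,q}^{n/p-3}}\to 0$ as $t\to\infty$.

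Once $f_\varepsilon$ is in hand, the plan is to produce a global solution $u_\varepsilon$ by a fixed-point argument in a slightly larger auxiliary scale where the critical bilinear estimate \eqref{key-bilin} is not required (for instance the Kato/Yamazaki $L^{n,\infty}$-type space, or the weighted-in-time Besov space with $\theta>0$ of \cite{Tak-pre}), and to obtain membership in $\widetilde{C}([0,\infty);\dB_{p,q}^{n/p-1}(\mathbb{R}^n))$ by bootstrapping. I then decompose $u_\varepsilon = v_\varepsilon + W_\varepsilon + R_\varepsilon$, where $v_\varepsilon(t) = \int_0^t e^{(t-\tau)\Delta}\mathbb{P}f_\varepsilon\,d\tau$ is the linear Stokes part, $W_\varepsilon(t) = -\int_0^t e^{(t-\tau)\Delta}\mathbb{P}\div(v_\varepsilon\otimes v_\varepsilon)(\tau)\,d\tau$ is the first nonlinear iterate, and $R_\varepsilon$ collects the higher-order Picard contributions. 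The linear semigroup estimate gives $\n{v_\varepsilon(t)}_{\dB_{p,q}^{n/p-1}}\to 0$, while routine bookkeeping yields $\n{R_\varepsilon(t)}_{\dB_{p,q}^{n/p-1}} = O(\varepsilon^3)$ uniformly in $t$, so the entire oscillation analysis reduces to the single quantity $W_\varepsilon$.

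Since the pulses are time-disjoint, $v_\varepsilon\otimes v_\varepsilon \approx \sum_k v_k\otimes v_k$, where $v_k$ is the Stokes response to the $k$th pulse. Just after the $k$th pulse I would establish
\[
\n{W_\varepsilon(T_k+\sigma_k)}_{\dB_{p,q}^{n/p-1}} \geq 2c_0
\]
for a constant $c_0 = c_0(n,p,q)>0$ independent of $\varepsilon$, by choosing $G_k$ so that $\mathbb{P}\div(\Phi_{j_k}\otimes \Phi_{j_k})$ carries a resonant component whose $\dB_{p,q}^{n/p-1}$-norm is of order one even though $\n{v_k}_{\dB_{p,q}^{n/p-1}}\leq C\varepsilon$; this is precisely the quantitative manifestation of the failure of \eqref{key-bilin}. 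Conversely, at the quiet times $t=T_{k+1}^-$ (just before the next pulse) the heat semigroup has dissipated the relevant frequency bands of $W_\varepsilon$, which forces the choice $T_{k+1}-T_k\gg 1$, and gives $\n{W_\varepsilon(T_{k+1}^-)}_{\dB_{p,q}^{n/p-1}}\to 0$. Combined with the decay of $v_\varepsilon$ and the smallness of $R_\varepsilon$, this yields $\limsup_{t\to\infty}\n{u_\varepsilon(t)}_{\dB_{p,q}^{n/p-1}}\geq c_0$ and $\liminf_{t\to\infty}\n{u_\varepsilon(t)}_{\dB_{p,q}^{n/p-1}}=0$, which is the claimed oscillation.

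The decisive difficulty is the $\varepsilon$-independent lower bound on $\n{W_\varepsilon(T_k+\sigma_k)}_{\dB_{p,q}^{n/p-1}}$. Unlike the routine upper bounds, it requires pinpointing an explicit component of $\Phi_{j_k}\otimes \Phi_{j_k}$ whose Besov (not just $L^p$) norm can be bounded below, and the appropriate choice of $\Phi_{j_k}$ differs from case to case: in case \textup{(1)} ($p>n$) it is natural to exploit the Bernstein/embedding mismatch at large $p$ on a single dyadic block; in case \textup{(2)} ($p=n$, $q>2$) one designs $\Phi_{j_k}$ so that the self-interaction spreads mass over many dyadic blocks, making the $\ell^q$ sum with $q>2$ unbounded; and in case \textup{(3)} ($n=2$, $q=1$) one engineers a genuinely two-dimensional obstruction visible in the $\ell^1$ summation. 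In each regime one must simultaneously verify that the cross-pulse products $v_k\otimes v_{k'}$ for $k\neq k'$ are negligible and that the higher-iterate remainder $R_\varepsilon$ does not swamp the lower bound, both of which is where the wide time separation and sharp frequency localization of the construction intervene.
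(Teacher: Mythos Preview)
Your overall architecture---disjoint time pulses, a lower bound on the second iterate coming from a resonant low-frequency output of the self-interaction, and remainder control in an auxiliary space---matches the paper's. Two points, however, are genuine gaps rather than omitted details.

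First, the assertion that ``routine bookkeeping yields $\n{R_\varepsilon(t)}_{\dB_{p,q}^{n/p-1}} = O(\varepsilon^3)$ uniformly in $t$'' is precisely what cannot hold here: a uniform cubic bound on the higher Picard iterates in $\widetilde{L^\infty}(\dB_{p,q}^{n/p-1})$ would follow from the bilinear estimate \eqref{key-bilin}, whose \emph{failure} is the whole mechanism of the theorem. The paper never obtains such a bound. For $n\geq 3$ it controls the remainder only in $L^{n,\infty}$ via Lemma~\ref{lemm:Yamazaki}, uses $L^{n,\infty}\hookrightarrow \dB_{\infty,\infty}^{-1}$, and establishes the lower bound for $u_\varepsilon^{(2)}$ in the weaker norm $\dB_{\infty,\infty}^{-1}$; the $\dB_{p,q}^{n/p-1}$ lower bound on $u_\varepsilon$ then comes for free from $\dB_{p,q}^{n/p-1}\hookrightarrow \dB_{\infty,\infty}^{-1}$. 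Indeed the paper's remainder bound in $\dB_{r,\sigma}^{n/r-1}$ carries a factor $\delta^{2-4r/(\rho(r-n))}$ with negative exponent, which diverges as the pulse frequency increases, so no uniform critical-Besov control on $R_\varepsilon$ is available.

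Second, in case \textup{(3)} both of your proposed auxiliary spaces fail: the Yamazaki estimate (Lemma~\ref{lemm:Yamazaki}) requires $n\geq 3$, and the weighted-in-time framework of \cite{Tak-pre} for $n=2$ needs $\theta>0$, hence lives in a strictly subcritical space and does not yield a contraction at the critical regularity. The paper handles $n=2$ by introducing an $N$-dependent norm $\n{\cdot}_{X^N}$ whose bilinear constant is of order $N$ (Lemma~\ref{lemm:nonlin-2dim}); this forces the $k$th pulse to have duration $2^{2N_k}$ with $N_k\to\infty$, so the auxiliary-space parameter and the pulse length are coupled. That coupling, together with the need to verify that $\n{u_\varepsilon(T_k)}_{\dB_{1,1}^{1}}$ is small enough to restart, is also why the paper proceeds by induction on $k$ (Propositions~\ref{prop:nD} and~\ref{prop:2D}) rather than via a single global fixed point as you propose.
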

\begin{rem}
    Some remarks on Theorem \ref{thm:rough} are in order.
    \begin{enumerate}
        \item Since $c_0$ is independent of $\varepsilon$, our result implies not only the asymptotic instability but also uniform instability, that is, the failure of the global uniform a priori estimate \eqref{u-apriori} with $a=0$.
        \item In Theorem \ref{thm:rough}, it seems difficult to specify the decay rare of the external force; see next subsection for the reason.
        On the other hand, if we aim to find a solution $u_{\varepsilon}$ with \eqref{u_e:osc} replaced by
        \begin{align}
            \liminf_{t \to \infty}
            \n{u_{\varepsilon}(t)}_{\dB_{p,q}^{n/p-1}}
            \geq c_0,
        \end{align}
        we may provide an example of external force with the decay rate $O(t^{-(1-n/p)})$ for the case of $n \geq 3$ and $p>n$. See Theorem \ref{prop:nD-non-osci} below for detail.
        %%%%
        \item Comparing Proposition \ref{prop:stab} and Theorem \ref{thm:rough}, the case of $p=n$, $1 \leq q \leq 2$ with $n \geq 3$ and the case of $1 \leq p \leq \infty$, $q>1$ with $n=2$ remains open.
        For the former case, it seems possible to obtain the instability for some $1 \leq q\leq 2$ by making use of the method of Li--Yu--Zhu \cites{Li-Yu-Zhu-25}; we do not focus on this case since the structure of the paper may be too complicated.
        However, it seems to be hopeless to achieve the latter case by using any of the methods used in previous research.
    \end{enumerate}
\end{rem}

\subsection{Idea of the proof} 
Let us mention our method for the proof of Theorem \ref{thm:rough}.
Here, we only focus on the simplest case of $n \geq 3$ with $p>n$ since the other cases may be considered by following the same split with some modification on the nonlinear estimates and choice of the external forces. 
See the explanation after Remark \ref{rem:2D} for more precise circumstance in the two-dimensional case.
We construct the external force with the intermittent temporal support $[0,T_*] \cup [T_1,T_1+T_*]\cup \cdots \cup [T_k,T_{k}+T_*] \cup [T_{k+1},T_{k+1}+T_*] \cup \cdots$; see Figure \ref{fig:1}. 
Here, the sequence $\{T_k\}_{k=1}^{\infty}$ and a constant $T_*>0$ are to be suitably defined so that the critical Besov norm of corresponding solution oscillates as $t \to \infty$.
Let mention how to construct the decaying external forces with the oscillating solutions more precisely.
We use the inductive argument for $k=0,1,2,\cdots$ and assume that there exists a solution $u_{\varepsilon}$ on $[0,T_k]$ with $\n{u_{\varepsilon}(T_k)}_{\dB_{p,q}^{n/p-1}} \leq 2^{-k}\eta^3$ with some constant $0<\eta\ll 1$.
We construct a solution $u_{\varepsilon}$ so that, on the interval $[T_k,T_k+T_*]$, 
the value of $\|u_{\varepsilon}(t)\|_{\dB_{p,q}^{n/p-1}}$ grows to the extent that it is bounded below by a constant 
independent of $k$; see Figure \ref{fig:2}. 
To achieve this, we choose the spatial profile of the external force $f$ on this interval 
so that its Fourier transform is a smooth function with the size $\eta$ and supported on a very high-frequency region like $\{\xi \in \mathbb{R}^3\ ;\ |\xi \pm ((2^{-k}\varepsilon)^{-p/(p-n)},0,0 )| \leq 1 \}$.
Then, since the derivative indices of $\dB_{p,q}^{n/p-3}(\mathbb{R}^3)$ and $\dB_{p,q}^{n/p-1}(\mathbb{R}^3)$ are negative due to $p>n$, we see that the critical norms of the external force and  linear part of the solution\footnote{The linear solution has the same Fourier support as that for the external force.} are bounded by $2^{-k}\varepsilon\eta$.
Considering the second iteration, obtained by inserting the linear solution into the nonlinear term, a portion of its Fourier support stay at the low frequency region like $\{\xi \in \mathbb{R}^3\ ;\ |\xi| \leq 1\}$, which we call ``high$\times$high$\to$low'' interaction; see Figure \ref{fig:high-high-to-low}.
Due to this nonlinear ``high$\times$high$\to$low'' frequency cascade, the norm of the solution, at time $t = T_k + T_*$, is bounded from below by \( c\eta^2 \), where $T_* \gg 1$ and $0<c \ll 1$ are suitable constant independent of $k$.
Note that this phenomenon essentially comes from the fact that the linear solution on $[T_k,T_k*T_*]$ provides a counter example of \eqref{key-bilin} with $p>n \geq 3$. 
%as an example that breaks the bilinear estimate \eqref{key-bilin} for Proposition \ref{prop:stab}. 
%Then, by taking $T_* \gg 1$ sufficiently large but independent of $k$, $\varepsilon$, and so on, 
%we find that the standard second iteration of $u$ is lifted from below 
%by a constant independent of both $k$ and $\varepsilon$ at time $t=T_k+T_*$. 
%Consequently, by following the standard ill-posedness method in T. \cite{Tsu-19} and the nonlinear estimate in $L^{\infty}(I;L^{n,\infty}(\mathbb{R}^n))$ (see Lemma \ref{lemm:Yamazaki}), 
%we may construct a nonlinear solution $u_{\varepsilon}$ that satisfies a lower bound analogous to that obtained for this second approximation. 
On the other hand, on $t \geq T_k+T_*$, we set $f_{\varepsilon}(t)=0$ for a while 
so that the solution $u_{\varepsilon}$ decays for $t \gg T_k+T_*$ due to the dissipation. 
Therefore, we take $T_{k+1}$ sufficiently large so that $\|u_{\varepsilon}(T_{k+1})\|_{\dB_{p,q}^{n/p-1}} \leq 2^{-(k+1)}\eta^3$;
see Figure \ref{fig:2}.
This argument is based on Proposition \ref{prop:nD} below.
Connecting the solutions on each interval, we obtain a global solution $u_{\varepsilon}$ with 
\begin{align}
    \n{u_{\varepsilon}(T_k)}_{\dB_{p,q}^{n/p-1}}
    \leq 
    2^{-k}\varepsilon\eta,
    \quad
    \n{u_{\varepsilon}(T_k+T_*)}_{\dB_{p,q}^{n/p-1}}
    \geq c\eta^2,
    \quad k=1,2,3,\dots,
\end{align}
which proves the desired result.
Since the $k$-th decreasing time $T_{k+1}-(T_k+T_*)$ is implicit\footnote{This is because we are not able to specify the decay rate of the non-forced Navier--Stokes flow in our setting.}, the decay rate of the external force is unclear. 
Note that our instability essentially relies on the ill-posedness argument via the breakdown of the bilinear estimates.
Thus, if we do not insist on the oscillatory behavior of the solution norm and focus only on showing that the solution does not converge to zero as time tends to infinity, the argument becomes much simpler; see Appendix~\ref{sec:non-osci} for details.
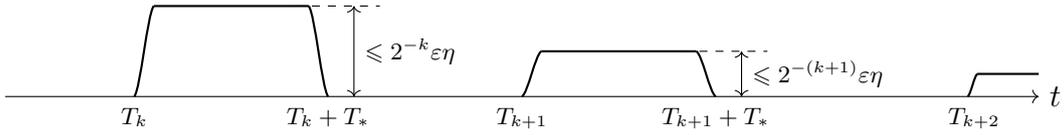
\begin{figure}[h]
    \centering
    
\begin{tikzpicture}[xscale=0.85, yscale=0.6]

% 軸
\draw[->] (0,0) -- (16,0) node[right] {$t$};

% ====== 1つ目のパルス ======
\draw[thick] (2,0) .. controls (2.1,0.2) and (2.2,1.8) .. (2.3,2);
\draw[thick] (2.3,2) -- (4.7,2);
\draw[thick] (4.7,2) .. controls (4.8,1.8) and (4.9,0.2) .. (5,0);

\draw[<->] (5.4,0) -- (5.4,2) node[midway,right] {\scriptsize$ \leq 2^{-k}\varepsilon\eta$};
\draw[dashed] (4.5,2) -- (5.8,2);

\node[below] at (2,0) {\scriptsize$T_k$};
\node[below] at (5,0) {\scriptsize$T_{k}+T_\ast$};

% ====== 2つ目のパルス ======
\draw[thick] (8,0) .. controls (8.1,0.1) and (8.2,0.9) .. (8.3,1);
\draw[thick] (8.3,1) -- (10.7,1);
\draw[thick] (10.7,1) .. controls (10.8,0.9) and (10.9,0.1) .. (11,0);

\draw[<->] (11.4,0) -- (11.4,1) node[midway,right] {\scriptsize$ \leq 2^{-(k+1)}\varepsilon\eta$};
\draw[dashed] (10.5,1) -- (11.8,1);

\node[below] at (8,0) {\scriptsize$T_{k+1}$};
\node[below] at (11,0) {\scriptsize$T_{k+1}+T_\ast$};

% ====== 3つ目のパルス (見切れ) ======
\node[below] at (15,0) {\scriptsize $T_{k+2}$};
\draw[thick] (14.9,0) .. controls (14.95,0.05) and (15.0,0.45) .. (15.05,0.5);
\draw[thick] (15.05,0.5) -- (16,0.5); % 半分の高さ (前は1なので0.5)
\end{tikzpicture}

    \caption{The behavior of $\|f_{\varepsilon}(t)\|_{\dB_{p,q}^{n/p-3}}$}
    \label{fig:1}
\end{figure}
\begin{figure}[h]
    \centering
    
\begin{tikzpicture}[xscale=0.85, yscale=0.6]

% Axes
\draw[->] (0,0) -- (16,0) node[right] {$t$};

% Parameters for valley heights
\def\valleyn{1.2}          % valley at T_k
\def\valleynext{0.6}       % valley at T_{k+1}
\def\valleynextnext{0.4}   % valley at T_{k+2}

% Curve: left -> valley -> peak -> valley -> peak -> valley -> hint of next rise
\draw[thick]
  (0.5,2.4) 
  .. controls (1.3,2.0) and (1.7,1.3) .. (2,\valleyn)                 % steeper descent to T_k
  .. controls (3,\valleyn) and (4,3.3) .. (5,4)                       % ascent to peak (T_k+T*)
  .. controls (6,4.3) and (7,1.0) .. (8,\valleynext)                  % descent to T_{k+1}
  .. controls (9,\valleynext) and (10,3.5) .. (11,4)                  % ascent to peak (T_{k+1}+T*)
  .. controls (12,4) and (12.7,1.0) .. (15,\valleynextnext)           % descent to T_{k+2}
  .. controls (15.3,0.4) and (15.6,0.8) .. (16,1.0);                  % hint of next rise (見切れ)

% Vertical dashed lines
\draw[dashed] (2,0) -- (2,\valleyn);
\draw[dashed] (5,0) -- (5,4);
\draw[dashed] (8,0) -- (8,\valleynext);
\draw[dashed] (11,0) -- (11,4);
\draw[dashed] (15,0) -- (15,\valleynextnext);

% Labels on t-axis
\node[below] at (2,0) {\scriptsize $T_k$};
\node[below] at (5,0) {\scriptsize $T_k+T_\ast$};
\node[below] at (8,0) {\scriptsize $T_{k+1}$};
\node[below] at (11,0) {\scriptsize $T_{k+1}+T_\ast$};
\node[below] at (15,0) {\scriptsize $T_{k+2}$};

% Vertical arrows
\draw[<->] (2,0) -- (2,\valleyn) node[midway,right] {\scriptsize $ \leq 2^{-k}\eta^3$};
\draw[<->] (5,0) -- (5,4) node[midway,right] {\scriptsize $\geq c\eta^2$};
\draw[<->] (8,0) -- (8,\valleynext) node[midway,right] {\scriptsize $\leq 2^{-(k+1)}\eta^3$};
\draw[<->] (11,0) -- (11,4) node[midway,right] {\scriptsize $\geq c\eta^2$};
\draw[<->] (15,0) -- (15,\valleynextnext);

\end{tikzpicture}
    \caption{The behavior of $\|u_{\varepsilon}(t)\|_{\dB_{p,q}^{n/p-1}}$}
    \label{fig:2}
\end{figure}

\begin{figure}[htbp]
  \centering
  \begin{tikzpicture}[scale=0.25, >=latex]
    % axes: thinner, y-axis shorter
    \draw[semithick, -latex] (-20,0) -- (20.5,0) node[right] {$\xi_1$};
    \draw[semithick, -latex] (0,-5.5) -- (0,5.5) node[above] {$\xi_2,\xi_3$};

    % shaded disks
    \filldraw[fill=gray!25, draw=black] (-15,0) circle (2); % left r=1
    \filldraw[fill=gray!25, draw=black] ( 15,0) circle (2); % right r=1
    \filldraw[fill=gray!35, draw=black] (  0,0) circle (2); % center r=2

    % curved arrows (slightly above x-axis, convex upward)
    \draw[thick, -latex]
      (-13,0.0) .. controls (-8,2) and (-5,2) .. (-1.99,0.20);
    \draw[thick, -latex]
      ( 13,0.0) .. controls ( 8,2) and ( 5,2) .. ( 1.99,0.20);
  \end{tikzpicture}
  \caption{The Fourier support of the ``high$\times$high$\to$low'' cascade}
  \label{fig:high-high-to-low}
\end{figure}
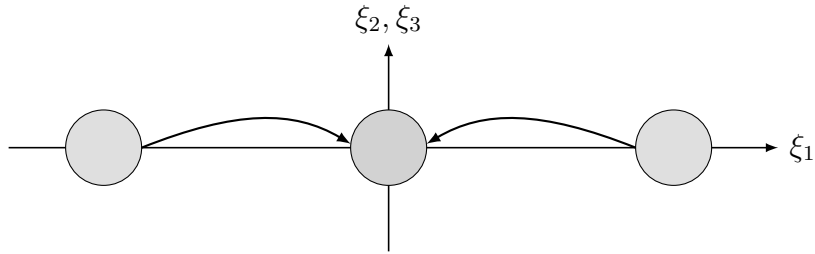

\subsection{Organization of the paper}
This paper is arranged as follows.
In Section \ref{sec:pre}, we recall the definitions and basic facts on the function spaces used in the entire of this manuscript.
In Section \ref{sec:nD}, we state a sharper result of Theorem \ref{thm:rough} in the case of $n \geq 3$ and provide its proof.
In Section \ref{sec:2D}, we focus on the case of $n = 2$ and complete the proof of our result.
Finally, we add a brief proof of Proposition \ref{prop:stab} in Appendix \ref{sec:pf_prop}, and also state an alternative instability result with a simple proof in Appendix \ref{sec:non-osci}.

Throughout this paper, we denote by $C$ and $c$ generic positive constants, which may differ in each line. In particular, $C=C(a_1,\dots,a_n)$ means that $C$ depends only on $a_1,\dots,a_n$. 
For any integrability exponent $1\leq p \leq \infty$, we denote by $p'$ the H\"{o}lder conjugate of $p$. 
For two Banach spaces $X$ and $Y$ with $X \cap Y \neq \varnothing$, 
we write $\| \cdot \|_{X \cap Y} := \| \cdot \|_X + \| \cdot \|_Y$.
%For $f=(f_1,\dots,f_n) \in X^n$, we set $\| f \|_{X}:= \| f_1\|_X + \dots + \| f_n \|_X$.

\section{Preliminaries}\label{sec:pre}
In this section, we summarize the notations, especially the definitions and basic properties of function spaces, and then prepare some useful lemmas in our calculus.
\subsection{Function spaces}
To state the definition of the Besov space, we first recall the Littlewood--Paley decomposition.
Let $\mathscr{S}(\mathbb{R}^n)$ be the space of Schwartz functions and
$\mathscr{S}'(\mathbb{R}^n)$ be the space of tempered distributions.
Let $\varphi_0 \in \mathscr{S}(\mathbb{R}^n)$ be such that
\begin{align}
    \supp \widehat{\varphi_0} \subset \Mp{ \xi \in \mathbb{R}^n\ ;\ 2^{-1} \leq | \xi | \leq 2 },\quad
    0 \leq \widehat{\varphi_0}(\xi) \leq 1, 
\end{align}
and define the family $\Mp{\varphi_j}_{j\in\mathbb{Z}}\subset \mathscr{S}(\mathbb{R}^n)$ as
\begin{align}
    \sum_{j \in \mathbb{Z}}
    \widehat{\varphi_j}(\xi) = 1 \qquad {\rm for\ all\ }\xi \in  \mathbb{R}^n \setminus \{0\},
\end{align}
where $\varphi_j(x):=2^{nj}\varphi_0(2^jx)$.
Using them, we define the frequency localized operators by
\begin{align}
    \Delta_jf:=\mathscr{F}^{-1}\lp{\widehat{\varphi_j}(\xi)\widehat{f}}, \qquad f \in \mathscr{S}'(\mathbb{R}^n),\ j \in \mathbb{Z}.
\end{align}
For $1 \leq p,q \leq \infty$, and $s \in \mathbb{R}$, the homogeneous Besov space $\dB_{p,q}^s(\mathbb{R}^n)$ is defined as 
\begin{align}
    \dB_{p,q}^s(\mathbb{R}^n)
    :={}&
    \Mp{
    f \in \mathscr{S}'(\mathbb{R}^n)/\mathscr{P}(\mathbb{R}^n)
    \ ; \ 
    \n{f}_{\dB_{p,q}^s(\mathbb{R}^n)}<\infty
    },\\
    \n{f}_{\dB_{p,q}^s(\mathbb{R}^n)}
    :={}&
    \n{
    \Mp{
    2^{sj}
    \n{\Delta_j f}_{L^p(\mathbb{R}^n)}
    }_{j \in \mathbb{Z}}
    }_{\ell^{q}(\mathbb{Z})},
\end{align}
where $\mathscr{P}(\mathbb{R}^n)$ is the space of polynomials.
As basic properties of the Besov space, there hold the following continuous embeddings:
\begin{align}
    &\dB^{s+n/{p_1}}_{p_1,q_1}(\mathbb{R}^n) \hookrightarrow \dB^{s+{n}/{p_2}}_{p_2,q_2}(\mathbb{R}^n),
    \qquad s\in\mathbb{R},\ 1\leq p_1\leq p_2\leq\infty,\ 1\leq q_1\leq q_2\leq \infty
\end{align}
and
\begin{align}
    \dB^{s}_{p,1}(\mathbb{R}^n)
    \hookrightarrow \dot{H}^s_p(\mathbb{R}^n)
    \hookrightarrow \dB^{s}_{p,\infty}(\mathbb{R}^n),
    \qquad s\in\mathbb{R},\ 1\leq p\leq \infty,
\end{align}
where
\begin{align}
    \dot{H}^s_p(\mathbb{R}^n)
    :=&
    \Mp{
    f \in \mathscr{S}'(\mathbb{R}^n)/\mathscr{P}(\mathbb{R}^n)
    \ ; \ 
    \n{f}_{\dot{H}_{p}^s(\mathbb{R}^n)}:=\n{(-\Delta)^{s/2}f}_{L^p(\mathbb{R}^n)}<\infty
    }
\end{align}
denotes the homogeneous Sobolev space, which satisfies $\dot{H}^0_p(\mathbb{R}^n)=L^p(\mathbb{R}^n)$ if $1 \leq p \leq \infty$.
In addition, regarding the boundedness of differential operators and the isomorphism of the Riesz potential, the following holds:
\begin{align}
&\max_{|\alpha|=k}\n{\nabla^{\alpha} f}_{\dB^s_{p,q}(\mathbb{R}^n)}
\sim 
\n{f}_{\dB^{s+k}_{p,q}(\mathbb{R}^n)},
\qquad
s\in\mathbb{R},\ k\in\mathbb{N},\  1\leq p,q\leq\infty,
\\
&\n{(-\Delta)^{\alpha/2}f}_{\dB^s_{p,q}(\mathbb{R}^n)} \sim \n{f}_{\dB^{s+\alpha}_{p,q}(\mathbb{R}^n)},
\qquad
s\in\mathbb{R},\ \alpha\in\mathbb{R},\ 1\leq p,q\leq \infty.
\end{align} 
The above and more properties of the Besov space, we refer to Sawano \cite{Saw-18} for example.

We also frequently use the weak Lebesgue space $L^{p,\infty}(\mathbb{R}^n)$ for $1 <p<\infty$ defined by 
\begin{align}
    &
    L^{p,\infty}(\mathbb{R}^n)
    :=
    \Mp{
    f:\mathbb{R}^n \to \mathbb{R}
    \ ;\ 
    f\text{\ is\ Lebesgue\ measurable\ and\ } 
    \n{f}_{L^{p,\infty}}
    <\infty
    },
    \\
    &
    \n{f}_{L^{p,\infty}}
    :=
    \sup_{\substack{E \in \mathcal{L}(\mathbb{R}^n),\\ |E|<\infty}}
    \frac{1}{|E|^{1-{1}/{p}}}
    \int_E |f(x)| dx,
\end{align}
where $\mathcal{L}(\mathbb{R})$ denotes the set of all Lebesgue measurable subset of $\mathbb{R}^n$, and $|E|$ denotes the $n$-dimensional Lebesgue measure of $E \in \mathcal{L}(\mathbb{R}^n)$.
It is known that the following relationship with the weak  Lebesgue space hold:
\begin{align}
    \dB_{p,\infty}^{n/p-n/q}(\mathbb{R}^n) \hookrightarrow L^{q,\infty}(\mathbb{R}^n) \hookrightarrow \dB^{n/r-n/q}_{r,\infty}(\mathbb{R}^n),
    \qquad 1 \leq p < q < r \leq \infty.
\end{align}
See Cunanan--Okabe--Tsutsui \cite{Cun-Oka-Tsu-22} for the proof.

Moreover, for $1 \leq p,q,r \leq \infty$, $s \in \mathbb{R}$, and a time interval $I \subset \mathbb{R}$, we define 
the Chemin--Lerner space $\widetilde{L^r}(I;\dB_{p,q}^s(\mathbb{R}^n))$ by 
\begin{align}
    \widetilde{L^r}(I;\dB_{p,q}^s(\mathbb{R}^n))
    :={}&
    \Mp{
    F:I \to  \mathscr{S}'(\mathbb{R}^n)/\mathscr{P}(\mathbb{R}^n)
    \ ; \ 
    \n{F}_{\widetilde{L^r}(I;\dB_{p,q}^s(\mathbb{R}^n))}
    <\infty
    },\\
    \n{F}_{\widetilde{L^r}(I;\dB_{p,q}^s(\mathbb{R}^n))}
    :={}&
    \n{
    \Mp{
    2^{sj}
    \n{\Delta_j F}_{L^r(I;L^p(\mathbb{R}^n))}
    }_{j \in \mathbb{Z}}
    }_{\ell^{q}(\mathbb{Z})}.
\end{align}
We also use the following notation
\begin{align}
    \widetilde{C}(I ; \dB_{p,q}^s(\mathbb{R}^n))
    :=
    C(I ; \dB_{p,\sigma}^s(\mathbb{R}^n))
    \cap
    \widetilde{L^{\infty}}(I ; \dB_{p,q}^s(\mathbb{R}^n)).
\end{align}
As for the relationship with the Bochner space $L^r(I;\dB^{s}_{p,q}(\mathbb{R}^n))$,
there holds the following continuous embeddings:
\begin{align}
    &
    \widetilde{L^r}(I; \dB_{p,q}^s(\mathbb{R}^n))
    \hookrightarrow
    {L^r}(I; \dB_{p,q}^s(\mathbb{R}^n)),
    \qquad {\rm if\ } 1\leq q \leq r \leq\infty,\\
    &
    {L^r}(I; \dB_{p,q}^s(\mathbb{R}^n)) 
    \hookrightarrow
    \widetilde{L^r}(I; \dB_{p,q}^s(\mathbb{R}^n)),
    \qquad {\rm if\ } 1\leq r \leq q\leq \infty.
\end{align}
For more properties of the Chimin--Lerner space, we refer to Bahouri--Chemin--Danchin \cite{Bah-Che-Dan-11}.

\subsection{Practical lemmas}

As for the linear term, we use the following maximal regularity estimates:
\begin{lemm}[\cite{Bah-Che-Dan-11}]
\label{lemm:linear}
There exists a positive constant $C$ such that for any $-\infty < t_0<T\leq\infty$, $1\leq p,q \leq \infty$, $1\leq r \leq \rho \leq \infty$, and $s \in \mathbb{R}$, there hold
\begin{align}
    \left\| e^{(t-t_0)\Delta}a\right\|_{\widetilde{L^r}(t_0,T;\dB_{p,q}^{s+{2}/{r}})}
    &\leq C \|a\|_{\dB_{p,q}^s},\\
    \left\| \int_{t_0}^t e^{(t-\tau)\Delta} f(\tau) d\tau \right\|_{\widetilde{L^{\rho}}(t_0,T;\dB_{p,q}^{s+{2}/{\rho}})}
    &\leq C\|f\|_{\widetilde{L^{r}}(t_0,T;\dB_{p,q}^{s-2+{2}/{r}})}
\end{align}
for all $a\in\dB_{p,q}^s(\mathbb{R}^n)$ and $f\in\widetilde{L^{r}}(t_0,T;\dB_{p,q}^{s-2+{2}/{r}}(\mathbb{R}^n))$.
\end{lemm}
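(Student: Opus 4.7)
The plan is to reduce both estimates to the standard dyadic-block bound
\[
\bigl\| e^{t\Delta}\Delta_j g\bigr\|_{L^p(\mathbb{R}^n)} \leq C e^{-ct\, 2^{2j}}\|\Delta_j g\|_{L^p(\mathbb{R}^n)},\qquad t>0,\ j\in\mathbb{Z},
\]
which follows from the frequency-localization of $\Delta_j g$ in the annulus $|\xi|\sim 2^j$ and the fact that the convolution kernel of $e^{t\Delta}\Delta_j$ has $L^1$-norm controlled by $C e^{-ct 2^{2j}}$ (a standard Bernstein-type computation using the smoothness and support of $\widehat{\varphi_j}$). Once this pointwise-in-time block estimate is in hand, everything else is a combination of scalar $L^r_t$ integration and $\ell^q_j$ summation in the order dictated by the Chemin--Lerner norm.

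For the first inequality I would take $L^r(t_0,T)$ in time of the block bound applied with $g=a$ to get
\[
\bigl\|\Delta_j e^{(t-t_0)\Delta}a\bigr\|_{L^r(t_0,T;L^p)} \leq C \bigl\|e^{-c(\cdot)2^{2j}}\bigr\|_{L^r(0,\infty)}\|\Delta_j a\|_{L^p} \leq C\, 2^{-2j/r}\|\Delta_j a\|_{L^p}.
\]
Multiplying by $2^{(s+2/r)j}$ and taking $\ell^q(\mathbb{Z})$ yields exactly $\|a\|_{\dB_{p,q}^s}$ on the right-hand side, which is the claim. For the second inequality I would first apply $\Delta_j$ under the integral, then use
\[
\|\Delta_j e^{(t-\tau)\Delta}f(\tau)\|_{L^p}\leq C\, e^{-c(t-\tau)2^{2j}}\|\Delta_j f(\tau)\|_{L^p},
\]
and estimate the resulting time convolution by Young's inequality with exponents $1+1/\rho=1/\alpha+1/r$, i.e.\ $1/\alpha=1-1/r+1/\rho$ (valid because $r\leq\rho$):
\[
\bigl\|\Delta_j\!\int_{t_0}^t\! e^{(t-\tau)\Delta}f(\tau)d\tau\bigr\|_{L^\rho(t_0,T;L^p)}\leq C\bigl\|e^{-c(\cdot)2^{2j}}\bigr\|_{L^\alpha(0,\infty)}\|\Delta_j f\|_{L^r(t_0,T;L^p)}.
\]
Computing $\|e^{-c(\cdot)2^{2j}}\|_{L^\alpha}\sim 2^{-2j/\alpha}=2^{-2j(1-1/r+1/\rho)}$, multiplying by $2^{(s+2/\rho)j}$, and observing that the $j$-dependent factors combine to $2^{(s-2+2/r)j}$, the $\ell^q_j$ norm then reproduces $\|f\|_{\widetilde{L^r}(t_0,T;\dB_{p,q}^{s-2+2/r})}$.

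The only delicate points are bookkeeping: one must verify that $1+1/\rho=1/\alpha+1/r$ admits $\alpha\in[1,\infty]$ under the hypothesis $r\leq\rho$ (and take the limiting cases $r=1$ or $\rho=\infty$ separately, where $\alpha$ degenerates), and that the exponent arithmetic $s+2/\rho-2/\alpha=s-2+2/r$ really collapses to the stated regularity on the right. Everything else is a black-box use of scalar Young and $\ell^q$ Minkowski. I do not anticipate a real obstacle; the proof is essentially the same as the classical Fourier-multiplier argument for heat maximal regularity, repackaged so that the $\ell^q$ summation in $j$ is performed \emph{after} the time integration, which is exactly what turns $L^r_tB^s_{p,q}$ into $\widetilde{L^r}_tB^s_{p,q}$.
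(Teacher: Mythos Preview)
Your proof is correct and is precisely the standard argument from \cite{Bah-Che-Dan-11} (to which the paper simply defers without giving its own proof): localize with $\Delta_j$, use the exponential decay $\|e^{t\Delta}\Delta_j g\|_{L^p}\leq Ce^{-ct2^{2j}}\|\Delta_j g\|_{L^p}$, integrate in time (directly for the homogeneous term, via Young's convolution inequality with $1/\alpha=1-1/r+1/\rho$ for the Duhamel term), and then take the $\ell^q_j$ sum. The exponent bookkeeping you wrote out is correct, and the condition $r\leq\rho$ is exactly what makes $\alpha\in[1,\infty]$.
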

We remark here that such estimates are specific to the Chemin--Lerner space, and are hard to obtain in the Bochner space $L^r(0,T;\dB^s_{p,q}(\mathbb{R}^n))$. 

For the estimates of the nonlinear term, we use the Littlewood-Paley decomposition
\begin{align}
    fg = T_fg + R(f,g) + T_gf,
\end{align}
where we define 
\begin{align}
    T_fg := 
    \sum_{k \in \mathbb{Z}} 
    \left( \sum_{\ell \leq k-3} \Delta_{\ell} f \right) 
    \Delta_k g,
    \qquad
    R(f,g) :=
    \sum_{k \in \mathbb{Z}}
    \sum_{|k-\ell|\leq 2}
    \Delta_kf 
    \Delta_{\ell}g.
\end{align}
Actually, we have the following estimates for $T_fg$ and $R(f,g)$ in the Chemin--Lerner space.
\begin{lemm}\label{lemm:nonlin}
Let $n \in \mathbb{N}$ and let $I \subset \mathbb{R}$ be an interval.
Then, the following statements hold:
\begin{enumerate}
    \item 
    Let $1 \leq p,p_1,p_2,q,q_1,q_2,r,r_1,r_2 \leq \infty$ and $s,s_1,s_2 \in \mathbb{R}$ satisfy
    \begin{align}
        \frac{1}{p} = \frac{1}{p_1} + \frac{1}{p_2},\quad
        \frac{1}{q} \leq \frac{1}{q_1} + \frac{1}{q_2},\quad
        \frac{1}{r} = \frac{1}{r_1} + \frac{1}{r_2},\quad
        s=s_1+s_2, \quad 
        s_1 < 0.
    \end{align}
    Then, there exists a positive constant $C=C(q_1,s_1,s_2)$ such that 
    \begin{align}\label{est:T-2}
        \n{T_fg}_{\widetilde{L^r}(I;\dB_{p,q}^s(\mathbb{R}^n))}
        \leq
        C
        \n{f}_{\widetilde{L^{r_1}}(I;\dB_{p_1,q_1}^{s_1}(\mathbb{R}^n))}
        \n{g}_{\widetilde{L^{r_2}}(I;\dB_{p_2,q}^{s_2}(\mathbb{R}^n))}
    \end{align}
    for all 
    $f \in \widetilde{L^{r_1}}(I;\dB_{p_1,q_1}^{s_1}(\mathbb{R}^n))$ 
    and 
    $g \in \widetilde{L^{r_2}}(I;\dB_{p_2,q}^{s_2}(\mathbb{R}^n))$.
    \item 
    Let $1 \leq p,p_1,p_2,q,q_1,q_2,r,r_1,r_2 \leq \infty$ and $s,s_1,s_2 \in \mathbb{R}$ satisfy
    \begin{align}
        \frac{1}{p} = \frac{1}{p_1} + \frac{1}{p_2},\quad
        \frac{1}{q} \leq \frac{1}{q_1} + \frac{1}{q_2},\quad
        \frac{1}{r} = \frac{1}{r_1} + \frac{1}{r_2},\quad 
        s=s_1+s_2>0.
    \end{align}
    Then, there exists a positive constant $C=C(s, s_1,s_2)$ such that 
    \begin{align}\label{est:R-2}
        \n{R(f,g)}_{\widetilde{L^r}(I;\dB_{p,q}^s(\mathbb{R}^n))}
        \leq
        C
        \n{f}_{\widetilde{L^{r_1}}(I;\dB_{p_1,q_1}^{s_1}(\mathbb{R}^n))}
        \n{g}_{\widetilde{L^{r_2}}(I;\dB_{p_2,q_2}^{s_2}(\mathbb{R}^n))}
    \end{align}
    for all 
    $f \in \widetilde{L^{r_1}}(I;\dB_{p_1,q_1}^{s_1}(\mathbb{R}^n))$ 
    and 
    $g \in \widetilde{L^{r_2}}(I;\dB_{p_2,q_2}^{s_2}(\mathbb{R}^n))$.
\end{enumerate}
\end{lemm}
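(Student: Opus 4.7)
The plan would be to adapt the standard proof of Bony's paraproduct and remainder estimates for homogeneous Besov spaces (as in \cite{Bah-Che-Dan-11}) to the Chemin--Lerner setting. The only genuinely new ingredient is a Minkowski-type exchange between the time integration and the $\ell^{q}$-summation over dyadic frequencies, which is precisely the point of the tilde spaces.

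For (1), I would first use the spectral localization
$\supp \widehat{S_{k-3}f \cdot \Delta_k g} \subset \{2^{k-2} \leq |\xi| \leq 2^{k+2}\}$,
so that $\Delta_j T_f g = \sum_{|k-j|\leq N_0} \Delta_j(S_{k-3}f \cdot \Delta_k g)$ for some universal $N_0$. Applying H\"{o}lder in space ($1/p = 1/p_1 + 1/p_2$) and in time ($1/r = 1/r_1 + 1/r_2$), then bounding $S_{k-3}f$ by the triangle inequality and a H\"{o}lder sum against $2^{-s_1 \ell}$, I would arrive at
\begin{align}
\n{S_{k-3}f}_{L^{r_1}(I;L^{p_1})}
\leq C\, 2^{-s_1 k}\n{f}_{\widetilde{L^{r_1}}(I;\dB_{p_1,q_1}^{s_1})},
\end{align}
where the geometric summation over $\ell \leq k-3$ converges precisely because $s_1 < 0$. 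Multiplying by $2^{sj}$, using $s = s_1 + s_2$, and then taking $\ell^{q}$ in $j$ via Young's inequality $\ell^{q}\ast \ell^{1} \hookrightarrow \ell^{q}$ against the compactly supported kernel $\mathbf{1}_{|k-j|\leq N_0}$ completes the estimate.

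For (2), the frequency analysis differs: since $\supp \widehat{\Delta_k f \cdot \Delta_\ell g}$ is contained in a ball of radius $\lesssim 2^{k}$ when $|k-\ell|\leq 2$, only the indices $k \geq j - C$ contribute to $\Delta_j R(f,g)$. H\"{o}lder in space and time then gives
\begin{align}
2^{sj} \n{\Delta_j R(f,g)}_{L^{r}(I;L^{p})}
\leq
C \sum_{k \geq j-C}\sum_{|k-\ell|\leq 2} 2^{s(j-k)} \sp{2^{s_1 k}\n{\Delta_k f}_{L^{r_1}(I;L^{p_1})}}\sp{2^{s_2 \ell}\n{\Delta_\ell g}_{L^{r_2}(I;L^{p_2})}}.
\end{align}
Because $s > 0$, the kernel $\mathbf{1}_{\{k-j \geq -C\}}\, 2^{s(j-k)}$ is absolutely summable in $k-j$, so Young's inequality on $\mathbb{Z}$ absorbs the convolution; one further H\"{o}lder in $\ell^{q}$ using $1/q \leq 1/q_1 + 1/q_2$ then yields the claim.

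The main obstacle will be purely bookkeeping: one must cleanly exchange the order of time integration and dyadic summation (which is exactly what the Chemin--Lerner norm is designed to allow), keep track of the many index conditions simultaneously, and verify that the low-frequency geometric sum in (1) and the high-frequency geometric sum in (2) converge, which is precisely where the sign hypotheses $s_1 < 0$ and $s > 0$ enter; without these the argument would break down at the corresponding endpoint indices.
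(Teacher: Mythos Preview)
Your proposal is correct and follows precisely the standard paraproduct/remainder argument from \cite{Bah-Che-Dan-11}*{Chapter 2}, which is exactly what the paper itself refers to for the proof. The frequency localization, H\"older-in-space-and-time, and geometric summation steps you outline (with $s_1<0$ controlling the low-frequency sum in (1) and $s>0$ controlling the high-frequency sum in (2)) are the same as in that reference, merely carried out with the time norm taken before the $\ell^q$ sum as the Chemin--Lerner framework requires.
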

See \cite{Bah-Che-Dan-11}*{Chapter 2} for the method of the proof.

Using Lemmas \ref{lemm:linear} and \ref{lemm:nonlin}, we can show the following key estimates for the nonlinear term.
\begin{lemm}\label{lemm:nonlin-ndim}
Let $n \geq 2$ be an integer.
Let $1 \leq p,p_1,q_1,q_2,q_3, r_1,r_2,r_3 \leq \infty$ satisfy
\begin{gather}
    \frac{1}{r_1}  \leq \frac{1}{r_2} +\frac{1}{r_3} \leq 1,\qquad
    r_2,r_3>2, \qquad 
    \frac{1}{q_1} \leq \frac{1}{q_2} + \frac{1}{q_3},
    \\
    n\sp{\frac{1}{p}-\frac{1}{p_1}}
    < 1 - \frac{2}{r_3},
    \qquad
    \min
    \Mp{n,n\sp{\frac{1}{p}+\frac{1}{p_1}}}-2+\frac{1}{r_2}+\frac{1}{r_3}>0.
    \label{assump:pqr}
\end{gather}
Then, there exists a positive constant $C=C(n,p,p_1,q_1,q_2,q_3,r_1,r_2,r_3)$ such that
\begin{align}
    &\n{
    \int_{t_0}^t
    e^{(t-\tau)\Delta}
    \mathbb{P}\div(u \otimes v)(\tau)d\tau
    }_{\widetilde{L^{r_1}}(I;\dB_{p,q_1}^{n/p-1+2/r_1})}\\
    &\quad\leq
    C
    \n{u}_{\widetilde{L^{r_2}}(I;\dB_{p_1,q_2}^{n/p_1-1+2/r_2})}
    \n{v}_{\widetilde{L^{r_3}}(I;\dB_{p,q_3}^{n/p-1+2/r_3})}
\end{align}
for all 
$I=(t_0,t_1) \subset \mathbb{R}$, 
$u \in {L^{\infty}}(I;\dB_{p_1,q_2}^{n/p_1-1}(\mathbb{R}^n))$, 
and 
$v \in \widetilde{L^r}(I;\dB_{p,q_3}^{n/p-1+2/r}(\mathbb{R}^n))$.
\end{lemm}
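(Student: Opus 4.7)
The plan is to reduce the estimate via Lemma~\ref{lemm:linear} to a bilinear Chemin--Lerner bound on $u \otimes v$ at the scaling level $\dB^{n/p - 2 + 2/r}_{p, q_1}$, and then to apply Bony's decomposition $u \otimes v = T_u v + R(u,v) + T_v u$ componentwise, estimating each piece via Lemma~\ref{lemm:nonlin} together with Sobolev--Bernstein embeddings.

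Concretely, I would set $1/r := 1/r_2 + 1/r_3$; by the hypotheses $1 \leq r \leq r_1$. The second estimate of Lemma~\ref{lemm:linear} with $\rho = r_1$ and $s = n/p - 1$, combined with the fact that $\mathbb{P}\div$ is an operator of order one bounded between homogeneous Besov spaces at fixed third index, reduces the target inequality to
\[
\n{u \otimes v}_{\widetilde{L^r}(I; \dB^{n/p-2+2/r}_{p, q_1})}
\lesssim
\n{u}_{\widetilde{L^{r_2}}(I; \dB^{n/p_1-1+2/r_2}_{p_1, q_2})}
\n{v}_{\widetilde{L^{r_3}}(I; \dB^{n/p-1+2/r_3}_{p, q_3})}.
\]

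For $T_u v$, I would use the Sobolev embedding $\dB^{n/p_1-1+2/r_2}_{p_1,q_2} \hookrightarrow \dB^{2/r_2-1}_{\infty,q_2}$, whose target regularity $2/r_2 - 1$ is negative thanks to $r_2 > 2$; applying \eqref{est:T-2} with the $L^\infty$-based Besov on $u$ then yields exactly the target regularity $n/p - 2 + 2/r$ in the $\dB_p$-scale. The term $T_v u$ is treated symmetrically when $p_1 \leq p$ (Sobolev-embed $v$, using $r_3 > 2$); when $p_1 > p$, I would instead argue at the Littlewood--Paley level via Hölder with intermediate exponent $1/p^* = 1/p - 1/p_1$, using Bernstein's inequality to dominate $\|S_{k-3} v\|_{L^{p^*}}$ by a dyadic sum whose convergence is ensured precisely by the hypothesis $n(1/p - 1/p_1) < 1 - 2/r_3$. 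For $R(u,v)$, applying \eqref{est:R-2} gives a bound in $\dB^{n(1/p+1/p_1)-2+2/r}_{p_R,q_0}$ with $1/p_R = 1/p + 1/p_1$ and $1/q_0 = 1/q_2 + 1/q_3$, followed by Sobolev embedding into the target; when $1/p + 1/p_1 > 1$ so that $p_R < 1$ is inadmissible, an auxiliary Bernstein--Hölder detour through $L^1$ replaces the direct application, giving the condition $n - 2 + 1/r_2 + 1/r_3 > 0$. The two cases together reconstitute the $\min\{n, n(1/p + 1/p_1)\}$ of the positivity hypothesis. At every step, $1/q_1 \leq 1/q_2 + 1/q_3$ allows one to pass from the natural output third index $q_0$ to $q_1$ via the embedding $\ell^{q_0} \hookrightarrow \ell^{q_1}$.

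I expect the main obstacle to lie in the two case splits---whether $p_1 > p$ in $T_v u$, and whether $1/p + 1/p_1 > 1$ in $R(u,v)$---where Lemma~\ref{lemm:nonlin} does not apply off-the-shelf and the estimates must be reconstructed at the Littlewood--Paley level via Young-type convolutions in the dyadic index combined with Bernstein's inequality. Once these two cases are dispatched, the Chemin--Lerner time integrability and third-index bookkeeping follow from standard Hölder-type arguments.
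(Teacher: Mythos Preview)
Your proposal is correct and follows essentially the same route as the paper: reduce by Lemma~\ref{lemm:linear} to a product estimate at regularity $n/p-2+2/r$, apply Bony's decomposition, and invoke Lemma~\ref{lemm:nonlin} for each piece, with the same case splits $p \lessgtr p_1$ for $T_v u$ and $1/p+1/p_1 \lessgtr 1$ for $R(u,v)$. The only cosmetic difference is that where you anticipate by-hand Littlewood--Paley work in the harder sub-cases, the paper stays entirely within Lemma~\ref{lemm:nonlin} by first applying a Sobolev embedding to one of the inputs (e.g.\ $v$ into $\dB_{p_2,q_3}^{n/p_2-1+2/r_3}$ with $1/p_2=1/p-1/p_1$ when $p\leq p_1$, or $u$ into $\dB_{p',q_2}^{n/p'-1+2/r_2}$ when $1/p+1/p_1\geq 1$), so no dyadic computation needs to be redone.
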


\begin{proof}
	Let us first decompose the Duhamel integral as 
	\begin{align}
		&
		\int_{t_0}^t
		e^{(t-\tau)\Delta}
		\mathbb{P}\div(u \otimes v)(\tau)d\tau
		\\
		&\quad
		={}
		\int_{t_0}^t
		e^{(t-\tau)\Delta}
		\mathbb{P}\div \sp{
			T^{\otimes}_{u} v
			+ 
			\widetilde{T}^{\otimes}_{v} u
			+
			R^{\otimes}(u,v)}(\tau)d\tau
		\\
		&\quad 
		=:{}
		D_1[u,v](t) + D_2[u,v](t) + D_3[u,v](t),
	\end{align}
    where we have set 
    \begin{align}
        &
        T_u^{\otimes}v:=
        \sum_{j \in \mathbb{Z}}
        \sum_{k \leq j-3}\Delta_ku \otimes \Delta_jv,
        \quad 
        \widetilde{T}_v^{\otimes}u=\sp{T_v^{\otimes}u}^{\top}
        ,\\
        &
        R^{\otimes}(u,v):=
        \sum_{j \in \mathbb{Z}}
        \sum_{|k - j| \leq 2}
        \Delta_ku \otimes \Delta_jv.
    \end{align}
    In what follows, we make use of Lemmas \ref{lemm:linear} and \ref{lemm:nonlin} to proceed the estimates.
	Let $r$ satisfies $1/r=1/r_2+1/r_3$.
	As for the estimate of $D_1[u,v]$,
	we have  
	\begin{align}
		\n{D_1[u,v]}_{\widetilde{L^{r_1}}(I;\dB_{p,q_1}^{n/p-1+2/r_1})}
		& \leq 
		C
		\n{T^{\otimes}_{u} v}_{\widetilde{L^{r}}(I;\dB_{p,q_1}^{n/p-2+{2}/{r}})}\\
		& \leq 
		C
		\n{u}_{\widetilde{L^{r_2}}(I;\dB_{\infty,q_2}^{-1+2/r_2})}
		\n{v}_{\widetilde{L^{r_3}}(I;\dB_{p,q_3}^{n/p-1+2/r_3})}\\ %  -1+2/r_2<0
		& \leq 
		C
		\n{u}_{\widetilde{L^{r_2}}(I;\dB_{p_1,q_2}^{n/p_1-1+2/r_2})}
		\n{v}_{\widetilde{L^{r_3}}(I;\dB_{p,q_3}^{n/p-1+2/r_3})}.
	\end{align}
	Next we consider the estimate of $D_2[u,v]$.
	For the case of $p\leq p_1$, 
	setting $p \leq p_2 \leq \infty$ by $1/p_2=1/p-1/p_1$,
	and we see that 
	\begin{align}
		\n{D_2[u,v]}_{\widetilde{L^{r_1}}(I;\dB_{p,q_1}^{n/p-1+2/r_1})}
		& \leq 
		C
		\n{T^{\otimes}_{v} u}_{\widetilde{L^{r}}(I;\dB_{p,q_3}^{n/p-2+{2}/{r}})}\\
		& \leq 
		C
		\n{v}_{\widetilde{L^{r_3}}(I;\dB_{p_2,q_3}^{n/p_2-1+2/r_3})}
		\n{u}_{\widetilde{L^{r_2}}(I;\dB_{p_1,q_2}^{n/p_1-1+2/r_2})}\\ %  n/p_2-1+2/r_3 = n(1/p-1/p_1)-1+2/r_3<0
		& \leq 
		C
		\n{u}_{\widetilde{L^{r_2}}(I;\dB_{p_1,q_2}^{n/p_1-1+2/r_2})}
		\n{v}_{\widetilde{L^{r_3}}(I;\dB_{p,q_3}^{n/p-1+2/r_3})}.
	\end{align}
	For the  case of $p_1 \leq p$, it holds
	\begin{align}
		\n{D_2[u,v]}_{\widetilde{L^{r_1}}(I;\dB_{p,q_1}^{n/p-1+2/r_1})}
		& \leq 
		C
		\n{T^{\otimes}_{v} u}_{\widetilde{L^{r}}(I;\dB_{p,q_1}^{n/p-2+{2}/{r}})}\\
		& \leq 
		C
		\n{T^{\otimes}_{v} u}_{\widetilde{L^{r}}(I;\dB_{p_1,q_1}^{n/p_1-2+{2}/{r}})}\\
		& \leq 
		C
		\n{v}_{\widetilde{L^{r_3}}(I;\dB_{\infty,q_3}^{-1+2/r_3})}
		\n{u}_{\widetilde{L^{r_2}}(I;\dB_{p_1,q_2}^{n/p_1-1+2/r_2})}\\ %  -1+2/r_3<0
		& \leq 
		C
		\n{u}_{\widetilde{L^{r_2}}(I;\dB_{p_1,q_2}^{n/p_1-1+2/r_2})}
		\n{v}_{\widetilde{L^{r_3}}(I;\dB_{p,q_3}^{n/p-1+2/r_3})}.
	\end{align}
	Finally we consider the estimate of $D_3[u,v]$.
	In the case of $p_1 \leq p'$, that is, $1/p + 1/p_1 \geq 1$,
	we see that
	\begin{align}
		\n{D_3[u,v]}_{\widetilde{L^{r_1}}(I;\dB_{p,q_1}^{n/p-1+2/r_1})}
		& \leq 
		C
		\n{R^{\otimes}(u,v)}_{\widetilde{L^{r}}(I;\dB_{p,q_1}^{n/p-2+{2}/{r}})}\\
		& \leq 
		C
		\n{R^{\otimes}(u,v)}_{\widetilde{L^{r}}(I;\dB_{1,q}^{n-3+{2}/{r}})}\\
		& \leq 
		C
		\n{u}_{\widetilde{L^{r_2}}(I;\dB_{p',q_2}^{n/p'-1+2/r_2})}
		\n{v}_{\widetilde{L^{r_3}}(I;\dB_{p,q_3}^{n/p-1+2/r_3})}\\ % n-2+\frac{2}{r} > 0
		& \leq 
		C
		\n{u}_{\widetilde{L^{r_2}}(I;\dB_{p_1,q_2}^{n/p_1-1+2/r_2})}
		\n{v}_{\widetilde{L^{r_3}}(I;\dB_{p,q_3}^{n/p-1+2/r_3})}.
	\end{align}
	For the case of $p_1 \geq p'$ that is $1/p + 1/p_1 \leq 1$, we define $p_3$ by $1/p_3=1/p + 1/p_1$ and see by $p_3 \leq p$ that 
	\begin{align}
		\n{D_3[u,v]}_{\widetilde{L^{r_1}}(I;\dB_{p,q_1}^{n/p-1+2/r_1})}
        & \leq 
		C
		\n{R^{\otimes}(u,v)}_{\widetilde{L^{r}}(I;\dB_{p,q_1}^{n/p-2+{2}/{r}})}\\
		& \leq 
		C
		\n{R^{\otimes}(u,v)}_{\widetilde{L^{r}}(I;\dB_{p_3,q_1}^{n/{p_3}-2+{2}/{r}})}\\
		& \leq 
		C
		\n{u}_{\widetilde{L^{r_2}}(I;\dB_{p_1,q_2}^{n/p_1-1+2/r_2})}
		\n{v}_{\widetilde{L^{r_3}}(I;\dB_{p,q_3}^{n/p-1+2/r_3})}.  % \frac{n}{p_3}-2+\frac{2}{r} > 0
	\end{align}
	Thus, we complete the proof.
\end{proof}
Finally, we prepare the estimate in the Besov space for a mollified trigonometric function frequently used in the following sections. 
\begin{lemm}\label{lemm:cos}
Let $n\geq 1$, $s\in\mathbb{R}$, $1\leq p\leq\infty$, $R>0$, and $\varphi \in \mathscr{S}(\mathbb{R}^n)$ such that $\supp \widehat{\varphi}\subset \{ \xi \in \mathbb{R}^n\ ;\ |\xi| \leq 1 \}$.
Let us define
\begin{equation*}
W_R(x):= \varphi(x)\cos(R x_1),\qquad x\in\mathbb{R}^n.
\end{equation*}
Then there exist positive constants $r$ and $C=C(p, \varphi)$ such that if $R>r$, it holds
\begin{equation*}
\|W_R\|_{\dot B^{s}_{p,1}(\mathbb{R}^n)} \leq C R^{s}.
\end{equation*}
\end{lemm}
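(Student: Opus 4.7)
The plan is to exploit the fact that multiplying by $\cos(Rx_1)$ shifts the Fourier support of $\varphi$ by $\pm Re_1$, so that $\widehat{W_R}$ lives in a small region of frequencies of size $\sim R$; only finitely many Littlewood--Paley blocks of $W_R$ are therefore non-zero, and each of them can be bounded by an $R$-independent constant via the $L^p$ norm of $\varphi$.

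First I would write
\begin{equation}
\widehat{W_R}(\xi) = \tfrac{1}{2}\bigl(\widehat{\varphi}(\xi - Re_1) + \widehat{\varphi}(\xi + Re_1)\bigr),
\end{equation}
and use the hypothesis $\supp\widehat{\varphi}\subset\{|\xi|\leq 1\}$ to deduce
\begin{equation}
\supp\widehat{W_R} \subset \overline{B(Re_1,1)} \cup \overline{B(-Re_1,1)} \subset \{\xi\in\mathbb{R}^n : R-1\leq|\xi|\leq R+1\}.
\end{equation}
Next I would fix $r:=4$; then for any $R>r$ the above support is contained in the annulus $\{R/2<|\xi|<2R\}$. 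Choosing $j_0=j_0(R)\in\mathbb{Z}$ with $2^{j_0}\leq R<2^{j_0+1}$, the support condition together with $\supp\widehat{\varphi_j}\subset\{2^{j-1}\leq|\xi|\leq 2^{j+1}\}$ forces $\Delta_j W_R\equiv 0$ unless $j\in J_0:=\{j_0-2,j_0-1,j_0,j_0+1,j_0+2\}$ (or a similarly small $R$-independent set of indices).

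For $j\in J_0$, I would estimate $\Delta_j W_R=\varphi_j*W_R$ in $L^p$ by Young's inequality, obtaining
\begin{equation}
\|\Delta_j W_R\|_{L^p}\leq\|\varphi_j\|_{L^1}\|W_R\|_{L^p}=\|\varphi_0\|_{L^1}\|W_R\|_{L^p}\leq\|\varphi_0\|_{L^1}\|\varphi\|_{L^p},
\end{equation}
a bound independent of $R$. Summing the defining series gives
\begin{equation}
\|W_R\|_{\dot{B}^s_{p,1}}=\sum_{j\in J_0}2^{sj}\|\Delta_j W_R\|_{L^p}\leq C(p,\varphi)\sum_{j\in J_0}2^{sj}\leq C(p,\varphi) R^s,
\end{equation}
since each $2^{sj}$ with $j\in J_0$ is comparable to $R^s$ with constants depending only on $s$.

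There is no serious obstacle: the only point requiring mild care is to pick $r$ large enough that the two Fourier-shifted balls are genuinely separated from the origin, so that they meet only a bounded number of dyadic annuli; once that is done, the rest is Young's inequality and a finite sum. The argument is robust and in particular does not require any cancellation between the two pieces $\widehat{\varphi}(\xi\pm Re_1)$.
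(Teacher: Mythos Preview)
Your proof is correct and follows essentially the same approach as the paper's: compute $\widehat{W_R}$, localize its support to the thin annulus $\{R-1\leq|\xi|\leq R+1\}$, observe that only a bounded (in fact at most three in the paper, five in your version) number of Littlewood--Paley blocks survive, bound each surviving $\|\Delta_j W_R\|_{L^p}$ by $C\|\varphi\|_{L^p}$, and sum. The only cosmetic differences are that you spell out the Young inequality step explicitly and allow a slightly larger index set $J_0$; neither affects the argument.
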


\noindent
\begin{proof}
From a direct calculation, it follows that
\begin{align*}
\widehat{W_R}(\xi) =\frac{1}{2}
\left( \widehat \varphi(\xi-R e_1) + \widehat \varphi (\xi+R e_1) \right),
\end{align*}
which yields
\begin{align*}
\supp \widehat{W_R} \subset \{\xi\in \mathbb{R}^n;\ R-1
\leq |\xi| \leq R+1\}.
\end{align*}
This fact shows that if $R$ is sufficiently large, there exist at most three indices $j\in\mathbb{Z}$ such that $\Delta_j W_R \not\equiv 0$, and such $j$ should satisfy $(R-1)/2\leq 2^j \leq 2(R+1)$. We then obtain
\begin{align*}
\|W_R\|_{\dot B^{s}_{p,1}(\mathbb{R}^n)} 
\leq \sum_{(R-1)/2\leq 2^j \leq 2(R+1)} 2^{js}\|\Delta_j W_R\|_{L^p}
\leq CR^s\|\varphi\|_{L^p} \leq CR^s,
\end{align*}
which completes the proof.
\end{proof}

\section{High-dimensional analysis}
\label{sec:nD}
The aim of this section is to prove Theorem \ref{thm:rough} in the case of $n \geq 3$.
To this end, we focus on the following theorem that claims slightly stronger statement than Theorem \ref{thm:rough}. 
\begin{thm}\label{thm:nD}
    Let $n$, $r$, $\sigma$, and $\rho$ satisfy either \textup{(1)} or \textup{(2)}:
    \begin{enumerate}
        \item [\textup{(1)}] $n \geq 3$,\quad $n<r<2n$,\quad $1 \leq \sigma < \infty$,
        \item [\textup{(2)}] $n \geq 3$,\quad $r=n$,\quad $2<\sigma < \infty$.
    \end{enumerate}
    Then, there exist positive constants $\varepsilon_0=\varepsilon_0(n,r,\sigma)$ and $c_0=c_0(n,r,\sigma)$ such that for any $0<\varepsilon\leq \varepsilon_0$, there exists an external force $f_{\varepsilon} \in \widetilde{C}((0,\infty);\dB_{r,\sigma}^{n/r-3}(\mathbb{R}^n))$ such that 
    \begin{align}\label{f-1}
        \n{f_{\varepsilon}}_{\widetilde{L^{\infty}}(0,\infty;\dB_{r,\sigma}^{n/r-3})}\leq \varepsilon,
        \qquad
        \lim_{t \to \infty}
        \n{f_{\varepsilon}(t)}_{\dB_{r,\sigma}^{n/r-3}}
        =
        0,
    \end{align}
    and
    \eqref{eq:f-NS} with $a=0$ and $f=f_{\varepsilon}$ admits a global solution $u_{\varepsilon} \in \widetilde{C}([0,\infty);\dB_{r,\sigma}^{n/r-1}(\mathbb{R}^n))$ satisfying 
    \begin{align}\label{osc-1}
        \limsup_{t \to \infty}\n{u_{\varepsilon}(t)}_{\dB_{\infty,\infty}^{-1}}
        \geq c_0,
        \qquad
        \liminf_{t \to \infty}
        \n{u_{\varepsilon}(t)}_{\dB_{r,\sigma}^{n/r-1}}
        =0.
    \end{align}
\end{thm}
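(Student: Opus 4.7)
The plan is to follow the inductive pulse-construction outlined in the introduction. I fix a single ``pulse length'' $T_*>0$ depending only on $(n,r,\sigma)$ and inductively select an increasing sequence $\{T_k\}_{k\geq 1}$, then build $f_\varepsilon$ as a sum of pulses supported on the disjoint windows $[T_k,T_k+T_*]$ with $\dB_{r,\sigma}^{n/r-3}$-amplitudes of order $2^{-k}\varepsilon$. Between windows I take $f_\varepsilon\equiv 0$ and choose $T_{k+1}$ large enough that the unforced Navier--Stokes flow drives $\|u_\varepsilon(T_{k+1})\|_{\dB_{r,\sigma}^{n/r-1}}$ below a sequence tending to zero, using the classical critical-norm decay of Cannone--Planchon \cite{Can-Pla-96} and Planchon \cite{Pla-98}, applicable because $\sigma<\infty$ in both cases of the theorem. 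The lower bound I will produce at $t=T_k+T_*$ will be of size $c_0$ uniform in $k$ and $\varepsilon$, so that the two subsequences $\{T_k\}$ and $\{T_k+T_*\}$ simultaneously witness $\liminf = 0$ in $\dB_{r,\sigma}^{n/r-1}$ and $\limsup\geq c_0$ in the weaker norm $\dB_{\infty,\infty}^{-1}$ (which, by Besov embedding, is dominated by $\dB_{r,\sigma}^{n/r-1}$, so the limsup statement remains meaningful).

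The spatial profile of the $k$-th pulse is built on a single high-frequency cosine of the form $W_{R_k}$ from Lemma \ref{lemm:cos}. Concretely, I choose a divergence-free vector field $\phi_{R_k}$ concentrated in Fourier at scale $R_k$ whose self-interaction $\mathbb{P}\,\mathrm{div}(\phi_{R_k}\otimes\phi_{R_k})$ produces a nontrivial low-frequency resonant output, following the Bourgain--Pavlovi\'c / Yoneda / Tsurumi ill-posedness scheme; this is precisely the mechanism that breaks the bilinear estimate \eqref{key-bilin} for $r\geq n$. The pulse on the $k$-th window is then taken of the form $\chi_k(t)\,h(t)\,(-\Delta)\phi_{R_k}$ (or a comparable choice), tuned so that the first Picard iterate on the window is, modulo heat smoothing, a time-dependent multiple of $\phi_{R_k}$ of $\dB_{r,\sigma}^{n/r-1}$-size comparable to a fixed small $\eta$. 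Rescaling by $2^{-k}$ meets the force bound, while letting $R_k\to\infty$ sufficiently slowly ensures $\|f_\varepsilon(t)\|_{\dB_{r,\sigma}^{n/r-3}}\to 0$ as $t\to\infty$.

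The main analytic step is the quantitative lower bound $\|u_\varepsilon(T_k+T_*)\|_{\dB_{\infty,\infty}^{-1}}\geq c_0$, uniform in $k$ and $\varepsilon$. I decompose $u_\varepsilon = v + w$ on $[T_k,T_k+T_*]$, where $v$ solves the linear forced heat equation with initial datum $u_\varepsilon(T_k)$ and the $k$-th pulse as forcing, and $w$ absorbs the remaining nonlinear interaction. A direct paraproduct computation on $v\otimes v$ shows that the second Picard iterate $\int_{T_k}^t e^{(t-\tau)\Delta}\mathbb{P}\,\mathrm{div}(v\otimes v)\,d\tau$ at $t=T_k+T_*$ has $\dB_{\infty,\infty}^{-1}$-norm bounded below by $c\eta^2$, provided $R_k$ and $T_*$ are large enough and independent of $k,\varepsilon$. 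The nonlinear remainder $w$ is then controlled by Yamazaki's weak-$L^n$ framework (Lemma \ref{lemm:Yamazaki}): since $\|v\|_{L^\infty(I;L^{n,\infty})}\lesssim \eta$ and the residual nonlinearity is of cubic order in $\eta$, one gets $\|w\|_{L^\infty(I;L^{n,\infty})}\lesssim \eta^3\ll \eta^2$, which transfers the second-iterate lower bound to $u_\varepsilon$ itself at $t=T_k+T_*$ via the continuous embedding $L^{n,\infty}(\mathbb{R}^n)\hookrightarrow \dB_{\infty,\infty}^{-1}(\mathbb{R}^n)$.

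The step I expect to be the real obstacle is the uniformity of $T_*$, namely that a single window length suffices for every $k$ and every $\varepsilon\in(0,\varepsilon_0]$. This requires a delicate balance: $R_k$ must be large relative to $T_*$ so that the resonant output of $\phi_{R_k}\otimes\phi_{R_k}$ survives the heat propagator on the window, yet the Yamazaki cubic remainder, the initial error $u_\varepsilon(T_k)$ carried over from the previous cycle, and the new pulse contribution must all be dominated by $\eta^2$ with constants independent of the pulse index. Making these constants genuinely independent of $k$ and $\varepsilon$, together with a clean interface between the Yamazaki $L^{n,\infty}$-framework used for the lower-bound transfer and the Chemin--Lerner critical-Besov framework used for the upper bounds and the dissipation between pulses, is where the bulk of the technical work will lie.
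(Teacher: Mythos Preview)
Your plan tracks the paper's proof closely: the same inductive pulse scheme, the same high-frequency cosine profiles to break the bilinear estimate, the same splitting $u=u^{(1)}+u^{(2)}+\widetilde u$ with Yamazaki's $L^{n,\infty}$ machinery controlling the remainder. Two technical points, however, are handled differently in the paper and are precisely where your outline has gaps.

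First, the decay between pulses cannot be read off Cannone--Planchon/Planchon as a black box. Those results require smallness of the initial datum in $\dB_{r,\sigma}^{n/r-1}$; at $t=T_k+T_*$ you only have smallness in $L^{n,\infty}$, while the $\dB_{r,\sigma}^{n/r-1}$-norm of $u^{(2)}(T_k+T_*)+\widetilde u(T_k+T_*)$ is \emph{not} uniformly small --- it blows up as $R_k\to\infty$, which is exactly the manifestation of the failure of \eqref{key-bilin}. The paper resolves this by constructing the solution not only in $\widetilde{L^\infty}(t_0,\infty;\dB_{r,\sigma}^{n/r-1})$ but simultaneously in an auxiliary space $\widetilde{L^\rho}(t_0,\infty;\dB_{r,\sigma}^{n/r-1+2/\rho})$ for a carefully chosen $2<\rho<r/(r-n)$ (or $\rho<\infty$ when $r=n$); for fixed $k$ this norm is finite, so its tail vanishes, and decay of $\n{u(t)}_{\dB_{r,\sigma}^{n/r-1}}$ follows from this together with the mixed nonlinear estimate of Lemma~\ref{cor:nonlin-ndim}, which pairs $L^{n,\infty}$ with $\widetilde{L^\rho}$. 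This auxiliary $\rho$-space is where the ``clean interface'' you anticipate between the Yamazaki and Chemin--Lerner frameworks actually lives.

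Second, the $k$-uniform $L^{n,\infty}$-control that must be fed into the hypothesis of the next pulse (Proposition~\ref{prop:nD} requires $\n{a}_{L^{n,\infty}}\leq C_*\eta$) is \emph{not} obtained by tracking constants through the inductive construction --- that would give a bound $K_0\eta$ with $K_0$ possibly larger than $C_*$. Instead, the paper uses a global uniqueness argument: the concatenated function $\sum_{k} u_{\delta_k,\eta_*}{\bf 1}_{[T_{k-1},T_k]}$ solves \eqref{eq:f-NS} with $a=0$ and lies in the Yamazaki uniqueness class \eqref{unique}, hence coincides with the single global $L^{n,\infty}$-solution of Proposition~\ref{prop:Yamazaki}, which carries the fixed bound $C_*\eta_*$ regardless of how many pulses have been stacked. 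This is the device that resolves your final paragraph's concern about constants drifting with $k$; without it the induction does not close.
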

\begin{rem}\label{rem:nD}
    For any $p$ and $q$ satisfying the assumptions (1) or (2) in Theorem \ref{thm:rough}, we may choose $r=r(n,p) \leq p$ and $\sigma=\sigma(q) \leq q$ enjoying (1) or (2) in Theorem \ref{thm:nD}, respectively.
    Then, by making use of the continuous embedding $\dB_{r,\sigma}^{n/r+s}(\mathbb{R}^n) \hookrightarrow \dB_{p,q}^{n/p+s}(\mathbb{R}^n) \hookrightarrow \dB_{\infty,\infty}^{s}(\mathbb{R}^n)$ with $s=-1,-3$, 
    we immediately obtain Theorem \ref{thm:rough} with $n \geq 3$ from Theorem \ref{thm:nD}.
\end{rem}
\subsection{Key propositions}
In order to prove Theorem \ref{thm:nD}, we begin by preparing some crucial propositions.
We first mention the well-posedness result of \eqref{eq:f-NS} in $L^\infty(0,\infty;L^{n,\infty}(\mathbb{R}^n))$ proved by Yamazaki \cite{Yam-00}, which plays an important role in controlling the $L^{n,\infty}$ norm of our solution.
In order to suit our problem, the following proposition presents a slight modification of the assumption on external forces from the original result \cite{Yam-00}.
\begin{prop}\label{prop:Yamazaki}
    Let $n \geq 3$.
    Then, there exist positive constants $C_*$, $\mu=\mu(n)$, and $\nu=\nu(n)$
    such that
    for if $f \in \widetilde{C}((0,\infty);\dB_{n,2}^{-2}(\mathbb{R}^n))$
    enjoy
    \begin{align}
        \n{f}_{\widetilde {L^{\infty}}(0,\infty;\dB_{n,2}^{-2})}
        \leq
        \mu, 
    \end{align}
    then
    \eqref{eq:f-NS} with $a=0$ possesses a solution $u \in C((0,\infty);L^{n,\infty}(\mathbb{R}^n))$ satisfying 
    \begin{align}
        \sup_{t>0}\n{u(t)}_{L^{n,\infty}}
        \leq 
        C_*
        \n{f}_{\widetilde {L^{\infty}}(0,\infty;\dB_{n,2}^{-2})}.
    \end{align}
    Moreover, if two solutions $u$ and $v$ to \eqref{eq:f-NS} with $u(0)=v(0)=a$ belong to 
    \begin{align}\label{unique}
        \Mp{u \in L^{\infty}(0,\infty;L^{n,\infty}(\mathbb{R}^n))
        \ ; \  
        \n{u}_{L^{\infty}(0,\infty;L^{n,\infty})}
        \leq
        \nu},
    \end{align}
    then it holds $u \equiv v$.
\end{prop}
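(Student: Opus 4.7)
The plan is to adapt the Picard iteration of Yamazaki \cite{Yam-00} to the mild formulation
\begin{align}
u(t) = L[f](t) + B(u,u)(t),
\end{align}
where $L[f](t):=\int_0^t e^{(t-\tau)\Delta}\mathbb{P}f(\tau)d\tau$ and $B(u,v)(t):=-\int_0^t e^{(t-\tau)\Delta}\mathbb{P}\div(u\otimes v)(\tau)d\tau$. Yamazaki's original argument rests on a representation $f=\div F$ with $F\in L^\infty(0,\infty;L^{n/2,\infty}(\mathbb{R}^n))$; since a forcing $f\in\widetilde{L^\infty}(\dB^{-2}_{n,2})$ does not in general admit such a decomposition, the only estimate that must be re-established is the linear bound on $L[f]$ in $L^\infty(0,\infty;L^{n,\infty})$, while the bilinear estimate of \cite{Yam-00} is available verbatim.

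For the linear estimate, we apply the maximal regularity in the Chemin--Lerner space (Lemma \ref{lemm:linear}) with $(s,p,q,r,\rho)=(0,n,2,\infty,\infty)$ to get
\begin{align}
\n{L[f]}_{\widetilde{L^\infty}(0,\infty;\dB^{0}_{n,2})}\leq C\n{f}_{\widetilde{L^\infty}(0,\infty;\dB^{-2}_{n,2})}.
\end{align}
For $n\geq 3$ one has $\min(n,2)=2$, so the embedding chain $\dB^0_{n,2}(\mathbb{R}^n)\hookrightarrow L^n(\mathbb{R}^n)\hookrightarrow L^{n,\infty}(\mathbb{R}^n)$ together with $\widetilde{L^\infty}(\dB^0_{n,2})\hookrightarrow L^\infty(\dB^0_{n,2})$ gives $\n{L[f]}_{L^\infty(0,\infty;L^{n,\infty})}\leq C_L\n{f}_{\widetilde{L^\infty}(\dB^{-2}_{n,2})}$. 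The same chain, starting now from $f\in\widetilde{C}((0,\infty);\dB^{-2}_{n,2})$, also furnishes the continuity $L[f]\in C((0,\infty);L^{n,\infty})$.

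Equipped with this linear bound and Yamazaki's bilinear estimate $\n{B(u,v)}_{L^\infty(L^{n,\infty})}\leq C_B\n{u}_{L^\infty(L^{n,\infty})}\n{v}_{L^\infty(L^{n,\infty})}$, we set up Banach's contraction principle on the closed ball of radius $2C_L\mu$ in $L^\infty(0,\infty;L^{n,\infty})$; this produces a unique fixed point whenever $\mu$ is small enough that $4C_LC_B\mu<1$, and setting $C_*:=2C_L$ delivers the announced a priori bound. For the uniqueness statement, given two solutions $u_1,u_2$ in the set \eqref{unique} sharing the same initial data, the difference $w:=u_1-u_2$ satisfies $w=B(u_1,w)+B(w,u_2)$ (the forcing cancels), and the bilinear estimate yields $\n{w}_{L^\infty(L^{n,\infty})}\leq 2C_B\nu\n{w}_{L^\infty(L^{n,\infty})}$; the choice $\nu:=(4C_B)^{-1}$ then forces $w\equiv 0$.

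The only genuinely non-trivial ingredient in this adaptation is the passage from the Chemin--Lerner norm of $f$ to a pointwise-in-time $L^{n,\infty}$ bound on $L[f]$. This is exactly what the refinement $\dB^0_{n,2}\hookrightarrow L^n$ (valid precisely because $\min(n,2)=2$) provides; if the assumption on $f$ had been made only in $\dB^{-2}_{n,\infty}$, the Besov target would degrade to $\dB^0_{n,\infty}$, which is not contained in $L^n$, and the whole reduction would collapse.
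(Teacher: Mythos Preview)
Your proof is correct and follows essentially the same approach as the paper: the paper also reduces the existence part to the contraction mapping principle via the chain $L^{n,\infty}\hookleftarrow L^n\hookleftarrow \dB^0_{n,2}$ combined with the Chemin--Lerner maximal regularity estimate $\n{L[f]}_{\widetilde{L^\infty}(\dB^0_{n,2})}\leq C\n{f}_{\widetilde{L^\infty}(\dB^{-2}_{n,2})}$, together with Yamazaki's bilinear estimate (Lemma~\ref{lemm:Yamazaki}). Your additional remarks on the continuity of $L[f]$, the explicit contraction constants, and the role of the index $q=2$ are welcome elaborations but do not depart from the paper's argument.
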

Proposition \ref{prop:Yamazaki} is immediately obtained from the standard contraction mapping argument via the linear estimate
\begin{align}
    \n{\int_0^t e^{(t-\tau)\Delta}\mathbb{P}f(\tau)d\tau}_{L^{\infty}(0,\infty;L^{n,\infty})}
    \leq{}&
    C
    \n{\int_0^t e^{(t-\tau)\Delta}\mathbb{P}f(\tau)d\tau}_{L^{\infty}(0,\infty;L^{n})}\\
    \leq{}&
    C
    \n{\int_0^t e^{(t-\tau)\Delta}\mathbb{P}f(\tau)d\tau}_{\widetilde{L^{\infty}}(0,\infty;\dB_{n,2}^0)}\\
    \leq{}&
    C
    \n{f}_{\widetilde{L^{\infty}}(0,\infty;\dB_{n,2}^{-2})}
\end{align}
and the nonlinear estimate given by Lemma \ref{lemm:Yamazaki}.
Thus, we omit the proof.

As was mentioned in Section \ref{sec:intro}, we prove our main result by extending the appropriate external force and solution to the next temporal interval inductively. 
To this end, we introduce the following proposition.
\begin{prop}\label{prop:nD}
    Let $n \geq 3$ and $t_0 \in \mathbb{R}$. 
    Let $r$, $\sigma$, and $\rho$ satisfy
    \begin{align}\label{case1}
        n< r < 2n, \qquad 1 \leq \sigma < \infty, \qquad 2 < \rho < \frac{r}{r-n},
    \end{align}
    or
    \begin{align}\label{case2}
        r=n, \qquad 2 < \sigma < \infty, \qquad 2 < \rho < \infty.
    \end{align}
    Then, there exist positive constants $\eta_0=\eta_0(n,r,\sigma,\rho)$, $K_0=K_0(n,r,\sigma,\rho)$, and  $T_*=T_*(n,r,\sigma,\rho)$ such that for any $0< \eta \leq \eta_0$ and $0<\delta \leq \eta^2$, there exists $f_{\delta,\eta} \in \widetilde{C}([t_0,\infty);\dB_{r,\sigma}^{n/r-3}(\mathbb{R}^n))\cap  \widetilde{C}([t_0,\infty);\dB_{n,2}^{-2}(\mathbb{R}^n))$
    satisfying
    $f_{\delta,\eta}(t_0)=0$, $f_{\delta,\eta}(t)=0$ for all $t \geq t_0+T_*$, and 
    \begin{align}
        \n{f_{\delta,\eta}}_{{\widetilde{L^{\infty}}}(t_0,\infty;\dB_{n,2}^{-2})} 
        \leq \eta,
        \qquad 
        \n{f_{\delta,\eta}}_{\widetilde{L^{\infty}}(t_0,\infty;\dot{B}^{n/r-3}_{r,\sigma})}
        \leq \eta\delta,
    \end{align}
    such that
    for any $a \in L^{n,\infty}(\mathbb{R}^n) \cap \dB_{r,\sigma}^{n/r-1}(\mathbb{R}^n)$ with\footnote{The constant $C_*$ is the same one appearing in Proposition \ref{prop:Yamazaki}.}
    \begin{align}
        \div a = 0, 
        \qquad 
        \n{a}_{L^{n,\infty}} \leq C_*\eta, 
        \qquad
        \n{a}_{\dB_{r,\sigma}^{n/r-1}}
        \leq 
        \eta^3,
    \end{align}
    the forced Navier--Stokes system 
    \begin{align}\label{eq:f-NS-t0}
        \begin{cases}
            \partial_t u - \Delta u + \mathbb{P} \div 
            \sp{u \otimes u} = f_{\delta,\eta}, &t > t_0, x \in \mathbb{R}^n, \\
            \div u = 0, & t \geq t_0, x \in \mathbb{R}^n, \\
            u(t_0,x)=a(x), & t=t_0,x\in \mathbb{R}^n
        \end{cases}
    \end{align}
    admits a unique solution  
    \begin{align}
        u_{\delta,\eta}\in {}
        &
        C((t_0,\infty);L^{n,\infty}(\mathbb{R}^n))
        \\
        &
        \cap 
        \widetilde{C}([t_0,\infty);\dB_{r,\sigma}^{n/r-1}(\mathbb{R}^n)) \cap \widetilde{L^{\rho}}(t_0,\infty;\dB_{r,\sigma}^{n/r-1+2/\rho}(\mathbb{R}^n))
    \end{align}
    satisfying
    \begin{align}
        &
        \n{u_{\delta,\eta}}_{L^{\infty}(t_0,\infty;L^{n,\infty})}
        \leq K_0\eta,
        \\
        &
        \n{u_{\delta,\eta}(t_0+T_*)}_{\dB_{\infty,\infty}^{-1}}
        \geq \frac{\eta^2}{K_0},
        \\
        &
        \lim_{t \to \infty}
        \n{u_{\delta,\eta}(t)}_{\dB_{r,\sigma}^{n/r-1}}
        =
        0.
    \end{align}
\end{prop}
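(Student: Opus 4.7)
My plan is to follow the forcing-based ill-posedness strategy of T.~\cite{Tsu-19} in the spirit of Bourgain--Pavlovi\'c \cite{Bou-Pav-08}: choose the external force as a time-cutoff single-scale spatial profile whose quadratic Picard iterate carries a low-frequency resonance of size $\eta^2$, produce the solution via the $L^{n,\infty}$-framework of Proposition \ref{prop:Yamazaki}, and extract the Besov regularity from a frequency-by-frequency analysis of the Picard expansion. For $t\geq t_0+T_*$ the force vanishes and dissipation relaxes the solution to zero.

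Concretely, fix $\chi\in C_c^\infty([t_0,t_0+T_*])$ with $\chi(t_0)=0$ and $\chi\equiv 1$ on the middle half, and let $F_R$ be a divergence-free, vector-valued variant of the function $W_R$ of Lemma \ref{lemm:cos}, chosen so that $F_R\otimes F_R$ carries a non-vanishing component at some fixed unit frequency (e.g.\ a superposition of two high-frequency cosine bumps whose frequency difference is $O(1)$, following \cite{Bou-Pav-08}). Set $f_{\delta,\eta}(t,x)=\lambda\chi(t)F_R(x)$ with $\lambda=\eta R^2$ and $R\sim\delta^{-1/(1-n/r)}$, so that Lemma \ref{lemm:cos} yields
\begin{align*}
    \n{f_{\delta,\eta}}_{\widetilde{L^\infty}(t_0,\infty;\dB_{n,2}^{-2})}\leq C\eta,\qquad
    \n{f_{\delta,\eta}}_{\widetilde{L^\infty}(t_0,\infty;\dB_{r,\sigma}^{n/r-3})}\leq C\eta R^{n/r-1}\leq C\eta\delta.
\end{align*}
Proposition \ref{prop:Yamazaki} produces an $L^\infty((t_0,\infty);L^{n,\infty})$-solution $u_{\delta,\eta}$ with $\n{u_{\delta,\eta}}_{L^\infty(L^{n,\infty})}\leq K_0\eta$, and Lemmas \ref{lemm:linear}--\ref{lemm:nonlin-ndim} on the finite interval $[t_0,t_0+T_*]$, combined with the unforced Cannone--Planchon theory \cite{Can-Pla-96,Pla-98} on $[t_0+T_*,\infty)$, lift this to the claimed Chemin--Lerner regularity.

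For the lower bound at $t=t_0+T_*$, expand $u_{\delta,\eta}=v_1+v_2+w$ with $v_1=e^{(t-t_0)\Delta}a+\int_{t_0}^t e^{(t-\tau)\Delta}\mathbb{P}f_{\delta,\eta}(\tau)d\tau$, $v_2=-\int_{t_0}^t e^{(t-\tau)\Delta}\mathbb{P}\div(v_1\otimes v_1)d\tau$, and $w$ the cubic-or-higher remainder, which satisfies a Picard-type equation and is $O(\eta^3)$ in $X=\widetilde{L^\infty}(\dB_{r,\sigma}^{n/r-1})\cap \widetilde{L^\rho}(\dB_{r,\sigma}^{n/r-1+2/\rho})$ through Lemma \ref{lemm:nonlin-ndim}. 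The Fourier-side design of $F_R$ ensures that $v_2(t_0+T_*)$ carries a bump at some fixed dyadic scale $2^{j_0}\sim 1$ of amplitude $\geq c\eta^2$, provided $T_*$ is chosen so large that the heat-kernel factor at unit frequency is $\Theta(1)$. At that same scale, $v_1(t_0+T_*)$ contributes only $O(\eta^3)$ (the $f$-driven part is spectrally localized near $R\gg 1$; the $a$-driven part is $\eta^3$-bounded in $\dB_{r,\sigma}^{n/r-1}$ by hypothesis) and $w(t_0+T_*)$ is $O(\eta^3)$, so neither can cancel the $v_2$-signature, yielding $\n{u_{\delta,\eta}(t_0+T_*)}_{\dB_{\infty,\infty}^{-1}}\geq \eta^2/K_0$. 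A parallel estimate of the same expansion gives $\n{u_{\delta,\eta}(t_0+T_*)}_{\dB_{r,\sigma}^{n/r-1}}\leq C\eta^2$, which for $\eta$ small lies below the Cannone--Planchon small-data threshold for the unforced equation; since $\sigma<\infty$, their result delivers $\n{u_{\delta,\eta}(t)}_{\dB_{r,\sigma}^{n/r-1}}\to 0$.

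The main obstacle is the lower-bound computation: I must select $F_R$ so that $\mathbb{P}\div(F_R\otimes F_R)$ has a non-vanishing, divergence-compatible component at a fixed frequency that survives the heat semigroup over the finite interval of length $T_*$, and then rigorously control the cubic remainder $w$ and the $a$-driven part of $v_1$ at that frequency uniformly in the arbitrary initial datum $a$ and in the parameters $\delta,\eta$. This is the forced-setting analogue of the resonance analyses in \cite{Bou-Pav-08} and \cite{Tsu-19}.
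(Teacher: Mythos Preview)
Your overall strategy---a time-cutoff high-frequency force, the Picard decomposition $u=v_1+v_2+w$, and the extraction of a low-frequency $\eta^2$-signature from $v_2$---matches the paper's. The gap is in your remainder estimate. You claim $w=O(\eta^3)$ in $\widetilde{L^\infty}(\dB_{r,\sigma}^{n/r-1})\cap \widetilde{L^\rho}(\dB_{r,\sigma}^{n/r-1+2/\rho})$ via Lemma~\ref{lemm:nonlin-ndim}, but this cannot close: the first iteration satisfies
\[
\n{v_1}_{\widetilde{L^\rho}(t_0,\infty;\dB_{r,\sigma}^{n/r-1+2/\rho})} \leq C\eta\,\delta^{\,1-\frac{2r}{\rho(r-n)}}\,T_*^{1/\rho},
\]
and since $\rho<r/(r-n)$ the $\delta$-exponent is strictly negative, so this norm diverges as $\delta\to 0$. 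Any application of Lemma~\ref{lemm:nonlin-ndim} with both factors in these Besov--Chemin--Lerner spaces therefore produces source terms in the $w$-equation that are \emph{not} $O(\eta^3)$ uniformly in $\delta$, and likewise the linearized operator $w\mapsto B(v_1+v_2,w)$ is not a contraction there. This is precisely the breakdown of the bilinear estimate \eqref{key-bilin} for $r>n$ that the whole result exploits; you cannot simultaneously use it to control the remainder.

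The paper's remedy is to carry the norm $L^\infty(t_0,\infty;L^{n,\infty})$ throughout the construction and use the \emph{mixed} estimate of Lemma~\ref{cor:nonlin-ndim},
\[
\n{B(u,v)}_{Y_{r,\sigma,\rho}}\leq C\n{u}_{L^\infty(L^{n,\infty})}\n{v}_{Y_{r,\sigma,\rho}},
\]
together with Lemma~\ref{lemm:Yamazaki}. Since $\n{v_1}_{L^\infty(L^{n,\infty})}\leq C\eta$ \emph{is} genuinely small (independently of $\delta$), the contraction closes in $L^\infty(L^{n,\infty})\cap Y_{r,\sigma,\rho}$ and yields $\n{w}_{L^\infty(L^{n,\infty})}\leq C\eta^3$; the lower bound at $t_0+T_*$ then follows from the embedding $L^{n,\infty}\hookrightarrow\dB_{\infty,\infty}^{-1}$, not from a Besov bound on $w$. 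The same issue undermines your decay argument: the $\dB_{r,\sigma}^{n/r-1}$-norm of $u$ is not shown to be $\leq C\eta^2$ uniformly in $\delta$ (indeed the $Y_{r,\sigma,\rho}$-bound in Step~4 of the paper carries a $\delta$-dependent factor), so invoking a Cannone--Planchon small-data threshold is unjustified. The paper instead uses the global finiteness of $\n{u}_{\widetilde{L^\rho}(t_0,\infty;\dB_{r,\sigma}^{n/r-1+2/\rho})}$ together with Lemma~\ref{cor:nonlin-ndim} to obtain decay directly, without any smallness at $t_0+T_*$.
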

To construct the desired solutions, we prepare the following nonlinear estimate.
\begin{lemm}\label{cor:nonlin-ndim}
    For any $n \geq 2$, $n\leq r < 2n$, $2<\rho<r/(r-n)$, and $\rho\leq \rho_1 \leq \infty$, 
    there exists a positive constant $C=C(n,r,\sigma,\rho)$ such that
    \begin{align}
    &\n{
    \int_{t_0}^t
    e^{(t-\tau)\Delta}
    \mathbb{P}\div(u(\tau) \otimes v(\tau))d\tau
    }_{\widetilde{L^{\rho_1}}(I;\dB_{p,q}^{n/r-1+2/\rho_1})}\\
    &\quad\leq
    C
    \n{u}_{L^{\infty}(I;L^{n,\infty})}
    \n{v}_{\widetilde{L^{r}}(I;\dB_{r,\sigma}^{n/r-1+2/\rho})}
    \end{align}
    for all $u \in L^{\infty}(I;L^{n,\infty}(\mathbb{R}^n))$ and $v \in \widetilde{L^{\rho}}(I;\dB_{r,\sigma}^{n/r-1+2/\rho}(\mathbb{R}^n))$.
\end{lemm}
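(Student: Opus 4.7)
The plan is to reduce the assertion to Lemma \ref{lemm:nonlin-ndim} by replacing the weak-$L^{n}$ control of $u$ with a Chemin--Lerner Besov bound. The crucial ingredient is the continuous embedding
\begin{align}
L^{n,\infty}(\mathbb{R}^n) \hookrightarrow \dB_{p_1,\infty}^{n/p_1-1}(\mathbb{R}^n),\qquad n<p_1\leq \infty,
\end{align}
stated in Section \ref{sec:pre}. Taking the $L^{\infty}_t$-norm yields
\begin{align}
\n{u}_{\widetilde{L^{\infty}}(I;\dB_{p_1,\infty}^{n/p_1-1})}\leq C\n{u}_{L^{\infty}(I;L^{n,\infty})},
\end{align}
so that $u$ fits the left slot of Lemma \ref{lemm:nonlin-ndim} with time index $r_2=\infty$ and third Besov index $q_2=\infty$.

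I then apply Lemma \ref{lemm:nonlin-ndim} with the choices $r_1=\rho_1$, $r_2=\infty$, $r_3=\rho$, $q_1=\sigma$, $q_2=\infty$, $q_3=\sigma$, outer spatial index $r$ (so the output and the $v$-slot both live in $\dB^{\cdots}_{r,\sigma}$), and $p_1>n$ to be chosen. The admissibility conditions of Lemma \ref{lemm:nonlin-ndim} then reduce to: $\rho\leq \rho_1$ (a hypothesis); $r_2,r_3>2$ (automatic since $\rho>2$); the third-index constraint $1/\sigma\leq 1/\sigma$ (trivial); the paraproduct bound $n/r-n/p_1<1-2/\rho$; and the remainder bound $\min\Mp{n,\, n(1/r+1/p_1)}-2+1/\rho>0$. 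Since $n/r\leq 1$ and $n/p_1<1$ imply $n(1/r+1/p_1)<2\leq n$, the minimum equals $n(1/r+1/p_1)$, and together with $p_1>n$ the last two conditions prescribe the admissible range
\begin{align}
\max\Mp{\frac{n}{r}-1+\frac{2}{\rho},\ 2-\frac{n}{r}-\frac{1}{\rho}}<\frac{n}{p_1}<1.
\end{align}

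The only nontrivial step is verifying that this interval is nonempty. The first lower bound is strictly below $1$ because $\rho>2$ forces $2-2/\rho>1\geq n/r$. The second lower bound is strictly below $1$ precisely because the hypothesis $\rho<r/(r-n)$ rearranges to $n/r+1/\rho>1$, equivalently $2-n/r-1/\rho<1$. Hence an admissible $p_1\in(n,\infty)$ exists, and Lemma \ref{lemm:nonlin-ndim} delivers the asserted estimate. I expect this final verification --- the observation that the seemingly technical hypothesis $\rho<r/(r-n)$ is exactly the scaling constraint that allows $p_1$ to be chosen above $n$ while simultaneously satisfying the paraproduct and remainder bounds --- to be the single subtle point, with the rest being routine bookkeeping of indices.
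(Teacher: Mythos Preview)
Your argument is correct and follows the same route as the paper: choose an auxiliary $p_1>n$, use the embedding $L^{n,\infty}(\mathbb{R}^n)\hookrightarrow \dB_{p_1,\infty}^{n/p_1-1}(\mathbb{R}^n)$ together with $\widetilde{L^{\infty}}=L^{\infty}$ when the third Besov index is $\infty$, and then invoke Lemma \ref{lemm:nonlin-ndim} with $(r_1,r_2,r_3)=(\rho_1,\infty,\rho)$ and $(q_1,q_2,q_3)=(\sigma,\infty,\sigma)$. Your identification of the admissible window for $n/p_1$ and the observation that the hypothesis $\rho<r/(r-n)$ is precisely what guarantees this window is nonempty match the paper's computation exactly (up to the paper's inclusion of the harmless extra lower bound $0$).
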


\begin{proof}
    We choose may $r_1$ so that
    \begin{align}
        \max\Mp{0,\frac{1}{r}-\frac{1}{n}\sp{1 - \frac{2}{\rho}},\frac{2}{n}\sp{1 - \frac{1}{\rho}} - \frac{1}{r}
        }
        <
        \frac{1}{r_1}
        < 
        \frac{1}{n}.
    \end{align}
    Then, for $\rho\leq \rho_1 \leq \infty$, we have by Lemma \ref{lemm:nonlin-ndim} that
    \begin{align}
    &\n{
    \int_{t_0}^t
    e^{(t-\tau)\Delta}
    \mathbb{P}\div(u(\tau) \otimes v(\tau))d\tau
    }_{\widetilde{L^{\rho_1}}(I;\dB_{r,q}^{n/r-1+2/\rho_1})}\\
    &\quad\leq
    C
    \n{u}_{L^{\infty}(I;\dB_{r_1,\infty}^{n/r_1-1})}
    \n{v}_{\widetilde{L^{\rho}}(I;\dB_{r,\sigma}^{n/r-1+2/\rho})}\\
    &\quad\leq
    C
    \n{u}_{L^{\infty}(I;L^{n,\infty})}
    \n{v}_{\widetilde{L^{r}}(I;\dB_{r,\sigma}^{n/r-1+2/\rho})}
    \end{align}
    by using
    $L^\infty(I;\dB^{n/r_1-1}_{r_1,\infty}(\mathbb{R}^n))= 
    \widetilde{L^\infty}(I;\dB^{n/r_1-1}_{r_1,\infty}(\mathbb{R}^n))$
    and the embedding $L^{n,\infty}(\mathbb{R}^n) \hookrightarrow \dB_{r_1,\infty}^{n/r_1-1}(\mathbb{R}^n)$.
\end{proof}
In addition, we recall an estimate of the nonlinear term in $L^\infty(I;L^{n,\infty})$.
\begin{lemm}[\cite{Yam-00}]\label{lemm:Yamazaki}
    Let $n\geq 3$.
    Then, there exists a positive constant $C=C(n)$ such that 
    \begin{align}
    \n{
    \int_{t_0}^t
    e^{(t-\tau)\Delta}
    \mathbb{P}\div(u(\tau) \otimes v(\tau))d\tau
    }_{L^{\infty}(I;L^{n,\infty})}
    \leq
    C
    \n{u}_{L^{\infty}(I;L^{n,\infty})}
    \n{v}_{L^{\infty}(I;L^{n,\infty})}
    \end{align}
    for all $I=(t_0,t_1) \subset \mathbb{R}$ and $u,v \in L^{\infty}(I;L^{n,\infty}(\mathbb{R}^n))$.
\end{lemm}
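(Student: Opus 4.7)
The plan is to pass through the explicit Oseen kernel representation of the semigroup $e^{t\Delta}\mathbb{P}\div$ and then handle the critical endpoint at $p=n$ by a real-interpolation argument. Concretely, I would first write
\begin{equation*}
\int_{t_0}^t e^{(t-\tau)\Delta}\mathbb{P}\div(u(\tau) \otimes v(\tau))\,d\tau = \int_{t_0}^t K(t-\tau, \cdot) \ast (u \otimes v)(\tau)\, d\tau,
\end{equation*}
where the Oseen tensor kernel $K$ obeys the classical pointwise bound $|K(s, x)| \leq C(|x|^2+s)^{-(n+1)/2}$. O'Neil's version of H\"{o}lder's inequality in Lorentz spaces gives $\|(u\otimes v)(\tau)\|_{L^{n/2,\infty}} \leq C\|u(\tau)\|_{L^{n,\infty}}\|v(\tau)\|_{L^{n,\infty}}$, so the task reduces to bounding the time-integrated convolution of $K$ against an $L^\infty_\tau L^{n/2,\infty}_y$ function in the norm $L^\infty_t L^{n,\infty}_x$.

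The main obstacle is that a na\"{\i}ve Minkowski plus Young-in-Lorentz argument gives $\|K(s)\|_{L^{n/(n-1),1}} \sim s^{-1}$, and $\int_{t_0}^t (t-\tau)^{-1}\,d\tau$ is logarithmically divergent. This is precisely the reason the weak space $L^{n,\infty}$ (rather than $L^n$) is forced upon us at the critical scaling. To overcome it, I would invoke bilinear real interpolation: choose exponents $p_1 < n < p_2$ with $1/n = (1-\theta)/p_1 + \theta/p_2$, so that $L^{n,\infty} = (L^{p_1}, L^{p_2})_{\theta, \infty}$ (\emph{real} interpolation). For each $p_i \neq n$, the composed semigroup bound $\|e^{s\Delta}\mathbb{P}\div\|_{L^{p_i/2}\to L^{p_i}} \lesssim s^{-1/2-n/(2p_i)}$, combined with H\"{o}lder in the spatial variable, yields a bilinear estimate on the Lebesgue scale with an integrable time weight near one endpoint of the $\tau$-interval (the side determined by whether $p_i>n$ or $p_i<n$). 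A Marcinkiewicz-type bilinear interpolation between these two reflexive estimates then recovers the weak endpoint estimate, the logarithmic divergence being absorbed into the quasi-norm of $L^{n,\infty}$.

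An equivalent and perhaps more transparent route uses the duality $L^{n,\infty} = (L^{n/(n-1),1})^\ast$, estimating
\begin{equation*}
\left\langle B(u,v)(t), \varphi \right\rangle = \int_{t_0}^t \int_{\mathbb{R}^n} (u\otimes v)(\tau, y)\, (K(t-\tau)\ast \varphi)(y)\,dy\,d\tau
\end{equation*}
by exploiting the strong Lorentz regularity of the test function $\varphi \in L^{n/(n-1),1}$ to cancel the boundary divergence of $\|K(s)\|_{L^{n/(n-1),1}}$, after a suitable real-interpolation splitting of $u\otimes v$ in $L^{n/2,\infty}$. In either formulation, the hard step is the bilinear real-interpolation argument that bridges the reflexive range $p \neq n$ and the non-reflexive endpoint $p = n$; this step crucially uses the weak-type structure of $L^{n,\infty}$ and cannot be replaced by standard $L^p$-based techniques. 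This is exactly the content of Yamazaki's original argument in \cite{Yam-00}.
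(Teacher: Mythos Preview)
The paper does not give its own proof of this lemma: it is stated with the attribution \cite{Yam-00} and used as a black box. There is therefore nothing in the paper to compare your argument against. Your sketch is a reasonable outline of the standard route to this estimate, and you correctly identify the essential difficulty (the logarithmic divergence at the scaling-critical exponent) and the essential remedy (real interpolation / Lorentz duality to exploit the weak-$L^n$ structure). In particular, your duality formulation via $L^{n,\infty}=(L^{n/(n-1),1})^\ast$ is close in spirit to Yamazaki's actual argument, which hinges on an $L^{q,1}\to L^1_t L^{p,1}_x$ smoothing estimate for $e^{t\Delta}\mathbb{P}\div$ and then pairs against $u\otimes v\in L^\infty_t L^{n/2,\infty}_x$. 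Your first route, ``bilinear Marcinkiewicz between two reflexive Lebesgue estimates,'' is vaguer and would need care to make rigorous, since genuine bilinear real interpolation theorems require hypotheses that are not obviously met here; the duality route is the clean one.
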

Now, we prove Proposition \ref{prop:nD}.
\begin{proof}[Proof of Proposition \ref{prop:nD}]
We first consider the case \eqref{case1} by separating the argument into five steps.
%%%%%%%%%%%%%%%%%

\noindent
{\it Step 1. The external force.}
%%%%%%%%%%%%%%%%%
Let $0 < \delta \leq \eta \leq 1/100$.
Let $\widetilde{\chi} \in C^{\infty}(\mathbb{R})$ satisfy
\begin{align}
    \widetilde{\chi}(\tau)=
    \begin{cases}
        1 & (\tau \geq h), \\
        0 & (\tau \leq h/2 ),
    \end{cases}
\end{align}
where $0<h\leq 1$ is a pure constant to be determined later,
and put 
\begin{align}
    \chi(t)
    :=
    \widetilde{\chi}\sp{t-t_0}
    \widetilde{\chi}\sp{t_0+T_*-t},
\end{align}
where $T_* \geq 1$ is a constant to be determined later.
Let $\psi\in\mathscr{S}(\mathbb{R}^n)$ be a Schwartz function such that
\begin{align}
    \supp{\widehat\psi} \subset \left\{\xi\in\mathbb{R}^n\ ;\ |\xi|\leq 1\right\},
    \quad
    \widehat{\psi}>0\ {\rm in}\ \left\{\xi\in\mathbb{R}^n\ ;\ |\xi|< 1\right\},
\end{align}
and set
\begin{align}\label{eq:Psi}
    \Psi(x):=\partial_{x_3}\psi(x)e_2-\partial_{x_2}\psi(x)e_3,
\end{align}
where $e_j$ denotes a unit vector along the $x_j$-axis.
Note that $\Psi(x)\phi(x_1)$ is solenoidal for all smooth function $\phi(x_1)$.
Let us define
\begin{align}
    f_{\delta,\eta}(t,x)
    :={}&
    \chi(t)
    g_{\delta,\eta}(x),
\end{align}
where we have set 
\begin{align}
    g_{\delta,\eta}(x):=
    \eta
    \Delta
    \left[
    \Psi(x)
    \cos\sp{\delta^{-\frac{r}{r-n}}x_1}
    \right].
\end{align}
By using Lemma \ref{lemm:cos}, we see that
\begin{align}
   &\n{f_{\delta,\eta}}_{\widetilde{L^\infty}(t_0,\infty;\dB^{-2}_{n,2})}
    \leq
    C\n{g_{\delta,\eta}}_{\dB^{-2}_{n,2}}
    \leq
    C\eta,\\
    &
    \n{f_{\delta,\eta}}_{\widetilde{L^\infty}(t_0,\infty;\dB^{n/r-3}_{r,\sigma})}
    \leq
    C \n{g_{\delta,\eta}}_{\dB^{n/r-3}_{r,\sigma}}
    \leq
    C\eta\delta.
\end{align}
%%%%%%%%%%%%%%%%%

\noindent
{\it Step 2. The first iteration.}
%%%%%%%%%%%%%%%%%
Let us consider the first iteration $u_{\delta,\eta}^{(1)}$ generated by the initial data $a$ and the external force $f_{\delta,\eta}$ defined as 
\begin{align}
    u_{\delta,\eta}^{(1)}(t)
    :={}&
    e^{(t-t_0)\Delta}a
    +
    \int_{t_0}^t
    e^{(t-\tau)\Delta}f_{\delta,\eta}(\tau)d\tau.
\end{align}
Then we see by Lemma \ref{lemm:linear} and the previous step that
\begin{align}
    &
    \n{u_{\delta,\eta}^{(1)}}_{L^{\infty}(t_0,\infty;L^{n,\infty})}
    \leq
    C\left(\n{a}_{L^{n,\infty}}+
    \n{g_{\delta,\eta}}_{\dB^{-2}_{n,2}}\right)
    \leq
    C\eta, \label{eq:u1Ln}\\
    &\n{u_{\delta,\eta}^{(1)}}_{\widetilde{L^{\infty}}(t_0,\infty;\dB_{r,\sigma}^{n/r-1})}
    \leq 
    C\left(\n{a}_{\dB^{n/r-1}_{r,\sigma}}
    +
    \n{g_{\delta,\eta}}_{\dB_{r,\sigma}^{n/r-3}}\right)
    \leq
    C\eta^3. \label{eq:u1Bp}
    \end{align}
Moreover, we have $u^{(1)}_{\delta,\eta}\in \widetilde{L^{\rho}}(t_0,\infty;\dB_{r,\sigma}^{n/r-1+2/\rho}(\mathbb{R}^n))$, which follows from
    \begin{align}
    \n{u_{\delta,\eta}^{(1)}}_{\widetilde{L^{\rho}}(t_0,\infty;\dB_{r,\sigma}^{n/r-1+2/\rho})}
    &
    \leq
    C\left(
    \n{a}_{\dB^{n/r-1}_{r,\sigma}}
    +
    \n{g_{\delta,\eta}}_{\dB_{r,\sigma}^{n/r-1+2/\rho}}
    \n{\chi}_{L^{\rho}(t_0,\infty)}
    \right)
    \\
    &\leq
    C
    \eta
    \delta^{1-\frac{2r}{\rho(r-n)}}
    T_*^{\frac{1}{\rho}}
    <\infty.\label{eq:u1Lr}
\end{align}
%%%%%%%%%%%%%%%%%

\noindent
{\it Step 3. The second iteration.}
%%%%%%%%%%%%%%%%%
Next, we focus on the estimates of the second iteration
\begin{align}
    u_{\delta,\eta}^{(2)}(t,x)
    :={}&
    -
    \int_{t_0}^t
    e^{(t-\tau)\Delta}
    \mathbb{P}\div\sp{u_{\delta,\eta}^{(1)}(\tau) \otimes u_{\delta,\eta}^{(1)}(\tau)}(x)
    d\tau.
\end{align}
By Lemma \ref{lemm:Yamazaki}, we see that
\begin{align}\label{eq:u2Ln}
    \n{u_{\delta,\eta}^{(2)}}_{L^{\infty}(t_0,\infty;L^{n,\infty})}
    \leq
    C\n{u_{\delta,\eta}^{(1)}}_{L^{\infty}(t_0,\infty;L^{n,\infty})}^2
    \leq C\eta^2. 
\end{align}
We decompose $u_{\delta,\eta}^{(2)}(T_*+t_0)$ as
\begin{align}
    \quad &u_{\delta,\eta}^{(2)}(t_0+T_*)\\
    &=
    -
    \int_{t_0}^{t_0+T_*}
    e^{(t_0+T_*-\tau)\Delta}
    \mathbb{P}\div\sp{u_{\delta,\eta}^{(1)}(\tau) \otimes u_{\delta,\eta}^{(1)}(\tau)}
    d\tau\\
    &=-\left(\int_{t_0}^{t_0+h}+\int_{t_0+h}^{t_0+T_*-h}+\int_{t_0+T_*-h}^{t_0+T_*}\right)
    e^{(t_0+T_*-\tau)\Delta}
    \mathbb{P}\div\sp{u_{\delta,\eta}^{(1)}(\tau) \otimes u_{\delta,\eta}^{(1)}(\tau)}
    d\tau\\
    &=: I_1+I_2+I_3.
\end{align}
In what follows, let us estimate $I_1, I_2$, and $I_3$.

We first consider the estimate of $I_2$. 
For $t_0+h\leq t \leq t_0+T_*-h$, $u_{\delta,\eta}^{(1)}$ can be written as
\begin{align}
u_{\delta,\eta}^{(1)}(t)
    &=
    e^{(t-t_0)\Delta}a
    +
    e^{(t-t_0-h)\Delta}\int_{t_0}^{t_0+h}e^{(t_0+h-\tau)\Delta}f_{\delta,\eta}(\tau)d\tau
    +
    \int_{t_0+h}^t
    e^{(t-\tau)\Delta}g_{\delta,\eta}d\tau\\
    &=: u^{(1;1)}_{\delta,\eta}(t)+u_{\delta,\eta}^{(1;2)}(t)
   +\int_{t_0+h}^t
    e^{(t-\tau)\Delta}g_{\delta,\eta}d\tau.
\end{align}
The last term can be calculated as
\begin{align}
    \int_{t_0+h}^t
    e^{(t-\tau)\Delta}g_{\delta,\eta}d\tau
    &=(-\Delta)^{-1}(1-e^{(t-t_0-h)\Delta})g_{\delta,\eta}\\
    &=
   -e^{(t-t_0-h)\Delta}
    (-\Delta)^{-1}g_{\delta,\eta}
    +
    \eta \Psi(x)\cos\sp{\delta^{-\frac{r}{r-n}}x_1}\\
    &=: u_{\delta,\eta}^{(1;3)}(t)
    +u_{\delta,\eta}^{(1;4)}.
\end{align}
Hence $I_2$ can be described as
\begin{align}
    I_2
    &=
    -\int_{t_0+h}^{t_0+T_*-h}
    e^{(t_0+T_*-\tau)\Delta}
    \sum_{1\leq k,\ell\leq 4}
    \mathbb{P}\div\sp{u_{\delta,\eta}^{(1;k)} \otimes u_{\delta,\eta}^{(1;\ell)}}
    d\tau\\
    &=
    -\int_{t_0+h}^{t_0+T_*-h}
    e^{(t_0+T_*-\tau)\Delta}
    \mathbb{P}\div\sp{u_{\delta,\eta}^{(1;4)}\otimes u_{\delta,\eta}^{(1;4)}}
    d\tau\\
    &\qquad
    -\int_{t_0+h}^{t_0+T_*-h}
    e^{(t_0+T_*-\tau)\Delta}
    \sum_{\substack{1\leq k,\ell\leq 4 \\(k,\ell)\neq (4,4)}}
    \mathbb{P}\div\sp{u_{\delta,\eta}^{(1;k)} \otimes u_{\delta,\eta}^{(1;\ell)}}
    d\tau\\
    &=: I_{2;1}+I_{2;2}.
\end{align}
We first focus on $I_{2;1}$.
Since $u^{(1;4)}_{\delta,\eta}$ is stable for $t\in[t_0+h,t_0+T_*-h]$, we have
\begin{align}
    I_{2;1}=-(e^{h\Delta}-e^{(T_*-h)\Delta})(-\Delta)^{-1}\mathbb{P}\div\sp{u^{(1;4)}_{\delta,\eta}\otimes u^{(1;4)}_{\delta,\eta}}.
\end{align}
Moreover, it holds
\begin{align}
    \div\sp{u^{(1;4)}_{\delta,\eta}(x) \otimes u^{(1;4)}_{\delta,\eta}(x)}
    &
    ={}
    \eta^2
    \Phi(x)
    \left(
    1-
    \cos\sp{2\delta^{-\frac{r}{r-n}}x_1}
    \right),
\end{align}
where
\begin{align}\label{eq:Phi}
\Phi=\frac12\left\{\partial_{x_2}(\psi_{x_3})^2-\partial_{x_3}(\psi_{x_2}\psi_{x_3})\right\}e_2
+\frac12\left\{-\partial_{x_2}(\psi_{x_2}\psi_{x_3})+\partial_{x_3}(\psi_{x_2})^2\right\}e_3.
\end{align}
Thus, we see that 
\begin{align}
    I_{2;1}
    ={}&
    -
    \eta^2
    \sp{e^{h\Delta}-e^{(T_*-h)\Delta}}
    (-\Delta)^{-1}
    \mathbb{P}\lp{\Phi
    \left(
    1-
    \cos\sp{2\delta^{-\frac{r}{r-n}}x_1}
    \right)}.
\end{align}
Since $\delta^{-\frac{r}{r-n}}$ is large enough, we have
\begin{align}
    \Delta_j I_{2;1}=-
    \eta^2
    \Delta_j
    \sp{e^{h\Delta}-e^{(T_*-h)\Delta}}
    (-\Delta)^{-1}
    \mathbb{P}\Phi
\end{align}
for every $j\leq 2$. By \cite{Tsu-19}*{Section 3.2}, we see that
\begin{align}
    c_*
    :=
    \frac{1}{5}
    \sup_{j\leq 2}
    2^{-j}
    \n{
    \Delta_j (-\Delta)^{-1}\mathbb{P}\Phi
    }_{L^\infty}
    =
    \frac{1}{5}
    \n{(-\Delta)^{-1}\mathbb{P}\Phi}_{\dB_{\infty,\infty}^{-1}} >0.
\end{align}
In addition, it holds
\begin{align}
    2^{-j}
    \n{
    \Delta_j(1-e^{h\Delta})(-\Delta)^{-1}\mathbb{P}\Phi
    }_{L^\infty}
    &\leq (1-e^{-c2^{2j}h})2^{-j}\n{\Delta_j(-\Delta)^{-1}\mathbb{P}\Phi}_{L^\infty}\\
    &\leq 5c_*(1-e^{-ch})
\end{align}
for every $j\leq 2$, which yields the existence of a pure positive constant $h_1$ such that
\begin{align}
    &\sup_{j\leq2}
    2^{-j}\n{\Delta_j e^{h\Delta}(-\Delta)^{-1}\mathbb{P}\Phi}_{L^\infty}\\
    &\quad \geq
    \sup_{j\leq2}
    2^{-j}\n{\Delta_j (-\Delta)^{-1}\mathbb{P}\Phi}_{L^\infty}
    -
    \sup_{j\leq2}
    2^{-j}\n{\Delta_j (1-e^{h\Delta})(-\Delta)^{-1}\mathbb{P}\Phi}_{L^\infty}\\
    &\quad\geq 4c_*
\end{align}
for any $0<h<h_1$. On the other hand, since $n\geq 3$, we see that
\begin{align}
    \|(-\Delta)^{-1}\mathbb{P}\Phi\|_{\dB^{n/r-1}_{r,\sigma}}
    &\leq C\|\nabla \psi \otimes \nabla \psi\|_{\dB^{n/r-2}_{r,\sigma}}\\
    &\leq C\|\nabla \psi \otimes \nabla \psi\|_{\dB^{n/2-2}_{2,\sigma}}\\
    &\leq C\delta^2\|\nabla\psi\|^2_{\dB^{n/2-1}_{2,\sigma}}.
\end{align}
Therefore, using the density argument, we have
\begin{align}
    &
    \lim_{t \to \infty}
    \n{e^{t\Delta}(-\Delta)^{-1}\mathbb{P}\Phi}_{\dB_{r,\sigma}^{n/r-1}}
    =0,
\end{align}
which yields the existence of $T_*=T_*(n,r,\sigma) > 0$ such that 
\begin{align}
    \n{e^{(T_*-h)\Delta}(-\Delta)^{-1}\mathbb{P}\Phi}_{\dB_{r,\sigma}^{n/r-1}}
    \leq
    \frac{c_*}{2\n{\varphi_0}_{L^{r'}}},
\end{align}
where $\varphi_0 \in \mathscr{S}(\mathbb{R}^n)$ is the frequency cut-off function appearing in the Littlewood--Paley theory. 
Hence, we have
\begin{align}\label{eq:I_21}
    &\sup_{j \leq 2}
    2^{-j}
    \n{\Delta_j I_{2;1}}_{L^{\infty}}
    \\
    &\quad= {}
    \eta^2
    \sup_{j \leq 2}
    2^{-j}
    \n{\Delta_j \sp{e^{h\Delta}-e^{(T_*-h)\Delta}}(-\Delta)^{-1}\mathbb{P}\Phi}_{L^{\infty}}
    \\
    &\quad\geq 
    \eta^2\sup_{j\leq2}
    2^{-j}\n{\Delta_j e^{h\Delta}(-\Delta)^{-1}\mathbb{P}\Phi}_{L^\infty}
    -\eta^2 \n{e^{(T*-h)\Delta}(-\Delta)^{-1}\mathbb{P}\Phi}_{\dB^{-1}_{\infty,\infty}}
    \\
    & \quad \geq
    \eta^2
    \n{e^{h\Delta}(-\Delta)^{-1}\mathbb{P}\Phi}_{\dB_{\infty,\infty}^{-1}}
    -
    \eta^2
    \n{\varphi_0}_{L^{r'}}
    \n{e^{(T_*-h)\Delta}(-\Delta)^{-1}\mathbb{P}\Phi}_{\dB_{r,\sigma}^{n/r-1}}
    \\
    & \quad \geq
    3c_*\eta^2
\end{align}
with $0<h<h_1$.

Next we focus on $I_{2;2}$. 
By using Lemma \ref{cor:nonlin-ndim}, we have the following apriori estimate;
\begin{align}
    &\n{\int_{t_0+h}^{t}e^{(t_0+T_*-\tau)\Delta}\mathbb{P}\div(u(\tau)\otimes v(\tau))d\tau}_{\widetilde{{L^\infty}}(t_0+h,\infty;\dB^{n/r-1}_{r,\sigma})}\\
    &\quad\leq{}
    C
    \n{
    u
    }_{{L^{\infty}}(t_0+h,\infty;L^{n,\infty})}
    \n{
    v
    }_{\widetilde{L^{\rho}}(t_0+h,\infty;\dB_{r,\sigma}^{n/r-1+2/\rho})}
\end{align}
for all $u=u(t,x)$ and $v=v(t,x)$ such that the right-hand side is finite. 
Moreover, we see from \eqref{eq:u1Ln} and Lemma \ref{lemm:linear} that
\begin{align}
   &\n{u^{(1;k)}_{\delta,\eta}}_{L^\infty(t_0+h,\infty;L^{n,\infty})}\leq C\eta,\qquad k=1,2,3,4,\\
   &\n{u^{(1;1)}_{\delta,\eta}}_{\widetilde{L^{\rho}}(t_0+h,\infty;\dB^{n/r-1+2/\rho}_{r,\sigma})}
   \leq C\|a\|_{\dB^{n/r-1}_{r,\sigma}}
   \leq C\eta^3,\\
   &
   \begin{aligned}
          \n{u^{(1;2)}_{\delta,\eta}}_{\widetilde{L^{\rho}}(t_0+h,\infty;\dB^{n/r-1+2/\rho}_{r,\sigma})}
   &\leq 
   C\n{\int_{t_0}^{t_0+h}e^{(t_0+h-\tau)\Delta}f_{\delta,\eta}(\tau)d\tau}_{\dB^{n/r-1}_{r,\sigma}}\\
   &\leq C\|f_{\delta,\eta}\|_{\widetilde{L^\infty}(t_0,\infty;\dB^{n/r-3}_{r,\sigma})}\\
   &\leq C\delta\eta,
   \end{aligned}\\
   &\n{u^{(1;3)}_{\delta,\eta}}_{\widetilde{L^{\rho}}(t_0+h,\infty;\dB^{n/r-1+2/\rho}_{r,\sigma})}
   \leq C\n{g_{\delta,\eta}}_{\dB^{n/r-3}_{r,\sigma}}
   \leq C\delta\eta.
\end{align}
Therefore, we obtain
\begin{align}\label{eq:I_22}
    &\n{I_{2;2}}_{\dB^{n/r-1}_{r,\sigma}}\\
    &\quad
    \leq
    \sum_{\substack{1\leq k,\ell\leq 4 \\(k,\ell)\neq (4,4)}}
    \n{\int_{t_0+h}^{t}
    e^{(t_0+T_*-\tau)\Delta}
    \mathbb{P}\div\sp{u_{\delta,\eta}^{(1;k)} \otimes u_{\delta,\eta}^{(1;\ell)}}
    d\tau}_{\widetilde{{L^\infty}}(t_0+h,\infty;\dB^{n/r-1}_{r,\sigma})} \\
    &\quad \leq
    C(\delta\eta^2+\eta^4)\\
    &\quad \leq C\eta^4.
\end{align}

Next, we focus on the estimate of $I_1$ and $I_3$.
For each integer $j\leq 2$, we see that
\begin{align}
 &2^{-j}\n{\Delta_j I_1}_{L^\infty}\\
 &\quad=
 2^{-j}\n{\Delta_j \int_{t_0}^{t_0+h}e^{(t_0+T_*-\tau)\Delta}\mathbb{P}\div (u_{\delta,\eta}^{(1)}(\tau) \otimes u_{\delta,\eta}^{(1)}(\tau))d\tau}_{L^\infty}\\
 &\quad\leq
 C2^{-j}\int_{t_0}^{t_0+h}e^{-c2^{2j}(t_0+T_*-\tau)}2^j d\tau
 \sup_{t_0\leq\tau\leq t_0+h}\n{\Delta_j (u_{\delta,\eta}^{(1)}(\tau) \otimes u_{\delta,\eta}^{(1)}(\tau))}_{L^\infty}\\
 &\quad\leq
 Ce^{-c2^{2j}T_*}(e^{c2^{2j}h}-1)
 \sup_{t_0\leq\tau\leq t_0+h}2^{-2j}
 \n{\Delta_j (u_{\delta,\eta}^{(1)}(\tau) \otimes u_{\delta,\eta}^{(1)}(\tau))}_{L^\infty}\\
 &\quad\leq
 C(e^{ch}-1)\sup_{t_0\leq\tau\leq t_0+h}
 \n{u_{\delta,\eta}^{(1)}(\tau) \otimes u_{\delta,\eta}^{(1)}(\tau)}_{\dB^{-2}_{\infty,\infty}}\\
 &\quad\leq
 C(e^{ch}-1)
 \sup_{t_0\leq\tau\leq t_0+h}
 \n{u_{\delta,\eta}^{(1)}(\tau) \otimes u_{\delta,\eta}^{(1)}(\tau)}_{L^{n/2,\infty}}\\
 &\quad\leq
 C(e^{ch}-1) \n{u_{\delta,\eta}^{(1)}}_{L^{\infty}(t_0,\infty;L^{n,\infty})}^2\\
 &\quad\leq
 C\eta^2(e^{ch}-1).
\end{align}
We can also obtain a similar estimate for $I_3$.
Hence, there exists a pure positive constant $h_2$ such that
\begin{align} \label{eq:I_13}
    \sup_{j\leq 2}
    2^{-j}
    \sp{
    \n{\Delta_j I_1}_{L^\infty}
    +
    \n{\Delta_j I_3}_{L^\infty}
    }
    \leq c_*\eta^2
\end{align}
for any $0<h<h_2$.

Now we fix $h$ as $0<h<\min \{h_1,h_2\}$. Using the estimates \eqref{eq:I_21}, \eqref{eq:I_22}, and \eqref{eq:I_13}, we finally obtain the lower bound of $u^{(2)}_{\delta,\eta}(t_0+T_*)$ as
\begin{align}\label{eq:u2lower}
    &\n{u^{(2)}_{\delta,\eta}(t_0+T_*)}_{\dB^{-1}_{\infty,\infty}}\\
    &\quad\geq
    \sup_{j \leq 2}
    2^{-j}
    \n{\Delta_j u^{(2)}_{\delta,\eta}(t_0+T_*)}_{L^{\infty}}\\
    &\quad\geq
    \sup_{j \leq 2}
    2^{-j}
    \n{\Delta_j I_{2;1}}_{L^{\infty}}
    -\sup_{j \leq 2}
    2^{-j}
    \sp{
    \n{\Delta_j I_{1}}_{L^{\infty}}
    +
    \n{\Delta_j I_{3}}_{L^{\infty}}
    }
    -\n{I_{2;2}}_{\dB^{n/r-1}_{r,\sigma}}
    \\
    &\quad\geq
    3c_*\eta^2-c_*\eta^2-C\eta^4\\
    &\quad\geq c_*\eta^2.
\end{align}
by taking the sufficiently constant $\eta_0>0$.
%%%%%%%%%%%%%%%%%

\noindent
{\it Step 4. Construction of desired solutions.}
%%%%%%%%%%%%%%%%%
Now, let us construct the desired solution as the form 
$u_{\delta,\eta}=u_{\delta,\eta}^{(1)}+u_{\delta,\eta}^{(2)}+\widetilde{u}_{\delta,\eta}$,
where the remainder term $\widetilde{u}_{\delta,\eta}$ should solve
\begin{align} \label{eq:NS-tilde}
    \begin{cases}
        \begin{aligned}
        \partial_t \widetilde{u} & - \Delta \widetilde{u} 
        + 
        \displaystyle
        \sum_{1 \leq k,\ell \leq 2,\ (k,\ell) \neq (1,1)}
        \mathbb{P}\div
        \left(
        u_{\delta,\eta}^{(k)}
        \otimes
        u_{\delta,\eta}^{(\ell)}
        \right)
        \\
        &
        +
        \sum_{k=1}^2
        \mathbb{P}\div
        \left(
        u_{\delta,\eta}^{(k)}
        \otimes
        \widetilde{u}
        +
        \widetilde{u}
        \otimes
        u_{\delta,\eta}^{(k)}
        \right)
        +
        \mathbb{P}\div \sp{\widetilde{u} \otimes \widetilde{u}} = 0, 
        \end{aligned}
        \\
        \div \widetilde{u} = 0, \\
        \widetilde{u}(t_0,x)=0.
    \end{cases}
\end{align}
To simplify the notations, we set 
    \begin{align}
        X
        &
        :=
        L^{\infty}(t_0,\infty;L^{n,\infty}(\mathbb{R}^n)),
        \\
        Y_{r,\sigma,\rho}
        &
        :=
        \widetilde{C}([t_0,\infty);\dB_{r,\sigma}^{n/r-1}(\mathbb{R}^n)) \cap \widetilde{L^{\rho}}(t_0,\infty;\dB_{r,\sigma}^{n/r-1+2/\rho}(\mathbb{R}^n)).
    \end{align}
    It follows from \eqref{eq:u1Ln}, \eqref{eq:u1Bp}, \eqref{eq:u1Lr}, \eqref{eq:u2Ln}, and Lemmas \ref{lemm:nonlin-ndim}, \ref{cor:nonlin-ndim}, and \ref{lemm:Yamazaki} that
    there exists a positive constant $C_0=C_0(n,p,q,r)$ such that 
    \begin{align}
        &
        \n{u_{\delta,\eta}^{(k)}}_{X}
        \leq C_0\eta^k,
        \\
        &
        \begin{aligned}
        \n{u_{\delta,\eta}^{(k)}}_{Y_{r,\sigma,\rho}}
        &
        \leq
        \sp{C\n{u_{\delta,\eta}^{(1)}}_X}^{k-1}
        \n{u_{\delta,\eta}^{(1)}}_{Y_{r,\sigma,\rho}}
        \\
        &\leq
        C_0
        \eta^{k}
        \delta^{1-\frac{2r}{\rho(r-n)}}
    T_*^{\frac{1}{\rho}}
        \end{aligned}
    \end{align}
    for $k=1,2$
    and
    \begin{align}
        &
        \n{\int_{t_0}^t e^{(t-\tau)\Delta}\mathbb{P}\div(u \otimes v)(\tau)d\tau}_{X}
        \leq{}
        C_0
        \n{u}_X
        \n{v}_X,
        \\
        &
        \n{\int_{t_0}^t e^{(t-\tau)\Delta}\mathbb{P}\div(u \otimes v)(\tau)d\tau}_{Y_{r,\sigma,\rho}}
        \leq{}
        C_0
        \n{u}_{X}
        \n{v}_{Y_{r,\sigma,\rho}}
    \end{align}
    for all $u,v \in X \cap Y_{r,\sigma,\rho}$.
    Let us consider the mapping 
    \begin{align}
        \Psi[v](t)
        :={}&
        -
        \sum_{1 \leq k,\ell \leq 2,\ (k,\ell) \neq (1,1)}
        \int_{t_0}^t
        e^{(t-\tau)\Delta}
        \mathbb{P}\div
        \left(
        u_{\delta,\eta}^{(k)}
        \otimes
        u_{\delta,\eta}^{(\ell)}
        \right)(\tau)
        d\tau 
        \\
        &
        -
        \sum_{k=1}^2
        \int_{t_0}^t
        e^{(t-\tau)\Delta}
        \mathbb{P}\div
        \left(
        u_{\delta,\eta}^{(k)}
        \otimes
        v
        +
        v
        \otimes
        u_{\delta,\eta}^{(k)}
        \right)(\tau)
        d\tau 
        \\
        &
        -
        \int_{t_0}^t
        e^{(t-\tau)\Delta}
        \mathbb{P}
        \div(v \otimes v)(\tau)
        d\tau
    \end{align}
    on a complete metric space 
    \begin{align}
        &
        Z_{r,\sigma,\rho}
        :=
        \Mp{
        v \in X \cap Y_{r,\sigma,\rho}
        \ ;\ 
        \n{v}_{X}
        \leq 
        6C_0^3
        \eta^3,\ 
        \n{v}_{Y_{r,\sigma,\rho}}
        \leq 
        6C_0^3
        \eta^3
        \delta^{2-\frac{4r}{\rho(r-n)}}
        T_*^{\frac{2}{\rho}}
        },
        \\
        &
        d_{Z_{r,\sigma,\rho}}(u,v)
        :=
        \n{u-v}_{X \cap Y_{r,\sigma,\rho}}.
    \end{align}
    In the following, we assume that $\eta$ is so small that 
    \begin{align}
        4C_0^2\eta
        +
        24C_0^4\eta^3
        \leq
        1.
    \end{align}
    We then see by the above estimates that 
    \begin{align}
        \n{\Psi[v]}_X
        \leq {}&
        C_0
        \sum_{1 \leq k,\ell \leq 2,\ (k,\ell) \neq (1,1)}
        \n{u_{\delta,\eta}^{(k)}}_X
        \n{u_{\delta,\eta}^{(\ell)}}_X
        \\
        &
        +
        2
        C_0
        \sum_{k=1}^2
        \n{u_{\delta,\eta}^{(k)}}_X
        \n{v}_X
        +
        C_0\n{v}_X^2
        \\
        \leq{}& 
        3C_0^3\eta^3
        +
        12C_0^5\eta^4
        +
        36C_0^7\eta^6\\
        \leq{}&
        6C_0^3\eta^3,
        \\
        %%%%%%%%%%%%%%%%%%%%%%%
        %%%%%%%%%%%%%%%%%%%%%%%
        \n{\Psi[v]}_{Y_{r,\sigma,\rho}}
        \leq{}& 
        C_0
        \sum_{1 \leq k,\ell \leq 2,\ (k,\ell) \neq (1,1)}
        \n{u_{\delta,\eta}^{(k)}}_X
        \n{u_{\delta,\eta}^{(\ell)}}_{Y_{r,\sigma,\rho}}
        \\
        &
        +
        2
        C_0
        \sum_{k=1}^2
        \n{u_{\delta,\eta}^{(k)}}_X
        \n{v}_{Y_{r,\sigma,\rho}}
        +
        C_0\n{v}_X\n{v}_{Y_{r,\sigma,\rho}}
        \\
        \leq{}& 
        3C_0^3\eta^3
        \delta^{2-\frac{4r}{\rho(r-n)}}
        T_*^{\frac{2}{\rho}}\\
        &
        +
        6C_0^5\eta^4
        \delta^{2-\frac{4r}{\rho(r-n)}}
        T_*^{\frac{2}{\rho}}
        +
        36C_0^7\eta^6
        \delta^{2-\frac{4r}{\rho(r-n)}}
        T_*^{\frac{2}{\rho}}
        \\
        \leq{}& 
        6C_0^3\eta^3
        \delta^{2-\frac{4r}{\rho(r-n)}}
        T_*^{\frac{2}{\rho}},
    \end{align}
    and 
    \begin{align}
        \n{\Psi[v]-\Psi[w]}_{X\cap Y_{r,\sigma,\rho}}
        &
        \leq
        2
        C_0
        \sum_{k=1}^2
        \n{u_{\delta,\eta}^{(k)}}_X
        \n{v-w}_{X \cap Y_{r,\sigma,\rho}}
        \\
        &\quad
        +
        C_0\sp{\n{v}_X+\n{w}_X}\n{v-w}_{X\cap Y_{r,\sigma,\rho}}
        \\
        &
        \leq
        (2C_0\eta+12C_0^4\eta^3)\n{v-w}_{X\cap Y_{r,\sigma,\rho}}
        \\
        &
        \leq
        \frac{1}{2}\n{v-w}_{X\cap Y_{r,\sigma,\rho}}
    \end{align}
    for all $v,w \in Z_{r,\sigma,\rho}$.
    Hence, the contraction mapping principle yields the unique existence of solution $\widetilde{u}_{\delta,\eta} \in Z_{r,\sigma,\rho}$ to \eqref{eq:NS-tilde}.

    By \eqref{eq:u1Ln}, \eqref{eq:u1Bp}, \eqref{eq:u2Ln}, \eqref{eq:u2lower}, and the definition of $Z_{r,\sigma,\rho}$,
    we can choose a constant $K_0=K_0(n,r,\sigma,\rho)>0$ such that for the solution $u_{\delta,\eta}=u^{(1)}_{\delta,\eta}+u^{(2)}_{\delta,\eta}+\widetilde{u}_{\delta,\eta}$ constructed above,
    \begin{align}
        &\n{u_{\delta,\eta}}_{L^\infty(t_0,\infty;L^{n,\infty})}\\
        &\quad\leq
        \n{u^{(1)}_{\delta,\eta}}_{L^\infty(t_0,\infty;L^{n,\infty})}
        +
        \n{u^{(2)}_{\delta,\eta}}_{L^\infty(t_0,\infty;L^{n,\infty})}
        +
        \n{\widetilde u_{\delta,\eta}}_{L^\infty(t_0,\infty;L^{n,\infty})}\\
        &\quad\leq K_0\eta,
    \end{align}
    while
    \begin{align}
        &\n{u_{\delta,\eta}(t_0+T_*)}_{\dB^{-1}_{\infty,\infty}}\\
        &\quad \geq 
        \n{u^{(2)}_{\delta,\eta}(t_0+T_*)}_{\dB^{-1}_{\infty,\infty}}
        -
        \n{u^{(1)}_{\delta,\eta}}_{\widetilde{L}^\infty(t_0,\infty;\dB^{n/r-1}_{r,\sigma})}
        -
        \n{\widetilde{u}_{\delta,\eta}}_{L^\infty(t_0,\infty;L^{n,\infty})}\\
        &\quad \geq c_*\eta^2-C\eta^3-6C_0^3\eta^3 \\
        &\quad\geq K_0^{-1}\eta^2.
    \end{align}
    %%%%%%%%%%%%%%%%%

    \noindent
    {\it Step 5. Temporal decay of the solution.}
    %%%%%%%%%%%%%%%%%
    We see for $t>\tilde{T}>T>t_0+T_*$ that\footnote{Note that $\int_T^t e^{(t-\tau)\Delta} \mathbb{P}\div f_{\delta,\eta}(\tau)d\tau=0$ holds due to the support of $f_{\delta,\eta}$ for the time variable.} 
    \begin{align}
        u_{\delta,\eta}(t)
        =
        e^{(t-T)\Delta}u_{\delta,\eta}(T)
        -
        \int_T^t
        e^{(t-\tau)\Delta}
        \mathbb{P}
        \div
        (u_{\delta,\eta} \otimes u_{\delta,\eta})(\tau)
        d\tau,
    \end{align}
    which and Lemmas \ref{lemm:nonlin-ndim}, \ref{cor:nonlin-ndim} imply 
    \begin{align}
        &
        \n{u_{\delta,\eta}}_{\widetilde{L^{\infty}}(\tilde{T},\infty;\dB_{r,\sigma}^{n/r-1})}
        \\
        &\quad
        \leq
        \n{e^{(t-T)\Delta}u_{\delta,\eta}(T)}_{\widetilde{L^{\infty}}(\tilde{T},\infty;\dB_{r,\sigma}^{n/r-1})}
        +
        C
        \n{u_{\delta,\eta}}_{X}
        \n{u_{\delta,\eta}}_{\widetilde{L^{\rho}}(T,\infty;\dB_{r,\sigma}^{n/r-1+2/\rho})}\\
        &\quad 
        \leq
        C
        \Mp{
        \sum_{j \in \mathbb{Z}}
        \sp{
        e^{-c2^{2j}(T-\tilde{T})}
        2^{(n/r-1)j}
        \n{\Delta_j u_{\delta,\eta}(T)}_{L^r}
        }^\sigma
        }^{1/\sigma}
        \\
        &\qquad 
        +
        C
        \n{u_{\delta,\eta}}_{\widetilde{L^{\rho}}(T,\infty;\dB_{r,\sigma}^{n/r-1+2/\rho})}.
    \end{align}
    By the dominated convergence theorem, we have 
    \begin{align}
        \limsup_{\tilde{T} \to \infty}
        \n{u_{\delta,\eta}}_{\widetilde{L^{\infty}}(\tilde{T},\infty;\dB_{r,\sigma}^{n/r-1})}
        \leq 
        C
        \n{u_{\delta,\eta}}_{\widetilde{L^{\rho}}(T,\infty;\dB_{r,\sigma}^{n/r-1+2/\rho})}
        \to 0
    \end{align}
    as $T \to \infty$.
    This completes the proof in the case \eqref{case1}.

    We finally mention the case of \eqref{case2}.
    We have only to change $g_{\delta,\eta}$ as
    \begin{align}
        g_{\delta,\eta}(x)
        :=
        \frac{\eta}{\sqrt{\log N_{\delta}}}
        \Delta
        \sp{\Psi(x)\sum_{k=10}^{N_\delta} \frac{1}{\sqrt{k}}\cos(2^{k^2}x_1)},
    \end{align}
    in the above Step 1, where $N_{\delta}$ is an large integer so that $1/\sqrt{\log N_{\delta}}<\delta$.
    By a similar method of \cite{Tsu-19}, we have $\div g_{\delta,\eta}=0$ and
    \begin{align}
    \|f_{\delta,\eta}\|_{\widetilde{L^\infty}(t_0,\infty;\dB^{-2}_{n,\sigma})}
    \leq 
    \frac{\eta}{\sqrt{\log N_{\delta}}}
    \left(\sum_{k=10}^{N_{\delta}}\frac{1}{k^{\sigma/2}}\right)^{1/\sigma}
    \leq 
    \begin{cases}
    C\eta & (\sigma=2),\\
    C\delta\eta & (2<\sigma<\infty),
    \end{cases}
    \end{align}
    and we can also obtain the desired estimate of $u^{(1)}_{\delta,\eta}$.
    
    For $u^{(2)}_{\delta,\eta}$, we focus on $I_{2,1}$. We see that $u^{(1;4)}_{\delta,\eta}=(-\Delta)^{-1}g_{\delta,\eta}$ and
    \begin{align*}
    \div (u^{(1;4)}_{\delta,\eta}(x)\otimes u^{(1;4)}_{\delta,\eta}(x))
    &= \frac{\eta^2}{\log N_{\delta}} \Phi(x) \sum_{k,\ell=10}^{N_\delta} \frac{1}{\sqrt{k\ell}} \cos(2^{k^2}x_1)\cos(2^{\ell^2}x_1) \\
    &
    =: \frac{\eta^2}{2}\Phi(x)\left(J_1+J_2(x)\right), 
    \end{align*}
    where
    \begin{align*}
    &
    J_1:={}
    \frac{1}{\log N_{\delta}}\sum_{k=10}^{N_{\delta}} \frac{1}{k},\\
    &
    \begin{aligned}
    J_2(x):={}&
    \frac{1}{\log N_{\delta}}
    \sum_{k=10}^{N_{\delta}} \frac{1}{k}\cos(2^{k^2+1}x_1),\\
    &
    +\frac{1}{\log N_{\delta}}
    \sum_{\substack{10\leq k,\ell \leq N_{\delta}\\k\neq \ell}} \frac{1}{\sqrt{k\ell}}  
    \sp{
     \cos((2^{k^2}+2^{\ell^2})x_1)+\cos((2^{k^2}-2^{\ell^2})x_1)
     }.
    \end{aligned}
    \end{align*}
    %%%%%%%%%%%%%%%%%%%%%%%%%%%%%%%%%%%%%%%%%%%%%%
    Since $2^{k^2+1}$, 
    $2^{k^2}+2^{\ell^2}$ 
    and 
    $|2^{k^2}-2^{\ell^2}|$ 
    are large enough, 
    we see for any $j\leq 2$ that
    \begin{align*}
    \Delta_j 
    \left[
    \mathbb{P}\div (u^{(1;4)}_{\delta,\eta}\otimes u^{(1;4)}_{\delta,\eta})\right]
    = 
    \eta^2
    J_1
    \Delta_j
    \mathbb{P}
    \Phi.
    \end{align*}
    %Moreover, there are universal constants $0<c<C<\infty$ such that $c<J_1<C$. 
    By an elementary calculation, we see that $1 \leq J_1 \leq 2$.
    Hence, we obtain the lower bound of $I_{2;1}$ such as
    \begin{align}
        \sup_{j\leq 2}2^{-j}\n{\Delta_j I_{2;1}}_{L^\infty} \geq c\eta^2
    \end{align}
    by using a similar method in Step 3.
    The other calculation is almost same as the case $n<p\leq\infty$; we omit the details and complete the proof.
\end{proof}

\subsection{Proof of Theorem \ref{thm:nD}}
Now, we are in a position to present the proof of Theorem \ref{thm:nD}.
\begin{proof}[Proof of Theorem \ref{thm:nD}]
Let us choose 
\begin{align}
    \eta_*:= \min\Mp{\mu,\frac{\nu}{C_*},\frac{\nu}{K_0},\eta_0}, \quad \delta_k(\varepsilon):=\varepsilon 2^{-k}.
\end{align}
Then, it follows from Proposition \ref{prop:nD} with $t_0=0$ and $a=0$ that there exist an external force $f_{\delta_1(\varepsilon),\eta_*} \in \widetilde{C}((0,\infty);\dB^{-2}_{n,2}(\mathbb{R}^n)) \cap \widetilde{C}([0,\infty);\dB_{r,\sigma}^{n/r-3}(\mathbb{R}^n))$ satisfying $\div f_{\delta_1(\varepsilon),\eta_*}=0$, $f_{\delta_1(\varepsilon),\eta_*}(0)=0$, $f_{\delta_1(\varepsilon),\eta_*}(t)=0$ for all $t \geq T_*$, and 
    \begin{align}
        \n{f_{\delta_1(\varepsilon),\eta_*}}_{\widetilde{L^{\infty}}(0,\infty;\dB^{-2}_{n,2})} 
        \leq \eta_*,
        \qquad 
        \n{f_{\delta_1(\varepsilon),\eta_*}}_{\widetilde{L^{\infty}}(0,\infty;\dot{B}^{n/r-3}_{r,\sigma})}
        \leq \eta_*\delta_1(\varepsilon),
    \end{align}
    such that 
    the forced Navier--Stokes system 
    \eqref{eq:f-NS} with $a=0$
    admits a unique solution  
    \begin{align}
        u_{\delta_1(\varepsilon),\eta_*}\in {}
        &
        C((0,\infty);L^{n,\infty}(\mathbb{R}^n))
        \\
        &
        \cap 
        \widetilde{C}([0,\infty);\dB_{r,\sigma}^{n/r-1}(\mathbb{R}^n)) \cap \widetilde{L^{\rho}}(0,\infty;\dB_{r,\sigma}^{n/r-1+2/\rho}(\mathbb{R}^n))
    \end{align}
    satisfying
    \begin{align}
        &
        \n{u_{\delta_1(\varepsilon),\eta_*}}_{L^{\infty}(0,\infty;L^{n,\infty})}
        \leq C_*\eta_*, \label{wLn:1}
        \\
        &
        \n{u_{\delta_1(\varepsilon),\eta_*}(T_*)}_{\dB_{\infty,\infty}^{-1}}
        \geq \frac{\eta_*^2}{K_0},
        \\
        &
        \lim_{t \to \infty}
        \n{u_{\delta_1(\varepsilon),\eta_*}(t)}_{\dB_{r,\sigma}^{n/r-1}}
        =
        0.\label{lim:1}
    \end{align}
    From \eqref{lim:1}, there exists a time $T_1>T_*$ such that 
    \begin{align}\label{in:T_1-1}
        \n{u_{\delta_1(\varepsilon),\eta_*}(T_1)}_{\dB_{r,\sigma}^{n/r-1}}
        \leq 
        2^{-1}\eta_*^3.
    \end{align}
    Here, we emphasis that the constant in \eqref{wLn:1} is not $K_0$ but $C_*$ even though $K_0$ may be larger than $C_*$, since $u_{\delta_1(\varepsilon),\eta_*}$ coincides the global solution constructed in Proposition \ref{prop:Yamazaki} by the uniqueness. 
    This issue is crucial in our argument as \eqref{wLn:1} implies 
    \begin{align}\label{in:T_1-2}
        \n{u_{\delta_1(\varepsilon),\eta_*}(T_1)}_{L^{n,\infty}} \leq C_*\eta_*
    \end{align}
    which is necessary to apply Proposition \ref{prop:nD} on $[T_1,\infty)$. in the next paragraph.

    Next, while paying attention to \eqref{in:T_1-1} and \eqref{in:T_1-2}, we may apply Proposition \ref{prop:nD} again for \eqref{eq:f-NS-t0} with $\eta=\eta_*$, $\delta=\delta_2$, $t_0=T_1$ and $a=u_{\delta_1(\varepsilon),\eta_*}(T_1)$.
    Then, we may construct a
    an external force $f_{\delta_2(\varepsilon),\eta_*} \in \widetilde{C}([T_1,\infty);\dB^{-2}_{n,2}(\mathbb{R}^n)) \cap \widetilde{C}([T_1,\infty);\dB_{r,\sigma}^{n/r-3}(\mathbb{R}^n))$ satisfying $\div f_{\delta_2(\varepsilon),\eta_*}=0$, $f_{\delta_2(\varepsilon),\eta_*}(T_1)=0$, $f_{\delta_2(\varepsilon),\eta_*}(t)=0$ for all $t \geq T_1+T_*$, and 
    \begin{align}
        \n{f_{\delta_2(\varepsilon),\eta_*}}_{\widetilde{{L^{\infty}}}(T_1,\infty;\dB^{-2}_{n,2})} 
        \leq \eta_*,
        \qquad 
        \n{f_{\delta_2(\varepsilon),\eta_*}}_{\widetilde{L^{\infty}}(T_1,\infty;\dot{B}^{n/r-3}_{r,\sigma})}
        \leq \eta_*\delta_2(\varepsilon),
    \end{align}
    such that
    the forced Navier--Stokes system 
    \eqref{eq:f-NS-t0} 
    admits a unique solution  
    \begin{align}
        u_{\delta_1(\varepsilon),\eta_*}\in {}
        &
        C((T_1,\infty);L^{n,\infty}(\mathbb{R}^n))
        \\
        &
        \cap 
        \widetilde{C}([T_1,\infty);\dB_{r,\sigma}^{n/r-1}(\mathbb{R}^n)) \cap \widetilde{L^{\rho}}(T_1,\infty;\dB_{r,\sigma}^{n/r-1+2/\rho}(\mathbb{R}^n))
    \end{align}
    satisfying
    \begin{align}
        &
        \n{u_{\delta_2(\varepsilon),\eta_*}}_{L^{\infty}(T_1,\infty;L^{n,\infty})}
        \leq K_0\eta_*, \label{wLn:2}
        \\
        &
        \n{u_{\delta_2(\varepsilon),\eta_*}(T_1+T_*)}_{\dB_{\infty,\infty}^{-1}}
        \geq \frac{\eta_*^2}{K_0},
        \\
        &
        \lim_{t \to \infty}
        \n{u_{\delta_2(\varepsilon),\eta_*}(t)}_{\dB_{r,\sigma}^{n/r-1}}
        =
        0.\label{lim:2}
    \end{align}
    From \eqref{lim:2}, there exists a time $T_2>T_1+T_*$ such that 
    \begin{align}\label{in:T_2-1}
        \n{u_{\delta_1(\varepsilon),\eta_*}(T_2)}_{\dB_{r,\sigma}^{n/r-1}}
        \leq 
        2^{-2}
        \eta_*^3.
    \end{align}
    Next, we show that the constant $K_0$ in \eqref{wLn:2} may be replaced by $C_*$.
    Since $u_{\delta_1(\varepsilon),\eta_*}{\bf 1}_{[0,T_1]} + u_{\delta_2(\varepsilon),\eta_*}{\bf 1}_{[T_1,\infty)}$ solves \eqref{eq:f-NS} with $a=0$ and $f=f_{\delta_1(\varepsilon),\eta_*}+ f_{\delta_2(\varepsilon),\eta_*}$ globally in time and belongs to the uniqueness class \eqref{unique}, 
    $u_{\delta_1(\varepsilon),\eta_*}{\bf 1}_{[0,T_1]} + u_{\delta_2(\varepsilon),\eta_*}{\bf 1}_{[T_1,\infty)}$ must coincide the global solution constructed in
    Proposition \ref{prop:Yamazaki}, which implies 
    \begin{align}
        \n{u_{\delta_2(\varepsilon),\eta_*}}_{L^{\infty}(T_1,\infty;L^{n,\infty})}
        &\leq
        \n{u_{\delta_1(\varepsilon),\eta_*}{\bf 1}_{[0,T_1]} + u_{\delta_2(\varepsilon),\eta_*}{\bf 1}_{[T_1,\infty)}}_{L^{\infty}(0,\infty;L^{n,\infty})}
        \\
        &\leq 
        C_*\eta_*. \label{in:T_2-2}
    \end{align}
    From \eqref{in:T_2-1} and \eqref{in:T_2-2}, we see that $a=u_{\delta_2(\varepsilon),\eta_*}(T_2)$ satisfies the assumption of Proposition \ref{prop:nD} on $[T_2,\infty)$.
    Hence, repeating the same procedure inductively, we may construct a triplet of the time sequence, external force, and corresponding solution to \eqref{eq:f-NS}:
    \begin{align}\label{f-u}
        &
        \{T_k\}_{k=0}^{\infty};\ 
        0=T_0<T_*<T_1<\cdots<T_k<T_k+T_*<T_{k+1}<\cdots,
        \\
        &
        f_{\varepsilon}
        :=
        \sum_{k=1}^{\infty}
        f_{\delta_k(\varepsilon),\eta_*}
        \in \widetilde{C}([0,\infty);\dB^{-2}_{n,2}(\mathbb{R}^n)) \cap \widetilde{C}([0,\infty);\dB_{r,\sigma}^{n/r-3}(\mathbb{R}^n)),
        \\
        &
        u_{\varepsilon}
        :=
        \sum_{k=1}^{\infty}
        u_{\delta_k(\varepsilon),\eta_*}
        {\bf 1}_{[T_{k-1},T_k]}
        \in 
        C((0,\infty);L^{n,\infty}(\mathbb{R}^n))
        \cap 
        \widetilde{C}([0,\infty);\dB_{r,\sigma}^{n/r-1}(\mathbb{R}^n)),
    \end{align}
    which satisfy 
    \begin{align}
        \n{f_{\varepsilon}}_{\widetilde{L^{\infty}}(0,\infty;\dot{B}^{n/r-3}_{r,\sigma})}
        \leq \eta_*\varepsilon,
        \qquad
        \n{f_{\varepsilon}}_{\widetilde{L^{\infty}}(T_{k-1},T_{k};\dot{B}^{n/r-3}_{r,\sigma})}
        \leq \eta_*\varepsilon2^{-k},
    \end{align}
    and 
    \begin{align}
        \n{u_{\varepsilon}}_{L^{\infty}(0,\infty;L^{n,\infty})}
        \leq C_*\eta_*,
        \quad
        \n{u_{\varepsilon}(T_k)}_{\dB_{r,\sigma}^{n/r-1}}
        \leq 
        \eta_*^32^{-k},
        \quad
        \n{u_{\varepsilon}(T_k+T_*)}_{\dB_{\infty,\infty}^{-1}}
        \geq
        \frac{\eta_*^2}{K_0}
    \end{align}
    for all $k=1,2,3,\cdots$.
    Hence, we complete the proof.
\end{proof}

\section{Two-dimensional analysis}\label{sec:2D}
In this section, we focus on Theorem \ref{thm:rough} with the case of $n=2$.
To prove this, we show the following improved result. 
\begin{thm}\label{thm:2D}
    Let $n=2$.
    Then, there exist positive constants $\varepsilon_0$ and $c_0$ such that for any $0<\varepsilon\leq \varepsilon_0$, there exists an external force $f_{\varepsilon} \in \widetilde{C}((0,\infty);\dB_{1,1}^{-1}(\mathbb{R}^2))$ such that 
    \begin{align}\label{f-2}
        \n{f_{\varepsilon}}_{\widetilde{L^{\infty}}(0,\infty;\dB_{1,1}^{-2})}\leq \varepsilon,
        \quad
        \lim_{t \to \infty}
        \n{f_{\varepsilon}(t)}_{\dB_{1,1}^{-1}}
        =
        0,
    \end{align}
    and
    \eqref{eq:f-NS} with $a=0$ and $f=f_{\varepsilon}$ admits a global solution $u_{\varepsilon} \in C([0,\infty);\dB_{1,1}^{1}(\mathbb{R}^2))$ satisfying 
    \begin{align}\label{osc-2}
        \limsup_{t \to \infty}\n{u_{\varepsilon}(t)}_{\dB_{\infty,1}^{-1}}
        \geq c_0,
        \quad
        \liminf_{t \to \infty}
        \n{u_{\varepsilon}(t)}_{\dB_{1,1}^{1}}
        =0.
    \end{align}
\end{thm}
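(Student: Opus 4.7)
The plan is to mirror the inductive construction of Theorem \ref{thm:nD}: I will build $f_\varepsilon = \sum_{k \geq 1} f_{\delta_k(\varepsilon),\eta_*}$ as a sum of temporally intermittent blocks supported on $[T_k, T_k+T_*]$, and glue corresponding block solutions $u_{\delta_k(\varepsilon),\eta_*}$ produced by a two-dimensional analog of Proposition \ref{prop:nD}. Concretely, the local block proposition will state: starting from initial data $a$ at time $t_0$ with $\|a\|_{\dB_{1,1}^1} \leq \eta^3$, I construct $f_{\delta,\eta}$ and a solution $u_{\delta,\eta}$ to \eqref{eq:f-NS-t0} satisfying $\|u_{\delta,\eta}(t_0+T_*)\|_{\dB_{\infty,1}^{-1}} \geq c \eta^2$ and $\|u_{\delta,\eta}(t)\|_{\dB_{1,1}^1}\to 0$ as $t\to\infty$. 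Once this is available, the assembly of $f_\varepsilon$ and $u_\varepsilon$ and the verification of \eqref{f-2}--\eqref{osc-2} proceed exactly as in the proof of Theorem \ref{thm:nD}, so the core of the argument reduces to this local block proposition.

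For the spatial profile I would take $g_{\delta,\eta}(x) = \eta\,\Delta\!\left[\Psi(x)\cos(N_\delta x_1)\right]$ where $\Psi := \pnabla \psi$ for a Schwartz scalar $\psi$; in $\mathbb{R}^2$ this $\Psi$ is automatically solenoidal, so $\div g_{\delta,\eta}=0$. Bounds in $\dB_{1,1}^{-2}$ and $\dB_{1,1}^{-1}$ follow from Lemma \ref{lemm:cos}, but the homogeneity in $\dB_{1,1}^s$ is opposite to that of the high-dimensional case $p>n$, so the relationship between $N_\delta$ and $\delta$ is different: in particular, achieving $\|g_{\delta,\eta}\|_{\dB_{1,1}^{-1}} \leq \eta\delta$ together with an $O(\eta^2)$ low-frequency residual may require, as in case \eqref{case2} of Proposition \ref{prop:nD}, replacing the single cosine by a logarithmic-scale superposition $\sum_{k=10}^{N_\delta} k^{-1/2}\cos(2^{k^2}x_1)$. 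With such a profile, Steps 1--3 of Proposition \ref{prop:nD} carry over verbatim: the quadratic interaction $\div(u^{(1;4)}\otimes u^{(1;4)})$ produces a nontrivial zero-mode of the form $\eta^2 \Phi$ with $\Phi$ the two-dimensional analog of \eqref{eq:Phi}, and a frequency-localized analysis of $(-\Delta)^{-1}\mathbb{P}\Phi$ yields the key lower bound $\sup_{j\leq 2} 2^{-j}\|\Delta_j I_{2;1}\|_{L^\infty} \geq c_* \eta^2$ analogous to \eqref{eq:I_21}.

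The hardest part is substituting for Proposition \ref{prop:Yamazaki}, since the $L^{n,\infty}$ well-posedness theory is genuinely unavailable at $n=2$. The most natural replacement is the classical two-dimensional Leray--Hopf theory: since the blocks $f_{\delta_k,\eta_*}$ have Schwartz spatial profile, $f_\varepsilon$ lies in $L^\infty_t L^2_x$ and generates a unique global smooth solution in $L^\infty(0,\infty; L^2(\mathbb{R}^2))\cap L^2(0,\infty;\dH^1(\mathbb{R}^2))$, which serves as the ambient global object to be glued. The remainder $\widetilde{u}_{\delta,\eta}$ of Step 4 is then constructed by contraction in a hybrid Chemin--Lerner space
\begin{align}
Z_\rho := \widetilde{C}([t_0,\infty);\dB_{1,1}^{1}) \cap \widetilde{L^\rho}(t_0,\infty;\dB_{1,1}^{1+2/\rho})
\end{align}
with $\rho>2$ chosen so that Lemma \ref{lemm:nonlin-ndim} applies with $n=2$ and $p=p_1=1$; the $L^2$-energy bound from the Leray--Hopf solution provides the low-regularity control playing the role of $\|u\|_{L^\infty(L^{n,\infty})}$ in the high-dimensional estimates, and weak--strong uniqueness ensures that the patched critical solution agrees with the energy solution across blocks.

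Finally, Step 5 transfers directly: for $t > T_k+T_*$ the Duhamel representation together with the heat-kernel decay and dominated convergence give $\|u_\varepsilon(t)\|_{\dB_{1,1}^1}\to 0$, so $T_{k+1}$ can be chosen with $\|u_\varepsilon(T_{k+1})\|_{\dB_{1,1}^1} \leq 2^{-(k+1)}\eta_*^3$, allowing the induction to continue. I expect the two genuine difficulties to be (i) designing the correct oscillating profile so that the $\dB_{1,1}^{-1}$ norm of $g_{\delta,\eta}$ can be made $\leq \eta\delta$ while still isolating an $O(\eta^2)$ low-frequency contribution in the second iteration---this is where the superposition trick from case \eqref{case2} will be essential---and (ii) verifying that Lemma \ref{lemm:nonlin-ndim} admits a legitimate choice of exponents at the endpoint $p=p_1=1$, $n=2$, which requires a careful check of \eqref{assump:pqr}. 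Once these two points are settled, the global pair $(f_\varepsilon,u_\varepsilon)$ satisfies \eqref{f-2}--\eqref{osc-2} with $c_0 := c_*\eta_*^2$.
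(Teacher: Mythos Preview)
Your plan has a genuine gap at exactly the two points you flag as ``difficulties,'' and neither can be resolved along the lines you suggest.  For (i), the scaling goes the wrong way: with $p=1$ and $n=2$ the force space is $\dB_{1,1}^{-1}$ and the velocity space is $\dB_{1,1}^{1}$, so by Lemma~\ref{lemm:cos} one has $\|\eta\,\Delta[\Psi\cos(N_\delta x_1)]\|_{\dB_{1,1}^{-1}}\sim \eta N_\delta$, which is \emph{large}, not small.  The superposition $\sum_{k=10}^{N_\delta}k^{-1/2}\cos(2^{k^2}x_1)$ does not help either, since each summand contributes $\sim k^{-1/2}2^{k^2}$ to the $\dB_{1,1}^{-1}$ norm and the sum diverges; the mechanism of case~\eqref{case2} relied on the fact that $\|\Psi\cos(Rx_1)\|_{\dB_{n,\sigma}^{0}}$ is \emph{independent} of $R$, which is precisely what fails at $p=1$.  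For (ii), Lemma~\ref{lemm:nonlin-ndim} with $n=2$, $p=1$, $p_1=2$ requires $n(\tfrac1p-\tfrac{1}{p_1})=1<1-\tfrac{2}{r_3}$, which is impossible, so the $L^2$ Leray--Hopf bound cannot play the role that $L^{n,\infty}$ plays in Lemma~\ref{cor:nonlin-ndim}.  With $p=p_1=1$ the lemma does apply for fixed $\rho>2$, but then both factors must sit in $\widetilde{L^\rho}(\dB_{1,\cdot}^{1+2/\rho})$, and there is no way to make $\|u^{(1)}\|$ in this norm $O(\eta)$ uniformly while still producing an $O(\eta^2)$ low-frequency residual in $u^{(2)}$.

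The paper's actual construction is structurally different from Proposition~\ref{prop:nD} and does not try to mimic it.  The spatial profile $g=\Delta\nabla^\perp(\psi\cos(Mx_1))$ is kept at a \emph{fixed} frequency $M$, the smallness of the force comes from a prefactor $\eta/\sqrt{N}$, and the temporal support has length $2^{2N}$ (so $T_*$ is \emph{not} a universal constant but grows with the block index).  The analysis is closed in an $N$-dependent norm $\|v\|_{X^N}:=\|v\|_{\widetilde{L^\infty}(\dB_{1,1}^1)}+\sqrt{N}\,\|v\|_{\widetilde{L^N}(\dB_{1,2}^{1+2/N})}$ using a bilinear estimate (Lemma~\ref{lemm:nonlin-2dim}, taken from \cite{Fuj-24}) whose constant grows like $N$; this is exactly balanced by $\|u_N^{(1)}\|_{X^N}\lesssim \eta$, so that $\|u_N^{(2)}\|_{X^N}\lesssim \eta^2$.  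The lower bound $\|u_N^{(2)}(t_0+2^{2N})\|_{\dB_{\infty,1}^{-1}}\gtrsim \eta^2$ comes from the low-frequency mass accumulated by integrating the $x$-independent part of $\div(\nabla^\perp\Theta\otimes\nabla^\perp\Theta)$ over the long interval $[t_0,t_0+2^{2N}]$, with the factor $1/N$ cancelling $(\eta/\sqrt{N})^{-2}\cdot\eta^2$.  No ambient $L^{n,\infty}$ or Leray--Hopf well-posedness is invoked; the gluing across blocks is done purely by matching initial data and choosing $\widetilde T_{k,\varepsilon}$ so that $\|u_{N_{k,\varepsilon}}(\widetilde T_{k,\varepsilon})\|_{\dB_{1,1}^1}\leq \delta_*$, which is all Proposition~\ref{prop:2D} needs as input.
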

\begin{rem}\label{rem:2D}
    Theorem \ref{thm:2D} claims a more improved result than Theorem \ref{thm:rough} with $n=2$ due to the continuous embedding
    $\dB_{1,1}^{2+s}(\mathbb{R}^2) \hookrightarrow \dB_{p,1}^{2/p+s}(\mathbb{R}^2) \hookrightarrow \dB_{\infty,1}^{s}(\mathbb{R}^2)$ with $1 \leq p \leq \infty$ and $s=-1,-3$.
\end{rem}
Before starting the rigorous proof of Theorem \ref{thm:2D},
let us mention the difference between two-dimensional and high-dimensional cases.
In the case of $n=2$, nonlinear estimate in $L^{n,\infty}(I;L^{n,\infty}(\mathbb{R}^n))$ stated in Lemma \ref{lemm:Yamazaki} fails.
Not only does the weak Lebesgue space framework break down, but it is also difficult to find a Banach space $X \subset \mathscr{S}'(\mathbb{R}^2)$ satisfying the nonlinear estimate
\begin{align}
    \n{\int_0^t e^{(t-\tau)\Delta}\mathbb{P}\div (u \otimes v) (\tau)d \tau }_{L^{\infty}(0,\infty;X)}
    \leq
    C
    \n{u}_{L^{\infty}(0,\infty;X)}
    \n{v}_{L^{\infty}(0,\infty;X)}.
\end{align}
Indeed, it seems hopeless to hold the above estimate in the critical Chemin--Lerner spaces $\widetilde{L^{\infty}}(0,\infty;\dB_{p,q}^{2/p-1}(\mathbb{R}^2))$ for all $1 \leq p,q \leq \infty$, whereas it does hold for $1 \leq p < n$ and $1 \leq q \leq \infty$ in the $n$-dimensional case with $n \geq 3$.
In Proposition \ref{prop:2D}, which corresponds to Proposition \ref{prop:nD}, instead of employing weak Lebesgue spaces for the construction of solutions, we use the nonlinear estimate given in the Lemma \ref{lemm:nonlin-2dim} stated below.\footnote{When applying Lemma \ref{lemm:nonlin-2dim} to show Proposition \ref{prop:2D} below, it is crucial that the external force has bounded support with respect to the time variable, with size at most $2^{2N}$.}
\subsection{Key proposition}
Following the same split in the above section, we establish the inductive proposition for the two-dimensional case. 
\begin{prop}\label{prop:2D}
    There exists positive constants $C_*$, $\delta_*$, and $c_*$ such that for any $N \geq 3$ and $t_0 \in \mathbb{R}$,
    there exists $f_N \in \widetilde{C}([t_0,\infty);\dB_{1,1}^{-1}(\mathbb{R}^2))$ satisfying $\div f_N = 0$, $f_N(t_0)=0$, $f_N(t)=0$ for all $t \geq t_0+2^{2N}$, and 
    \begin{align}
        \n{f_N}_{\widetilde{L^{\infty}}(t_0,\infty;\dB_{1,1}^{-1})}
        \leq
        \frac{C}{\sqrt{N}},
    \end{align}
    such that 
    for any $a \in \dB_{1,1}^1(\mathbb{R}^2)$ with $\div a= 0$ and $\n{a}_{\dB_{1,1}^1} \leq \delta_*$,
    the forced Navier--Stokes equations 
    \begin{align}\label{eq:fN-Ns}
        \begin{cases}
            \partial_t u - \Delta u + \mathbb{P} \div (u \otimes u) = f_N, & t>t_0, x \in \mathbb{R}^2,
            \\
            \div u = 0, & t \geq t_0, x \in \mathbb{R}^2, 
            \\
            u(t_0,x)=a(x), & t=t_0, x \in \mathbb{R}^2,
        \end{cases}
    \end{align}
    possesses a global solution 
    $u_N \in \widetilde{C}([t_0,\infty);\dB_{1,1}^1(\mathbb{R}^2))$ satisfying
    \begin{align} \label{eq:uNbound} 
        \lim_{t \to \infty}
        \n{u_N(t)}_{\dB_{1,1}^1}
        =0,
        \qquad
        \n{u_N(t_0+2^{2N})}_{\dB_{\infty,1}^{-1}}
        \geq 
        c_*.
    \end{align}
\end{prop}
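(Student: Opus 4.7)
I would follow the five-step architecture used to establish Proposition \ref{prop:nD} above, with two structural modifications dictated by $n=2$. First, the weak-$L^n$ reference space of Lemma \ref{lemm:Yamazaki} is not available, so the entire argument is carried out in the Chemin--Lerner scale $\widetilde L^\infty(\dB_{1,1}^1)\cap\widetilde L^\rho(\dB_{1,1}^{1+2/\rho})$ for a suitable $\rho\in(2,\infty)$, with the critical bilinear estimate Lemma \ref{lemm:nonlin-2dim} (which pays a factor depending on the length of the time interval) replacing Lemma \ref{lemm:Yamazaki}. Second, the time support of $f_N$ must be bounded in order to absorb this length-dependent factor, which is the origin of the cutoff $[t_0,t_0+2^{2N}]$ in the statement.

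The force is taken in the form $f_N(t,x)=\chi_N(t)\,g_N(x)$, where $\chi_N$ is a smooth cutoff supported in $[t_0,t_0+2^{2N}]$ and equal to $1$ on $[t_0+h,t_0+2^{2N}-h]$ for a fixed small $h>0$, and $g_N$ is a divergence-free lacunary superposition of $N$ cosine modes modeled on the spatial profile used in the $r=n$ subcase of Proposition \ref{prop:nD} (via the stream-function structure \eqref{eq:Psi}), but rescaled so that $\|g_N\|_{\dB_{1,1}^{-1}}\lesssim 1/\sqrt{N}$ while the diagonal self-interaction of $g_N$ produces, after applying $(-\Delta)^{-1}\mathbb{P}$, a zero-frequency profile proportional to the fixed $\Phi$ of \eqref{eq:Phi} with coefficient of order one, uniformly in $N$. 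With this force fixed, Lemma \ref{lemm:linear} gives the a priori control of the first iterate $u_N^{(1)}(t)=e^{(t-t_0)\Delta}a+\int_{t_0}^t e^{(t-\tau)\Delta}f_N(\tau)\,d\tau$ in $\widetilde L^\infty(\dB_{1,1}^1)\cap\widetilde L^\rho(\dB_{1,1}^{1+2/\rho})$, with size of order $\delta_*$ coming from $a$ and of order $2^{2N/\rho}/\sqrt N$ coming from $f_N$.

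The heart of the proof is the lower bound on the second iterate
\[
u_N^{(2)}(t_0+2^{2N})=-\int_{t_0}^{t_0+2^{2N}} e^{(t_0+2^{2N}-\tau)\Delta}\mathbb{P}\,\div\!\bigl(u_N^{(1)}\otimes u_N^{(1)}\bigr)(\tau)\,d\tau.
\]
I would split the time integral into the middle window $[t_0+h,t_0+2^{2N}-h]$, where $u_N^{(1)}$ stabilizes to the profile $(-\Delta)^{-1}g_N$ up to transient heat terms, plus the two boundary pieces of length $h$, exactly as in Step 3 of the proof of Proposition \ref{prop:nD}. On the middle window the diagonal self-interaction survives the heat damping (because the coefficient in front of $\Phi$ is uniform in $N$) and yields $\|u_N^{(2)}(t_0+2^{2N})\|_{\dB_{\infty,1}^{-1}}\geq 3c_*$ via the standard Littlewood--Paley test $\|\,\cdot\,\|_{\dB_{\infty,1}^{-1}}\geq\sup_{j\leq 2}2^{-j}\|\Delta_j\,\cdot\,\|_{L^\infty}$, by choosing $h$ and $T_*^{-1}=2^{-2N}$ small enough so that the two exponentials $1-e^{h\Delta}$ and $e^{(T_*-h)\Delta}$ contribute only a fraction; the two boundary pieces and the cross-and-transient terms are absorbed using Lemma \ref{lemm:nonlin-2dim} and $\delta_*$-smallness of the data.

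Finally, I would produce the full solution as $u_N=u_N^{(1)}+u_N^{(2)}+\tilde u_N$ by a contraction argument for $\tilde u_N$ on the bounded interval $[t_0,t_0+2^{2N}]$ in $\widetilde L^\infty(\dB_{1,1}^1)\cap\widetilde L^\rho(\dB_{1,1}^{1+2/\rho})$ using Lemma \ref{lemm:nonlin-2dim}; the remainder is of strictly higher order in $\delta_*+N^{-1/2}$ and does not disturb the lower bound. For $t>t_0+2^{2N}$ the force vanishes, so $u_N$ satisfies the unforced two-dimensional Navier--Stokes system with data $u_N(t_0+2^{2N})\in\dB_{1,1}^1$, and the decay $\|u_N(t)\|_{\dB_{1,1}^1}\to 0$ as $t\to\infty$ follows from the same dominated-convergence argument on Littlewood--Paley blocks used in Step 5 of the proof of Proposition \ref{prop:nD}, together with Lemma \ref{lemm:nonlin-2dim} applied on a shrinking tail. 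The main obstacle is quantitative: simultaneously ensuring that Lemma \ref{lemm:nonlin-2dim} on an interval of length $2^{2N}$ produces an error strictly smaller than the constant-sized resonant part of $u_N^{(2)}$, that the forcing $\dB_{1,1}^{-1}$ norm remains of order $1/\sqrt N$, and that the resonance coefficient is $\Theta(1)$ in $N$; the parameters in the statement are precisely tuned to reconcile these three constraints, which is what forces the time scale $T_*=2^{2N}$ and the size $1/\sqrt N$ in the statement.
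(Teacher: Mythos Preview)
Your outline has the right overall shape, but it misses the two mechanisms that actually make the 2D argument close, and as written the contraction step would not go through.

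First, the functional framework. You fix $\rho\in(2,\infty)$ and then acknowledge that the first iterate has $\widetilde L^\rho(\dB_{1,1}^{1+2/\rho})$ size of order $2^{2N/\rho}/\sqrt N$, which diverges with $N$; no contraction can close from that input. Lemma~\ref{lemm:nonlin-2dim} does not ``pay a factor depending on the length of the time interval'': its constants $CN$ and $C\sqrt N$ depend on the time-integrability exponent. The paper's resolution is to take that exponent equal to $N$. Then $\|\chi\|_{L^N(t_0,t_0+2^{2N})}\le (2^{2N})^{1/N}=4$ is bounded uniformly, and with the weighted norm
\[
\|u\|_{X^N}:=\|u\|_{\widetilde L^\infty(\dB_{1,1}^1)}+\sqrt{N}\,\|u\|_{\widetilde L^N(\dB_{1,2}^{1+2/N})}
\]
Lemma~\ref{lemm:nonlin-2dim} becomes $\|B(u,v)\|_{X^N}\le C\|u\|_{X^N}\|v\|_{X^N}$ with $C$ independent of $N$. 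This yields $\|u_N^{(1)}\|_{X^N}\le C\eta$ uniformly and the fixed point runs on all of $(t_0,\infty)$, not just on $[t_0,t_0+2^{2N}]$.

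Second, the force and the lower bound. The paper does \emph{not} use a lacunary superposition; the $r=n$ profile you invoke was designed for $q>2$ and is incompatible with the $\ell^1$ summability in $\dB_{1,1}^{-1}$. Concretely, if the modes are frequency-separated then $\|\sum_k c_k\Phi_k\|_{\dB_{1,1}^{-1}}\sim\sum_k|c_k|$, while the diagonal resonance scales like $\sum_k c_k^2\le(\sup_k|c_k|)\sum_k|c_k|$; you cannot have the first $\lesssim 1/\sqrt N$ and the second $\gtrsim 1$. The paper instead takes a \emph{single} mode $f_N=\frac{\eta}{\sqrt N}\chi(t)\,\Delta\nabla^\perp(\psi\cos(Mx_1))$ with $M$ a fixed large constant, so the self-interaction carries a prefactor $\eta^2/N$. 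The missing factor $N$ is recovered from the $\ell^1$ sum defining $\dB_{\infty,1}^{-1}$: the time integral over $[t_0,t_0+2^{2N}]$ produces $(-\Delta)^{-1}(e^\Delta-e^{(2^{2N}-1)\Delta})$ acting on a fixed low-frequency profile, which is undamped on every block $-N\le j\le 0$, and a Fourier-side computation shows each term $2^{-j}\|\Delta_j(\cdot)\|_{L^\infty}$ is bounded below by a constant, so the sum over these $N+1$ blocks gives $\gtrsim N$. Your test $\sup_{j\le 2}2^{-j}\|\Delta_j\cdot\|_{L^\infty}$ extracts only one block and would yield a lower bound of order $\eta^2/N$, not $\eta^2$. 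The interplay you describe as ``tuned'' --- exponent $N$, interval length $2^{2N}$, and $q=1$ in the target norm --- is precisely this, and it is not achievable with a fixed $\rho$ and a lacunary force.
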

To prove the above proposition,
we introduce a new norm
\begin{align}
    \n{f}_{X^N(I)}
    :=
    \n{f}_{\widetilde{L^{\infty}}(t_0,\infty;\dB_{1,1}^1)}
    +
    \sqrt{N}
    \n{f}_{\widetilde{L^N}(I;\dB_{1,2}^{1+2/N})}
\end{align}
for interval $I \subset \mathbb{R}$, $N \geq 1$, $f \in X^N(I):=\widetilde{L^{\infty}}(I;\dB_{1,1}^1(\mathbb{R}^2)) \cap \widetilde{L^N}(I;\dB_{1,2}^{1+2/N}(\mathbb{R}^2))$
and recall the following lemma:
\begin{lemm}[\cite{Fuj-24}*{Lemma 2.5}]\label{lemm:nonlin-2dim}
    There exists a positive constant $C$ such that 
    \begin{align}
        &
        \begin{aligned}
        &
        \n{\int_{t_0}^te^{(t-\tau)\Delta}\mathbb{P}\div (u \otimes v)(\tau)d\tau}_{\widetilde{L^{\infty}}(t_0,\infty;\dB_{1,1}^{1})}
        \\
        &
        \quad
        \leq
        CN
        \n{u}_{\widetilde{L^N}(t_0,\infty;\dB_{1,2}^{1+2/N})}
        \n{v}_{\widetilde{L^N}(t_0,\infty;\dB_{1,2}^{1+2/N})},
        \end{aligned}
        \\
        &
        \begin{aligned}
        &
        \n{\int_{t_0}^te^{(t-\tau)\Delta}\mathbb{P}\div (u \otimes v)(\tau)d\tau}_{\widetilde{L^N}(t_0,\infty;\dB_{1,2}^{1+2/N})}
        \\
        &\quad
        \leq
        C\sqrt{N}
        \n{u}_{\widetilde{L^N}(t_0,\infty;\dB_{1,2}^{1+2/N})}
        \n{v}_{\widetilde{L^N}(t_0,\infty;\dB_{1,2}^{1+2/N})},
        \end{aligned}
    \end{align}
    and in particular
    \begin{align}
        \n{\int_{t_0}^te^{(t-\tau)\Delta}\mathbb{P}\div (u \otimes v)(\tau)d\tau}_{X^N(t_0,\infty)}
        \leq
        C\n{u}_{X^N(t_0,\infty)}\n{v}_{X^N(t_0,\infty)}
    \end{align}
    for all $N \geq 3$, $t_0 \in \mathbb{R}$, and  $u,v \in X^N(t_0,\infty)$.
\end{lemm}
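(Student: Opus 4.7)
My plan is to combine the maximal regularity of Lemma \ref{lemm:linear} with Bony's paraproduct decomposition via Lemma \ref{lemm:nonlin}, carefully tracking how the third Besov index governs the $N$-dependence of the constants. Starting from Lemma \ref{lemm:linear}, I would choose the time exponent $r=N/2$ on the input (dictated by the H\"older relation $L^N\cdot L^N \to L^{N/2}$ in time), which reduces both displayed estimates to bilinear bounds of the form
\[
\n{u\otimes v}_{\widetilde{L^{N/2}}(t_0,\infty;\dB^{4/N}_{1,q})} \leq \kappa_q\,\n{u}_{\widetilde{L^N}(\dB^{1+2/N}_{1,2})}\n{v}_{\widetilde{L^N}(\dB^{1+2/N}_{1,2})},
\]
with $(q,\kappa_q)=(1,CN)$ for the first estimate and $(q,\kappa_q)=(2,C\sqrt N)$ for the second. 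The ``in particular'' bound on $X^N(I)$ then follows automatically because the normalization $\sqrt N\,\n{\cdot}_{\widetilde{L^N}(\dB^{1+2/N}_{1,2})}$ built into $X^N$ absorbs one $\sqrt N$ from each of the two bilinear factors, converting both $CN$ and $C\sqrt N$ into $N$-independent constants on $X^N$.

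The core step is Bony's decomposition $u\otimes v = T_uv + T_vu + R(u,v)$. For the paraproducts $T_uv$ and $T_vu$, I would use the Besov embedding $\dB^{1+2/N}_{1,2}(\mathbb{R}^2)\hookrightarrow \dB^{-1+2/N}_{\infty,2}(\mathbb{R}^2)$ to place the low-frequency factor at negative regularity $s_1=2/N-1$; since $s_1$ stays bounded away from $0$ for $N\geq 3$, the constant in the $T$-estimate of Lemma \ref{lemm:nonlin}(1) is uniformly bounded in $N$. For the remainder $R(u,v)$, I would use $\dB^{1+2/N}_{1,2}\hookrightarrow \dB^{2/N}_{2,2}$ and apply Lemma \ref{lemm:nonlin}(2) with target regularity $s=4/N\to 0$. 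Because $s$ is small, the resulting constant is essentially the $\ell^\alpha$-norm of the geometric kernel $h_m = 2^{ms}\mathbf{1}_{m\leq N_0}$ arising in Young's inequality $\ell^\alpha * \ell^\beta \to \ell^q$ under $1/\alpha+1/\beta=1+1/q$. For target $q=1$ only $\alpha=\beta=1$ is admissible, giving $\n{h}_{\ell^1}\sim 1/s\sim N$; for target $q=2$ one may choose $(\alpha,\beta)=(2,1)$, giving $\n{h}_{\ell^2}\sim 1/\sqrt s\sim\sqrt N$. This is precisely the origin of the $CN$ versus $C\sqrt N$ dichotomy.

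The main technical obstacle will arise in the first (large-$N$) estimate: the $T$-paraproducts inherit the third Besov index of the high-frequency factor $v\in\dB^{1+2/N}_{1,2}$, namely $\ell^2$, whereas the target $\dB^{4/N}_{1,1}$ demands $\ell^1$-summability in dyadic frequencies. I plan to bridge this gap by exploiting the tight low-high frequency localization of $\Delta_j T_uv$ together with the parabolic Duhamel smoothing, so that the $\ell^2\to\ell^1$ conversion on the frequency side costs no more than an additional factor $N$, which is then absorbed into the overall $CN$ bound. Verifying this conversion, and checking that each constant appearing in the paraproduct and embedding steps remains uniform in $N\geq 3$, is the delicate part of the argument.
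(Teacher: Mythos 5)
The paper does not prove this lemma itself; it cites \cite{Fuj-24}*{Lemma~2.5}, so there is no in-paper proof to compare against line by line. That said, your overall strategy — maximal regularity (Lemma~\ref{lemm:linear}) with the time pair $(r,\rho)=(N/2,\infty)$ respectively $(N/2,N)$, then Bony's decomposition with the embeddings $\dB^{1+2/N}_{1,2}\hookrightarrow \dB^{-1+2/N}_{\infty,2}$ for the paraproducts and $\dB^{1+2/N}_{1,2}\hookrightarrow \dB^{2/N}_{2,2}$ for the remainder, and the observation that the remainder constant scales like $1/s\sim N$ in the $\ell^1$-target and $1/\sqrt s\sim\sqrt N$ in the $\ell^2$-target — is exactly right, and the ``in particular'' step is a straightforward absorption of $\sqrt N$ factors into the $X^N$-norm as you describe.

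The one place where your argument is off is the alleged ``main technical obstacle'' for the paraproducts $T_uv$, $T_vu$. There is no genuine $\ell^2\to\ell^1$ gap there, and you certainly do not need to invoke parabolic Duhamel smoothing to close it (in fact the Duhamel integral gains nothing beyond what Lemma~\ref{lemm:linear} already gives: if you estimate $\int_{t_0}^t e^{(t-\tau)\Delta}\mathbb{P}\div(T_uv)\,d\tau$ directly via H\"older in time you land back exactly on $\n{T_uv}_{\widetilde{L^{N/2}}(\dB^{4/N}_{1,1})}$). The resolution is that the standard paraproduct estimate with $s_1<0$ (e.g.\ Bahouri--Chemin--Danchin, Theorem~2.47) places $u$ in $\dB^{s_1}_{\infty,q_1}$ and $v$ in $\dB^{s_2}_{p,q_2}$ with target summability $1/q=\min\{1,\,1/q_1+1/q_2\}$: since here both factors sit in $\ell^2$-based Besov spaces, $1/q_1+1/q_2=1$ and one already reaches the $\ell^1$-target, essentially by Cauchy--Schwarz applied to the low-frequency $\times$ high-frequency product. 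Because the relevant constant is $\sim 1/(-s_1)=1/(1-2/N)\leq 3$ for all $N\geq 3$, the paraproduct contribution carries a constant uniform in $N$. (The statement of Lemma~\ref{lemm:nonlin}(1) in the paper pins the third index of the high-frequency factor to that of the target, which is why it appears, misleadingly, as though an extra conversion is needed; with the BCD form of the estimate nothing extra is required.) Consequently the factors $CN$ and $C\sqrt N$ originate entirely in the remainder $R(u,v)$, exactly as your $\ell^\alpha$-Young-inequality computation shows, and your last paragraph's attempt to extract an additional $N$ from the paraproducts by ``parabolic Duhamel smoothing'' should simply be deleted.
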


\begin{proof}[Proof of Proposition \ref{prop:2D}]
    Let $N \geq 3$, $0<\eta \leq 1/2$, and let $M \geq 10$ be a constant to be determined later independently of $\eta$ and $N$.
    Let $a \in \dB_{1,1}^1(\mathbb{R}^2)$ with $\div a= 0$ and $\n{a}_{\dB_{1,1}^1} \leq \eta^3$.
    We choose $\psi \in \mathscr{S}(\mathbb{R}^2)$ so that $\widehat{\psi}$ is a radial monotone non-increasing function satisfying ${\bf 1}_{\Mp{|\xi| \leq 1}} \leq \widehat{\psi} \leq {\bf 1}_{\Mp{|\xi| \leq 2}}$.
    We also use a cut-off function $\chi \in C_c^{\infty}([t_0,\infty))$ on the time line satisfying ${\bf 1}_{\Mp{t_0+1,t_0+2^{2N}-1}} \leq \widehat{\psi} \leq {\bf 1}_{\Mp{t_0,t_0+2^{2N}}}$.
    Then, we define
    \begin{align}
        f_N(t,x)
        :=
        \frac{\eta}{\sqrt{N}}
        \chi(t)g(x);
        \qquad
        g(x):=
        \Delta\nabla^{\perp}
        \sp{\psi(x)\cos(Mx_1)}.
    \end{align}
    Note that it holds $\supp \widehat{f_N(t)} \subset \Mp{\xi \in \mathbb{R}^2\ ;\ M-2 \leq |\xi| \leq M+2}$.

    To construct the desired solution $u_N$ to \eqref{eq:fN-Ns}, we decompose it into three parts: $u_N=u_N^{(1)}+u_N^{(2)}+\widetilde{u}_N$, where $u_N^{(1)}$ and $u_N^{(2)}$ are the first and second iterations defined by
    \begin{align}
        u^{(1)}_N(t)
        &:=
        \int_{t_0}^{t} e^{(t-\tau)\Delta}f_N(\tau)d\tau,
        \\
        u^{(2)}_N(t)
        &:=
        -
        \int_{t_0}^{t} e^{(t-\tau)\Delta}\mathbb{P}\div\sp{u^{(1)}_N \otimes u^{(1)}_N}(\tau)d\tau,
    \end{align}
    and $\widetilde{u}_N$ should solve
    \begin{align}\label{eq:NS-tilde-2D}
        \begin{cases}
            \begin{aligned}
            \partial_t \widetilde{u}_N
            - \Delta \widetilde{u}_N
            &
            + \displaystyle\sum_{1\leq k,\ell \leq 2, (k,\ell)\neq (1,1)} \mathbb{P} \div \sp{u_N^{(k)} \otimes u_N^{(\ell)}}\\
            &
            + \displaystyle\sum_{k=1}^2 
            \mathbb{P} \div \sp{u_N^{(k)} \otimes \widetilde{u}_N + \widetilde{u}_N \otimes u_N^{(k)}} 
            + \mathbb{P} \div \sp{\widetilde{u}_N \otimes \widetilde{u}_N}
            = 0,
            \end{aligned}
            \\
            \div \widetilde{u}_N = 0, 
            \\
            \widetilde{u}_N(t_0,x)=a(x).
        \end{cases}
    \end{align}
    For the estimate of the external force, we immediately see by the definition and Lemma \ref{lemm:cos} that 
    \begin{align}
        &
        \n{f_N}_{\widetilde{L^{\infty}}(t_0,\infty;\dB_{1,1}^{-1})}
        \leq
        \frac{\eta}{\sqrt{N}}
        \n{g}_{\dB_{1,1}^{-1}}
        \leq 
        \frac{CM^2\eta}{\sqrt{N}},
        \\
        &
        \n{f_N}_{\widetilde{L^N}(t_0,\infty;\dB_{1,2}^{-1+2/N})}
        \leq
        \frac{\eta}{\sqrt{N}}
        \n{\chi}_{L^N(t_0,\infty)}
        \n{g}_{\dB_{1,2}^{-1+2/N}}
        \leq 
        \frac{CM^3\eta}{\sqrt{N}}.
    \end{align}
    Combining this with Lemma \ref{lemm:linear}, we have the following estimates of the first iteration:
    \begin{align}
        &
        \n{u_N^{(1)}}_{\widetilde{L^{\infty}}(t_0,\infty;\dB_{1,1}^{1})}
        \leq
        C
        \n{f_N}_{\widetilde{L^{\infty}}(t_0,\infty;\dB_{1,1}^{-1})}
        \leq
        \frac{CM^2\eta}{\sqrt{N}},
        \\
        &
        \n{u_N^{(1)}}_{\widetilde{L^{N}}(t_0,\infty;\dB_{1,2}^{1+2/N})}
        \leq
        C
        \n{f_N}_{\widetilde{L^{N}}(t_0,\infty;\dB_{1,2}^{-1+2/N})}
        \leq
        \frac{CM^3\eta}{\sqrt{N}},
    \end{align}
    which implies 
    \begin{align}
        \n{u_N^{(1)}}_{X^N(t_0,\infty)}
        \leq
        CM^3\eta.
    \end{align}
    Then, making use of Lemma \ref{lemm:nonlin-2dim}, we have 
    \begin{align}
        \n{u_N^{(2)}}_{X^{N}(t_0,\infty)}
        \leq
        C
        \n{u_N^{(1)}}_{X^N(t_0,\infty)}^2
        \leq
        CM^6\eta^2.
    \end{align}
    To construct the perturbed solution, we consider a mapping
    \begin{align}
        \Psi[v](t)
        :={}&
        e^{(t-t_0)\Delta}a
        -
        \displaystyle\sum_{1\leq k,\ell \leq 2, (k,\ell)\neq (1,1)} 
        \int_{t_0}^t
        e^{(t-\tau)\Delta}
        \mathbb{P} \div \sp{u_N^{(k)} \otimes u_N^{(\ell)}}(\tau)d\tau\\
        &
        - 
        \displaystyle\sum_{k=1}^2 
        \int_{t_0}^t
        e^{(t-\tau)\Delta}
        \mathbb{P} \div \sp{u_N^{(k)} \otimes v + v\otimes u_N^{(k)} }(\tau)d\tau  
        \\
        &
        -
        \int_{t_0}^t
        e^{(t-\tau)\Delta}
        \mathbb{P} \div \sp{v \otimes v}(\tau)d\tau
    \end{align}
    on a complete metric space 
    \begin{align}
        &
        Y^N
        :=
        \Mp{v \in X^N(t_0,\infty)\ ;\ \n{v}_{X^N(t_0,\infty)}\leq 2C_0M^{12}\eta^3},
        \\
        &
        d_{Y^N}(v,w):=\n{v-w}_{X^N(t_0,\infty)},
    \end{align}
    where the positive constant $C_0$ is chosen so that the following estimates hold:
    \begin{align}
        &
        \n{e^{(t-t_0)\Delta}a}_{X^N(t_0,\infty)}
        +
        \displaystyle\sum_{\substack{1\leq k,\ell \leq 2, \\(k,\ell)\neq (1,1)}} 
        \n{\int_{t_0}^t
        e^{(t-\tau)\Delta}
        \mathbb{P} \div \sp{u_N^{(k)} \otimes u_N^{(\ell)}}(\tau)d\tau}_{X^N(t_0,\infty)}
        \\
        &\quad 
        \leq
        C\n{a}_{\dB_{1,1}^1}
        +
        C
        \displaystyle\sum_{\substack{1\leq k,\ell \leq 2, \\(k,\ell)\neq (1,1)}}
        \n{u_N^{(k)}}_{X^N(t_0,\infty)}
        \n{u_N^{(\ell)}}_{X^N(t_0,\infty)}
        \leq
        C_0M^{12}\eta^3,
        \\
        %%%%%%%%%%%%%%%%%%%
        &
        \displaystyle\sum_{k=1}^2 
        \n{\int_{t_0}^t
        e^{(t-\tau)\Delta}
        \mathbb{P} \div \sp{u_N^{(k)} \otimes v + v\otimes u_N^{(k)} }(\tau)d\tau}_{X^N(t_0,\infty)}
        \\
        &\quad 
        \leq
        C
        \displaystyle\sum_{k=1}^2
        \n{u_N^{(k)}}_{X^N(t_0,\infty)}
        \n{v}_{X^N(t_0,\infty)}
        \leq
        C_0M^6\eta\n{v}_{X^N(t_0,\infty)},
        \\
         %%%%%%%%%%%%%%%%%%%
        &
        \n{\int_{t_0}^t
        e^{(t-\tau)\Delta}
        \mathbb{P} \div \sp{v \otimes v}(\tau)d\tau}_{X^N(t_0,\infty)}
        \leq
        C_0\n{v}_{X^N(t_0,\infty)}^2.
    \end{align}
    Choosing $\eta$ so small that 
    \begin{align}
        2C_0M^{6}\eta
        +
        8C_0^2M^{12}\eta^3
        \leq 1,
    \end{align}
    we then see by the above estimates that
    \begin{align}
        \n{\Psi[v]}_{X^N(t_0,\infty)}
        \leq {}&
        C_0M^{12}\eta^3 
        + 
        C_0M^6\eta\n{v}_{X^N(t_0,\infty)}
        +
        C_0\n{v}_{X^N(t_0,\infty)}^2
        \\
        \leq {}&
        C_0M^{12}\eta^3 
        + 
        2C_0^2M^{18}\eta^4
        +
        4C_0^3M^{24}\eta^6
        \\
        \leq {}&
        2C_0M^{12}\eta^3
    \end{align}
    and
    \begin{align}
        &
        \n{\Psi[v]-\Psi[w]}_{X^N(t_0,\infty)}
        \\
        &\quad 
        \leq {}
        \sp{
        C_0
        M^6\eta
        +
        C_0\n{v}_{X^N(t_0,\infty)}
        +
        C_0\n{w}_{X^N(t_0,\infty)}
        }
        \n{v-w}_{X^N(t_0,\infty)}
        \\
        &
        \quad 
        \leq{}
        \sp{
        C_0
        M^6\eta
        +
        4C_0^2M^{12}\eta^3
        }
        \n{v-w}_{X^N(t_0,\infty)}
        \\
        &
        \quad
        \leq
        \frac{1}{2}
        \n{v-w}_{X^N(t_0,\infty)}
    \end{align}
    for all $v,w \in X^N(t_0,\infty)$.
    Hence, from the contraction mapping principle, we find a unique element $\widetilde{u}_N \in X^N(t_0,\infty)$ solving $\widetilde{u}_N=\Psi[\widetilde{u}_N]$, which means $\widetilde{u}_N$ is a mild solution to \eqref{eq:NS-tilde-2D}.
    Thus, we have a solution $u_N=u^{(1)}_N+u^{(2)}_N+\widetilde{u}_N$ to \eqref{eq:fN-Ns}.

    In what follows, we shall show that the solution $u_N$ constructed above satisfies the desired properties \eqref{eq:uNbound}.
    On the decay of the solution as $t \to \infty$,
    we see for $t>\tilde{T}>T>t_0+2^{2N}$ that\footnote{Note that $\int_T^t e^{(t-\tau)\Delta} \mathbb{P}\div f_N(\tau)d\tau=0$ holds due to the support of $f_{\delta,\eta}$ for the time variable.} 
    \begin{align}
        u_N(t)
        =
        e^{(t-T)\Delta}u_N(T)
        -
        \int_T^t
        e^{(t-\tau)\Delta}
        \mathbb{P}
        \div
        (u_N \otimes u_N)(\tau)
        d\tau,
    \end{align}
    which and Lemma \ref{lemm:nonlin-2dim} imply
    \begin{align}
        &
        \n{u_N}_{\widetilde{L^{\infty}}(\tilde{T},\infty;\dB_{1,1}^{1})}\\
        &\quad\leq
        \n{e^{(t-T)\Delta}u_N(T)}_{\widetilde{L^{\infty}}(\tilde{T},\infty;\dB_{1,1}^{1})}
        +
        CN
        \n{u_N}_{\widetilde{L^N}(T,\infty;\dB_{1,2}^{1+2/N})}^2\\
        &\quad 
        \leq
        C
        \sum_{j \in \mathbb{Z}}
        e^{-c2^{2j}(T-\tilde{T})}
        2^{(1+2/N)j}
        \n{\Delta_j u_N(T)}_{L^1}
        +
        CN
        \n{u_N}_{\widetilde{L^N}(T,\infty;\dB_{1,2}^{1+2/N})}^2.
    \end{align}
    Thus, by the dominated convergence theorem, we have 
    \begin{align}
        \limsup_{\tilde{T} \to \infty}
        \n{u_N}_{\widetilde{L^{\infty}}(\tilde{T},\infty;\dB_{1,1}^{1})}
        \leq 
        CN
        \n{u_N}_{\widetilde{L^N}(T,\infty;\dB_{1,2}^{1+2/N})}^2
        \to 0
    \end{align}
    as $T \to \infty$.
    Finally, we prove the uniform boundedness for $N$ from below.
    We decompose $u_N^{(2)}$ as 
    \begin{align}
        u_N^{(2)}(t)
        ={}&
        -
        \frac{\eta^2}{N}
        \int_{t_0+1}^{t_0+2^{2N}-1}
        e^{(t-\tau)\eta}\mathbb{P}\div\sp{\pnabla\Theta \otimes \pnabla\Theta}d\tau\\
        &
        -
        \int_{[t_0,t]\setminus [t_0+1,t_0+2^{2N}-1]}
        e^{(t-\tau)\eta}
        \mathbb{P}
        \div
        \sp{u_N^{(1)}\otimes u_N^{(1)}}(\tau)
        d\tau\\
        &
        -
        \frac{\eta^2}{N}
        \int_{t_0}^t
        e^{(t-\tau)\eta}\mathbb{P}\div
        \left(
        e^{\tau\Delta}\pnabla\Theta \otimes \pnabla\Theta
        \right.
        \\
        &\qquad \qquad \qquad
        \left.
        +
        \pnabla\Theta \otimes e^{\tau\Delta}\pnabla\Theta
        +
        e^{\tau\Delta}\pnabla\Theta \otimes e^{\tau\Delta}\pnabla \Theta\right)d\tau\\
        =:{}&
        u_N^{(2;1)}(t)+u_N^{(2;2)}(t)+u_N^{(2;3)}(t),
    \end{align}
    where $\Theta(x):=\psi(x)\cos(Mx_1)$.
    For the estimate of $u_N^{(2;1)}(t)$, we see that 
    \begin{align}
        &\mathscr{F}
        \lp{u_N^{(2;1)}(t_0+2^{2N})}(\xi)
        \\
        &\quad 
        ={}
        -
        \frac{\eta^2}{N}
        \int_{t_0+1}^{t_0+2^{2N}-1}
        e^{-(t_0+2^{2N}-\tau)|\xi|^2}
        d\tau
        \mathscr{F}
        \lp{\mathbb{P}
        \div
        \sp{\Theta \otimes \Theta}}(\xi)\label{F-w^(2;1)}\\
        &\quad 
        ={}
        -
        \frac{\eta^2}{N}
        |\xi|^{-2}
        e^{-|\xi|^2}\sp{1-e^{-(2^{2N}-2h)|\xi|^2}}
        \mathscr{F}
        \lp{\mathbb{P}
        \div
        \sp{\Theta \otimes \Theta}}(\xi),
    \end{align}
    Here, since the direct calculation yields
    \begin{align}\label{direct-1}
    &
    \operatorname{div}( \nabla^{\perp}\Theta \otimes \nabla^{\perp}\Theta )
    \\
    &\quad={}
    \frac{M^2}{2}
    \begin{pmatrix}
        0 \\
        \partial_{x_2}\sp{\psi(x)^2}
    \end{pmatrix}
    + \frac{1}{2}\sp{ 1 + \cos(2Mx_1)}\operatorname{div}( \nabla^{\perp}\psi \otimes \nabla^{\perp}\psi )\\
    &\qquad
    - \frac{M}{2}\sin(2Mx_1)
    \begin{pmatrix}
        \sp{\partial_{x_2}\psi(x)}^2 - \psi(x) \partial_{x_2}^2 \psi(x) \\
        \psi(x) \partial_{x_1} \partial_{x_2}\psi(x) - \partial_{x_1}\psi(x) \partial_{x_2}\psi(x)
    \end{pmatrix}
    \end{align}
    and the Fourier transforms of the terms with $\cos(2Mx_1)$ and $\sin(2Mx_1)$ in the above right hand side are supported in $\{\xi \in \mathbb{R}^2 \ ;\ 2(M-2) \leq |\xi| \leq 2(M+2)\}$,
    it holds 
    \begin{align}
        &
        \mathscr{F}
        \lp{\mathbb{P}\operatorname{div}( \nabla^{\perp}\Theta \otimes \nabla^{\perp}\Theta )}(\xi)\\
        &\quad
        ={}
        \frac{M^2}{2}
        \mathscr{F}\lp{
        \mathbb{P}
        \begin{pmatrix}
            0 \\ \partial_{x_2}(\psi^2)
        \end{pmatrix}
        }(\xi)
        +
        \frac{1}{2}
        \mathscr{F}
        \lp{\mathbb{P}\operatorname{div} ( \nabla^{\perp} \psi \otimes \nabla^{\perp} \psi )}(\xi)\\
        &\quad
        ={}
        \frac{M^2}{2}
        \mathscr{F}\lp{
        \begin{pmatrix}
            \partial_{x_1}\partial_{x_2}^2(-\Delta)^{-1}(\psi^2) \\ 
            \left(1+\partial_{x_2}^2(-\Delta)^{-1}\right)\partial_{x_2}(\psi^2)
        \end{pmatrix}
        }(\xi)\\
        &\qquad
        +
        \frac{1}{2}
        \mathscr{F}
        \lp{\mathbb{P}\operatorname{div} ( \nabla^{\perp} \psi \otimes \nabla^{\perp} \psi )}(\xi)
    \end{align}
    for all $\xi \in \mathbb{R}^2$ with $|\xi| \leq 2$.
    Thus, we have 
    \begin{align}\label{est:Theta}
        \begin{split}
        &
        \abso{\mathscr{F}
        \lp{\mathbb{P}\operatorname{div}( \nabla^{\perp}\Theta \otimes \nabla^{\perp}\Theta )}(\xi)}\\
        &\quad
        \geq{}
        \frac{M^2}{2}
        \abso{\mathscr{F}
        \lp{ 
        \left(1+\partial_{x_2}^2(-\Delta)^{-1}\right)\partial_{x_2}(\psi^2)
        }(\xi)}\\
        &\qquad
        -
        \frac{1}{2}
        \abso{\mathscr{F}
        \lp{ \mathbb{P} \operatorname{div} ( \nabla^{\perp} \psi \otimes \nabla^{\perp} \psi )}(\xi)}\\
        &\quad
        \geq {}
        \frac{M^2}{2}
        \sp{1-\frac{\xi_2^2}{|\xi|^2}}
        |\xi_2|
        \sp{\widehat{\psi}*\widehat{\psi}}(\xi)
        -
        C|\xi|
        \abso{\mathscr{F}
        \lp{  \nabla^{\perp} \psi \otimes \nabla^{\perp} \psi  }(\xi)}\\
        &\quad
        \geq {}
        c
        M^2
        2^j
        \sp{\widehat{\psi}*\widehat{\psi}}(\xi)
        -
        C2^j
        \abso{\mathscr{F}
        \lp{  \nabla^{\perp} \psi \otimes \nabla^{\perp} \psi  }(\xi)}
        \end{split}
    \end{align}
    for all $\xi \in A_j:=\Mp{\xi \in \mathbb{R}^2 \ ;\ 2^{j-1} \leq |\xi| \leq 2^{j+1}, \ {|\xi|}/{2} \leq |\xi_2| \leq {|\xi|}/{\sqrt{2}}}$ with $j \leq 0$.
    Hence, it holds by \eqref{F-w^(2;1)} that
    \begin{align}
        &
        \n{u_N^{(2;1)}(t_0+2^{2N})}_{\dB_{\infty,1}^{-1}}
        \geq {}
        \sum_{j \leq 0}2^{-j}\n{\Delta_j u_N^{(2;1)}(t_0+2^{2N})}_{L^{\infty}}\\
        &\quad
        \geq {}
        \sum_{j \leq 0}
        \n{e^{-2^{2j}|x|^2}\Delta_j u_N^{(2;1)}(t_0+2^{2N})}_{L^2}\\
        &\quad
        \geq {}
        c
        \sum_{j \leq 0}
        \n{e^{2^{2j}\Delta_{\xi}}\lp{
        \widehat{\varphi}(2^{-j}\cdot)\mathscr{F}\lp{u_N^{(2;1)}(t_0+2^{2N})}}(\xi)}_{L^2(A_j)}
        \\
        &\quad 
        \geq {}
        \frac{cM^2\eta^2}{N}
        \sum_{-N \leq j \leq 0}
        2^{-j}\sp{ 1 - e^{-c2^{2(j+N)}} }
        \n{e^{2^{2j}\Delta_{\xi}}\lp{\widehat{\varphi}(2^{-j}\cdot)\sp{\widehat{\psi}*\widehat{\psi}}}(\xi)}_{L^2(A_j)}
        \\
        &\qquad
        -
        \frac{C\eta^2}{N}
        \sum_{-N \leq j \leq 0}
        2^{-j}
        \n{\Delta_j\sp{\nabla^{\perp} \psi \otimes \nabla^{\perp} \psi }}_{L^2(\mathbb{R}^2)}
        \\
        &\quad 
        \geq {}
        \frac{cM^2\eta^2}{N}
        \sum_{-N \leq j \leq 0}
        2^{-j}
        \n{e^{2^{2j}\Delta_{\xi}}\lp{\widehat{\varphi}(2^{-j}\cdot)\sp{\widehat{\psi}*\widehat{\psi}}}(\xi)}_{L^2(A_j)}
        \\
        &\qquad
        -
        C\eta^2\n{\nabla^{\perp} \psi \otimes \nabla^{\perp} \psi }_{\dB_{2,\infty}^{-1}}.
    \end{align}
    It follows from
    \begin{align}
        &
        \sp{\widehat{\psi}*\widehat{\psi}}(2^j\xi)
        ={}
        \int_{|\zeta| \leq 2}
        \widehat{\psi}(2^j\xi-\zeta)\widehat{\psi}(\zeta)
        d\zeta
        \geq{}
        \int_{|\zeta| \leq 1/2}
        d\eta
        >0,
        \\
        &
    e^{2^{2j}\Delta_{\xi}}\lp{\widehat{\varphi}(2^{-j}\cdot)\sp{\widehat{\psi}*\widehat{\psi}}}(\xi)
    \\
    &\quad=
    c2^{-2j}
    \int_{A_j}
    e^{-2^{-2j-2}|\xi-\zeta|^2}
    \widehat{\phi_0}(2^{-j}\zeta)(\widehat{\psi}*\widehat{\psi})(\zeta)d\zeta\\
    &\quad=
    c
    \int_{A_0}
    e^{-2^{-2}|2^{-j}\xi-\widetilde{\zeta}|^2}
    \widehat{\phi_0}(\widetilde{\zeta})(\widehat{\psi}*\widehat{\psi})(2^j\widetilde{\zeta})d\widetilde{\zeta} \qquad (\widetilde{\zeta}=2^{-j}\zeta)\\
    &\quad 
    \geqslant
    c
    \int_{A_0}
    e^{-2^{-2}|2^{-j}\xi-\widetilde{\zeta}|^2}
    \widehat{\phi_0}(\widetilde{\zeta})d\widetilde{\zeta}
    \\
    &\quad 
    \geqslant
    c
    \int_{A_0}
    \widehat{\phi_0}(\widetilde{\zeta})d\widetilde{\zeta}
    =
    c,
    \\
        &
        \n{\nabla^{\perp} \psi \otimes \nabla^{\perp} \psi }_{\dB_{2,\infty}^{-1}}
        \leq
        C
        \n{\nabla^{\perp} \psi \otimes \nabla^{\perp} \psi }_{L^1}
        \leq
        C
        \n{\psi}_{\dot{H}^1}^2
    \end{align}
    that 
    \begin{align}\label{low-w^(2;1)}
        \n{u_N^{(2;1)}(t_0+2^{2N})}_{\dB_{1,1}^{1}}
        \geq {}
        c_1M^2\eta^2
        -
        C_1\eta^2\n{\psi}_{\dot{H}^1}^2
    \end{align}
    for some positive constants $c_1$ and $C_1$ independent of $\eta$, $N$, and $M$.
    For the estimate of $u_N^{(2;2)}(t)$, we have 
    \begin{align}
        &\n{\Delta_ju_N^{(2;2)}(t_0+2^{2N})}_{L^1}
        \\
        &\quad\leq{}
        C
        \sp{\int_{t_0}^{t_0+1}+\int_{t_0+2^{2N}-1}^{t_0+2^{2N}}}
        e^{-c(t-\tau)2^{2j}}
        2^j
        d\tau
        \n{\Delta_j\sp{u_N^{(1)}\otimes u_N^{(1)}}}_{L^{\infty}(t_0,t_0+2^{2N};L^1)}
        \\
        &\quad\leq{}
        C
        \sp{ 1 - e^{-c2^{2j}} }
        2^{-j}
        \n{\Delta_j\sp{u_N^{(1)}\otimes u_N^{(1)}}}_{L^{\infty}(t_0,t_0+2^{2N};L^1)}
        \\
        &\quad\leq{}
        C
        2^{j}
        \n{\Delta_j\sp{u_N^{(1)}\otimes u_N^{(1)}}}_{L^{\infty}(t_0,t_0+2^{2N};L^1)},
    \end{align}
    which implies
    \begin{align}\label{up-w^(2;2)}
        \n{u_N^{(2;2)}(t_0+2^{2N})}_{\dB_{1,1}^{1}}
        \leq{}&
        C
        \n{u_N^{(1)} \otimes u_N^{(1)}}_{\widetilde{L^{\infty}}(t_0,t_0+2^{2N};\dB_{1,1}^{2})}\\
        \leq{}&
        C
        \n{u_N^{(1)}}_{\widetilde{L^{\infty}}(t_0,t_0+2^{2N};\dB_{1,1}^{2})}
        \\
        \leq{}&
        C\n{\Theta}_{\dB_{1,1}^{0}}^2
        \leq{}
        C\frac{M^{6}\eta^2}{N}.
    \end{align}
    Using Lemmas \ref{lemm:linear} and \ref{lemm:nonlin}, we have
    \begin{align}\label{up-w^(2;3)}
        \n{u_N^{(2;3)}}_{\widetilde{L^{\infty}}(t_0,t_0+2^{2N};\dB_{1,1}^{1})}
        \leq{}&
        C
        \n{\div\sp{e^{\tau\Delta}\pnabla\Theta \otimes \pnabla\Theta}}_{\widetilde{L^2}(t_0,t_0+2^{2N};\dB_{1,1}^{0})}\\
        &+
        C
        \n{\div\sp{\pnabla\Theta \otimes e^{\tau\Delta}\pnabla\Theta}}_{\widetilde{L^2}(t_0,t_0+2^{2N};\dB_{1,1}^{0})}\\
        &+
        C
        \n{\div\sp{e^{\tau\Delta}\pnabla\Theta \otimes e^{\tau\Delta}\pnabla \Theta}}_{\widetilde{L^2}(t_0,t_0+2^{2N};\dB_{1,1}^{0})}\\
        \leq{}&
        C
        \n{\pnabla\Theta}_{\dB_{1,1}^{1}} 
        \n{e^{\tau\Delta}\pnabla\Theta}_{\widetilde{L^2}(t_0,t_0+2^{2N};\dB_{1,1}^{0})}
        \\
        &
        +
        C\n{e^{\tau\Delta}\pnabla\Theta}_{\widetilde{L^2}(t_0,t_0+2^{2N};\dB_{1,1}^{0})}^2
        \\
        \leq{}&
        C
        \n{\pnabla\Theta}_{\dB_{1,1}^{1}}^2\\
        \leq{}&
        C\frac{M^{4}\eta^2}{N}.
    \end{align}
    Hence, it follows from 
    \begin{align}
        u_N
        ={}&
        u_N^{(1)}
        +
        u_N^{(2)}
        +
        \widetilde{u}_{N}\\
        ={}&
        u_N^{(1)}
        +
        u_N^{(2;1)}
        +
        u_N^{(2;2)}
        +
        u_N^{(2;3)}
        +
        \widetilde{u}_{N},
    \end{align}
    \eqref{low-w^(2;1)}, \eqref{up-w^(2;2)}, and \eqref{up-w^(2;3)} that 
    \begin{align}
        &\n{u_N(t_0+2^{2N})}_{\dB_{1,1}^{1}}
        \\
        &\quad
        \geq{}
        \n{u_N^{(2;1)}(t_0+2^{2N})}_{\dB_{1,1}^{1}}
        -
        \n{u_N^{(2;2)}(t_0+2^{2N})}_{\dB_{1,1}^{1}}
        -
        \n{u_N^{(2;3)}}_{\widetilde{L^{\infty}}(t_0,t_0+2^{2N};\dB_{1,1}^{1})}
        \\
        &\qquad-
        \n{u_N^{(1)}}_{\widetilde{L^{\infty}}(t_0,t_0+2^{2N};\dB_{1,1}^{1})}
        -
        \n{\widetilde{u}_{N}}_{X^N(t_0,\infty)}\\
        &\quad\geq{}
        c_1M^2\eta^2
        -
        C_1\eta^2\n{\psi}_{\dot{H}^1}^2
        -
        C_2\frac{M^{6}\eta^2}{N}
        \\
        &\qquad-
        C_2\frac{M\eta}{\sqrt{N}}
        -
        2C_0M^{12}\eta^3\\
        &\quad
        \geq{}
        c_1M^2\eta^2
        -
        C_1\eta^2\n{\psi}_{\dot{H}^1}^2
        -
        C_2M^{6}\eta^4
        -
        C_2M\eta^3
        -
        2C_0M^{12}\eta^3.
    \end{align}
    for some positive constant $C_2$.
    Now, we choose $M$ and re-choose $\eta$ so  that
    \begin{gather}
        c_1M^2
        -
        C_1\n{\psi}_{\dot{H}^1}^2
        \geq 2, \\
        C_2M^{6}\eta^2
        +
        C_2M\eta
        +
        2C_0M^{12}\eta
        \leq 
        1.
    \end{gather}
    Then, we have 
    \begin{align}
        \n{u_N(t_0+2^{2N})}_{\dB_{1,1}^{1}}
        \geq \eta^2.
    \end{align}
    Hence, we complete the proof.
\end{proof}
\subsection{Proof of Theorem \ref{thm:2D}}
Now, we are in a position to prove Theorem \ref{thm:2D}.
\begin{proof}[Proof of Theorem \ref{thm:2D}]
Let $0 < \varepsilon \leq C_*/\sqrt{3}$ and $N_{k,\varepsilon}=(kC_*)^2/\varepsilon^2$ ($k \in \mathbb{N}$).
Using Proposition \ref{prop:2D} with $t_0=0$ and $N=N_{1,\varepsilon}$, we have the external force $f_{N_{1,\varepsilon}} \in \widetilde{C}([0,\infty);\dB_{1,1}^{-1}(\mathbb{R}^2))$ with $f_{N_{1,\varepsilon}}(0)=0$ and $f_{N_{1,\varepsilon}}(t)=0$ for all $t \geq T_{1,\varepsilon}$ and solution $u_{N_{1,\varepsilon}} \in \widetilde{C}([0,\infty);\dB_{1,1}^1(\mathbb{R}^2))$ satisfying 
\begin{align}
    \n{f_{N_{1,\varepsilon}}}_{\widetilde{L^{\infty}}(0,\infty;\dB_{1,1}^{-1})}
    \leq \varepsilon,
    \qquad
    \lim_{t \to \infty}\n{u_{N_{1,\varepsilon}}(t)}_{\dB_{1,1}^1}=0,
    \qquad
    \n{u_{N_{1,\varepsilon}}(T_{1,\varepsilon})}_{\dB_{\infty,1}^{-1}}
    \geq c_*,
\end{align}
where $T_{1,\varepsilon}:=2^{2N_{1,\varepsilon}}$.
Let us choose $\widetilde{T}_{1,\varepsilon}>T_{1,\varepsilon}$ so small that
\begin{align}
    \n{u_{1,\varepsilon}(\widetilde{T}_{1,\varepsilon})}_{\dB_{1,1}^1} \leq \delta_*.
\end{align}
Next, we apply Proposition \ref{prop:2D} with $t_0=\widetilde{T}_{1,\varepsilon}$, $a=u_{1,\varepsilon}(\widetilde{T}_{1,\varepsilon})$, and $N=N_{2,\varepsilon}$, we have the external force $f_{N_{2,\varepsilon}} \in \widetilde{C}([\widetilde{T}_{1,\varepsilon},\infty);\dB_{1,1}^{-1}(\mathbb{R}^2))$ with $f_{N_{2,\varepsilon}}(\widetilde{T}_{1,\varepsilon})=0$ and $f_{N_{2,\varepsilon}}(t)=0$ for all $t \geq T_{2,\varepsilon}$ and solution $u_{N_{2,\varepsilon}} \in \widetilde{C}([\widetilde{T}_{1,\varepsilon},\infty);\dB_{1,1}^1(\mathbb{R}^2))$ satisfying 
\begin{align}
    \n{f_{N_{2,\varepsilon}}}_{\widetilde{L^{\infty}}(\widetilde{T}_{1,\varepsilon},\infty;\dB_{1,1}^{-1})}
    \leq \frac{\varepsilon}{2},
    \qquad
    \lim_{t \to \infty}\n{u_{N_{2,\varepsilon}}(t)}_{\dB_{1,1}^1}=0,
    \qquad
    \n{u_{N_{2,\varepsilon}}(T_{2,\varepsilon})}_{\dB_{\infty,1}^{-1}}
    \geq c_*,
\end{align}
where $T_{2,\varepsilon}:=\widetilde{T}_{1,\varepsilon}+2^{2N_{2,\varepsilon}}$.
Let us choose $\widetilde{T}_{2,\varepsilon}>T_{2,\varepsilon}$ so small that
\begin{align}
    \n{u_{2,\varepsilon}(\widetilde{T}_{2,\varepsilon})}_{\dB_{1,1}^1} \leq \frac{\delta_*}{2}.
\end{align}
Repeating the same procedure inductively, we have two time sequences $\{T_{k,\varepsilon}\}_{k=1}^{\infty}$ and $\{\widetilde{T}_{k,\varepsilon}\}_{k=0}^{\infty}$, a family of external forces $f_{N_{k,\varepsilon}} \in \widetilde{C}([\widetilde{T}_{k-1,\varepsilon},\infty);\dB_{1,1}^{-1}(\mathbb{R}^2))$, 
and 
corresponding
solutions $u_{N_{k,\varepsilon}} \in \widetilde{C}([\widetilde{T}_{k-1,\varepsilon},\infty);\dB_{1,1}^1(\mathbb{R}^2))$ satisfying
\begin{align}
    &
    \widetilde{T}_{k,\varepsilon}>T_{k,\varepsilon} = \widetilde{T}_{k-1,\varepsilon} + 2^{2N_{k,\varepsilon}},
    \qquad \widetilde{T}_{0,\varepsilon}=0,
    \\
    &
    f_{N_{k,\varepsilon}}(\widetilde{T}_{k-1,\varepsilon})
    =0,
    \quad
    f_{N_{k,\varepsilon}}(t)
    =0\quad {\rm for\ all} \quad t \geq T_{k,\varepsilon},
    \quad 
    \n{f_{N_{k,\varepsilon}}}_{\widetilde{L^{\infty}}(\widetilde{T}_{k-1,\varepsilon},\infty;\dB_{1,1}^{-1})}
    \leq \frac{\varepsilon}{k},
    \\
    &
    u_{N_{k-1,\varepsilon}}(\widetilde{T}_{k-1,\varepsilon})
    =
    u_{N_{k,\varepsilon}}(\widetilde{T}_{k-1,\varepsilon}),
    \quad 
    \n{u_{N_{k,\varepsilon}}(T_{k,\varepsilon})}_{\dB_{\infty,1}^{-1}}
    \geq c_*,
    \quad
    \n{u_{k,\varepsilon}(\widetilde{T}_{k,\varepsilon})}_{\dB_{1,1}^1} \leq \frac{\delta_*}{k}.
\end{align}
Letting
\begin{align}
    &
    f_{\varepsilon}:= \sum_{k=1}^{\infty}f_{N_{k,\varepsilon}}
    \in \widetilde{C}([0,\infty);\dB_{1,1}^{-1}(\mathbb{R}^2)),
    \\
    &
    u_{\varepsilon}:= \sum_{k=1}^{\infty}u_{N_{k,\varepsilon}}{\bf 1}_{[\widetilde{T}_{k-1,\varepsilon},\widetilde{T}_{k,\varepsilon}]}
    \in \widetilde{C}([0,\infty);\dB_{1,1}^{1}(\mathbb{R}^2)),
\end{align}
we see that $(f_{\varepsilon},u_{\varepsilon})$ solves \eqref{eq:f-NS} and satisfies
\begin{align}
    \n{f}_{\widetilde{L^{\infty}}(\widetilde{T}_{k-1,\varepsilon},\widetilde{T}_{k,\varepsilon};\dB_{1,1}^1)}
    \leq\frac{\varepsilon}{k},
    \quad
    \n{u_{\varepsilon}(T_{k,\varepsilon})}_{\dB_{\infty,1}^{-1}}
    \geq c_*,
    \quad 
    \n{u_{\varepsilon}(\widetilde{T}_{k,\varepsilon})}_{\dB_{1,1}^1} \leq \frac{\delta_*}{k}.
\end{align}
Hence, we complete the proof.
\end{proof}

\noindent
{\bf Data availability.} \\
Data sharing not applicable to this article as no dataset was generated or analyzed during the current study.

\noindent
{\bf Conflict of interest.} \\
The authors have declared no conflicts of interest.

\noindent
{\bf Acknowledgments.} \\
The first author was supported by JSPS KAKENHI, Grant Number JP25K17279. 
The second author was supported by JSPS KAKENHI, Grant Number JP24K16946.

\appendix
\def\thesection{\Alph{section}}

\section{Proof of Proposition \ref{prop:stab}}\label{sec:pf_prop}
In this section, we provide the proof of Proposition \ref{prop:stab}.
\begin{proof}[Proof of Proposition \ref{prop:stab}]
Let $n$, $p$, and $q$ be as \eqref{sta:expo}.
We first focus on the solvability of \eqref{eq:f-NS}.
Let us consider the map
\begin{align}
    \mathcal{S}[u](t)
    :=
    e^{t\Delta}a
    +
    \int_0^t e^{(t-\tau)\Delta} \mathbb{P} f(\tau) d\tau
    -
    \int_0^t e^{(t-\tau)\Delta} \mathbb{P} \div\sp{u \otimes u}(\tau) d\tau.
\end{align}
It follows from Lemmas \ref{lemm:linear} and \ref{lemm:nonlin-ndim} that 
\begin{align}
    \n{\mathcal{S}[u]}_{\widetilde{L^{\infty}}(0,\infty;\dB_{p,q}^{{n}/{p}-1})}
    \leq{}&
    C_0\n{a}_{\dB_{p,q}^{{n}/{p}-1}}
    +
    C_0\n{f}_{\widetilde{L^{\infty}}(0,\infty;\dB_{p,q}^{{n}/{p}-3})}\\
    &
    +
    C_0
    \n{u}_{\widetilde{L^{\infty}}(0,\infty;\dB_{p,q}^{{n}/{p}-1})}^2,\\
    %%%%%%%%%%%%%%%
    \n{\mathcal{S}[u_1]-\mathcal{S}[u_2]}_{\widetilde{L^{\infty}}(0,\infty;\dB_{p,q}^{{n}/{p}-1})}
    \leq{}&
    C_0
    \n{u_1}_{\widetilde{L^{\infty}}(0,\infty;\dB_{p,q}^{{n}/{p}-1})}
    \n{u_1-u_2}_{\widetilde{L^{\infty}}(0,\infty;\dB_{p,q}^{{n}/{p}-1})}
    \\
    &
    +
    C_0
    \n{u_2}_{\widetilde{L^{\infty}}(0,\infty;\dB_{p,q}^{{n}/{p}-1})}
    \n{u_1-u_2}_{\widetilde{L^{\infty}}(0,\infty;\dB_{p,q}^{{n}/{p}-1})}
\end{align}
for all $u,u_1,u_2 \in \widetilde{C}([0,\infty);\dB_{p,q}^{{n}/{p}-1}(\mathbb{R}^n))$ with some positive constant $C_0=C_0(n,p,q)$.
Hence, by the contraction mapping principle, if $a$ and $f$ satisfies 
\begin{align}
    \n{a}_{\dB_{p,q}^{{n}/{p}-1}}
    +
    \n{f}_{\widetilde{L^{\infty}}(0,\infty;\dB_{p,q}^{{n}/{p}-3})}
    \leq
    \frac{1}{8C_0^2},
\end{align}
there exists a solution $u \in \widetilde{C}([0,\infty);\dB_{p,q}^{{n}/{p}-1}(\mathbb{R}^n))$ to \eqref{eq:f-NS} satisfying
\begin{align}
    \n{u}_{\widetilde{L^{\infty}}(0,\infty;\dB_{p,q}^{{n}/{p}-1})}
    \leq
    2C_0
    \n{a}_{\dB_{p,q}^{{n}/{p}-1}}
    +
    2C_0
    \n{f}_{\widetilde{L^{\infty}}(0,\infty;\dB_{p,q}^{{n}/{p}-3})}.
\end{align}

Next, we prove $u(t)$ converges to zero in $\dB_{p,q}^{{n}/{p}-1}(\mathbb{R}^n)$ as $t \to \infty$.
Let $t>T>0$. 
Then, since 
\begin{align}
    u(t)
    = {}&
    e^{(t-T)\Delta}u(T)
    +
    \int_T^t e^{(t-\tau)\Delta} \mathbb{P} f(\tau) d\tau
    -
    \int_T^t e^{(t-\tau)\Delta} \mathbb{P} \div\sp{u \otimes u}(\tau) d\tau,
\end{align}
we have
\begin{align}
    &\n{u}_{\widetilde{L^{\infty}}(t,\infty;\dB_{p,q}^{n/p-1})}\\
    &\quad\leq
    \n{e^{(\tau-T)\Delta}u(T)}_{\widetilde{L^{\infty}_{\tau}}(t,\infty;\dB_{p,q}^{n/p-1})}
    +
    C_0\n{f}_{\widetilde{L^{\infty}}(T,\infty;\dB_{p,q}^{n/p-3})}
    +
    C_0
    \n{u}^2_{\widetilde{L^{\infty}}(0,\infty;\dB_{p,q}^{n/p-1})}\\
    &\quad\leq{}
    C
    \Mp{\sum_{j \in \mathbb{Z}}\sp{e^{-c(t-T)2^{2j}}2^{\sp{n/p-1}j}\n{\Delta_ju(T)}_{L^p}}^q}^{1/q}\\
    &\qquad
    +
    C_0
    \Mp{\sum_{j \in \mathbb{Z}}\sp{2^{\sp{n/p-1}j}\n{\Delta_j f}_{L^{\infty}(T,\infty;L^p)}}^q}^{1/q}
    +
    \frac{1}{2}
    \n{u}_{\widetilde{L^{\infty}}(T,\infty;\dB_{p,q}^{n/p-1})}.
\end{align}
It follows from the dominated convergence theorem that 
\begin{align}
    \n{e^{(\tau-T)\Delta}u(T)}_{\widetilde{L^{\infty}_{\tau}}(t,\infty;\dB_{p,q}^{n/p-1})}
    =
    \Mp{
    \sum_{j \in \mathbb{Z}}
    \sp{
    e^{-c(t-T)2^{2j}}
    2^{\sp{n/p-1}j}
    \n{\Delta_j a}_{L^p}
    }^q
    }^{1/q}
    \to 
    0
\end{align}
as $t \to \infty$.
Thus, we have
\begin{align}
    \limsup_{ t \to \infty } 
    \n{u}_{\widetilde{L^{\infty}}(t,\infty;\dB_{p,q}^{n/p-1})}
    \leq{}
    C_0
    \n{f}_{\widetilde{L^{\infty}}(T,\infty;\dB_{p,q}^{n/p-3})}
    +
    \frac{1}{2}
    \n{u}_{\widetilde{L^{\infty}}(T,\infty;\dB_{p,q}^{n/p-1})}.
\end{align}
Letting $T \to \infty$, we see that
\begin{align}
    \limsup_{ t \to \infty } 
    \n{u}_{\widetilde{L^{\infty}}(t,\infty;\dB_{p,q}^{n/p-1})}
    \leq{}
    2C_0
    \limsup_{T \to \infty}
    \n{f}_{\widetilde{L^{\infty}}(T,\infty;\dB_{p,q}^{n/p-3})}.
\end{align}
Since it holds for every $j \in \mathbb{Z}$ that
\begin{gather}
    2^{(\frac{n}{p}-1)j}
    \n{\Delta_jf(t)}_{L^p}
    \leq 
    \n{f(t)}_{\dB_{p,q}^{n/p-1}}
    \to 0 
    \quad 
    {\rm as }
    \quad 
    t \to \infty,\\
    \lim_{t \to \infty}
    \n{\Delta_jf}_{L^{\infty}(t,\infty;L^p)}
    =
    \lim_{t \to \infty}
    \n{\Delta_jf(t)}_{L^p}
    =0,
\end{gather}
we see by the reverse Fatou lemma that
\begin{align}
    &
    \limsup_{t\to\infty}\n{u(t)}_{\dB_{p,q}^{n/p-1}}
    \leq
    \limsup_{ t \to \infty } 
    \n{u}_{\widetilde{L^{\infty}}(t,\infty;\dB_{p,q}^{n/p-1})}
    \\
    &\quad
    \leq{}
    2C_0
    \limsup_{T \to \infty}
    \n{f}_{\widetilde{L^{\infty}}(T,\infty;\dB_{p,q}^{n/p-3})}\\
    &\quad
    \leq{}
    2C_0
    \Mp{\sum_{j \in \mathbb{Z}}\sp{2^{\sp{n/p-1}j}\limsup_{T \to \infty}\n{\Delta_jf}_{L^{\infty}(T,\infty;L^p)}}^q}^{1/q}\\
    &\quad
    ={}
    2C_0
    \Mp{\sum_{j \in \mathbb{Z}}\sp{2^{(\sp{n/p-1}j}\lim_{t \to \infty}\n{\Delta_jf(t)}_{L^p}}^q}^{1/q}
    =0.
 \end{align}
Thus, we complete the proof.
\end{proof}

\section{Non-oscillating solutions}\label{sec:non-osci}
In this section, we construct an example where the norm of the external force converges to zero, while the norm of the solution, though not necessarily oscillatory as $t \to \infty$, remains uniformly bounded below by a positive constant independent of $t$. This result is weaker than that of Theorem~\ref{thm:nD} in the sense that it still allows for the possibility that the solution converges to some flow as time tends to infinity. Nevertheless, it has the advantages that the decay rate of the external force can be explicitly determined and the proof is more concise.
\begin{thm} \label{prop:nD-non-osci}
    Let $n \geq 3$.
    Let $p$ and $q$ satisfy either \textup{(1)} or \textup{(2)}:
    \begin{enumerate}
        \item [\textup{(1)}]  $n<p\leq \infty$,\quad $1 \leq q < \infty$
        \item [\textup{(2)}]  $p=n$,\quad $2< q < \infty$.
    \end{enumerate}
    Then, there exist positive constant 
    $\eta=\eta(p,q)$
    and 
    $C=C(p,q)$ 
    such that
    for any $0 < \varepsilon \leq \eta$, there exists a solenoidal external force   
    $f_{\varepsilon} \in \widetilde{C}([t_0,\infty);\dB_{p,q}^{n/p-3}(\mathbb{R}^n))$
    such that 
    \begin{align}\label{decay:g}
        \begin{split}
        &
        \n{f_{\varepsilon}}_{\widetilde{L^{\infty}}(t_0,\infty;\dB_{p,q}^{n/p-3})}
        \leq
        C\eta\varepsilon, 
        \\
        &
        \n{f_{\varepsilon}(t)}_{\dB_{p,q}^{n/p-3}}
        \leq 
        \begin{cases}
        C\eta\varepsilon
        (1+\varepsilon^{-1/(1-n/p)}t)^{-(1-n/p)},\quad &n<p\leq \infty,\\
        C\eta\big(\log(e^{1/\varepsilon^2}+t)\big)^{-1/2}, &p=n,
        \end{cases}
        \end{split}
    \end{align}
    the forced Navier--Stokes equations \eqref{eq:f-NS} with $a=0$ and $f=f_{\varepsilon}$
    possesses a global solution 
    $u_{\varepsilon} \in C((0,\infty);L^{n,\infty}(\mathbb{R}^n))$
    satisfying the estimate
    \begin{align}\label{lim:w}
        \liminf_{t \to \infty}
        \n{u_{\varepsilon}(t)}_{\dB_{\infty,\infty}^{-1}}
        \geq c\eta^2,
        \quad
        \n{u_{\varepsilon}}_{L^{\infty}(0,\infty;L^{n,\infty})}
        \leq
        C\eta.
    \end{align}
\end{thm}
To prove this Proposition, we should show the decay property of the Duhamel integral as follows:
\begin{lemm}\label{lemm:converge}
    Let $n \geq 3$, $1\leq p\leq \infty$, and $1\leq q <\infty$.
    Suppose that $f\in \widetilde{C}([0,\infty); \dB^{n/p-3}_{p,q}(\mathbb{R}^n))$ satisfies
    \begin{align}
    \lim_{t\to\infty}\|f(t)\|_{\dB^{n/p-3}_{p,q}}=0.
    \end{align}
    Then it holds
    \begin{align}
    \lim_{t\to\infty}\left\| \int_0^t e^{(t-\tau)\Delta}\mathbb{P}f(\tau)d\tau \right\|_{\dB^{n/p-1}_{p,q}}=0.\\
    \end{align}
\end{lemm}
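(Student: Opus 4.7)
The plan is to prove $\limsup_{t\to\infty}\|J(t)\|_{\dB^{n/p-1}_{p,q}}\le C\varepsilon$ for every $\varepsilon>0$, where $J(t):=\int_0^t e^{(t-\tau)\Delta}\mathbb{P}f(\tau)\,d\tau$. Given $\varepsilon$, the decay hypothesis provides $T_0$ with $\|f(\tau)\|_{\dB^{n/p-3}_{p,q}}<\varepsilon$ for $\tau\ge T_0$. Moreover, since $f\in C([0,\infty);\dB^{n/p-3}_{p,q})$ and $f(\tau)\to 0$ in norm, the set $\{f(\tau):\tau\ge 0\}\cup\{0\}$ is a compact subset of $\dB^{n/p-3}_{p,q}$ (image of the one-point compactification $[0,\infty]$); as $q<\infty$, compactness yields equi-summable Littlewood--Paley tails, i.e.\ some $j_0$ with $\sup_{\tau\ge 0}\|(I-P_{\le j_0})f(\tau)\|_{\dB^{n/p-3}_{p,q}}<\varepsilon$. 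I then split $J=J^L+J^{H,e}+J^{H,\ell}$, where $J^L$ corresponds to the low frequencies $|j|\le j_0$, and $J^{H,e},J^{H,\ell}$ to the high-frequency part $(I-P_{\le j_0})f$ restricted respectively to $\tau\in[0,T_0]$ and $\tau\in[T_0,t]$.

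For $J^L$, only finitely many dyadic modes $|j|\le j_0+2$ contribute; for each such $j$ the scalar $\|\Delta_j f(\tau)\|_{L^p}$ is bounded, continuous and vanishes at infinity, so a standard time splitting combined with the pointwise heat-kernel bound $\|\Delta_j e^{s\Delta}\|_{L^p\to L^p}\lesssim e^{-cs 2^{2j}}$ gives $\|\Delta_j J^L(t)\|_{L^p}\to 0$ for each such $j$, hence $\|J^L(t)\|_{\dB^{n/p-1}_{p,q}}\to 0$. For $J^{H,e}$, one writes $J^{H,e}(t)=e^{(t-T_0)\Delta}K$ for $t\ge T_0$, where $K:=\int_0^{T_0}e^{(T_0-\tau)\Delta}\mathbb{P}(I-P_{\le j_0})f(\tau)\,d\tau$. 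On the bounded interval $[0,T_0]$ the continuous image $f([0,T_0])$ is compact in $\dB^{n/p-3}_{p,q}$, which makes $\|f\|_{\widetilde{L^{\infty}}(0,T_0;\dB^{n/p-3}_{p,q})}$ finite, so Lemma~\ref{lemm:linear} with $\rho=r=\infty$ applies and yields $K\in\dB^{n/p-1}_{p,q}$; then $\|e^{(t-T_0)\Delta}K\|_{\dB^{n/p-1}_{p,q}}\to 0$ as $t\to\infty$ by the dominated convergence theorem applied termwise to the $\ell^q$ series.

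The main obstacle is the late high-frequency term $J^{H,\ell}$, for which I must establish $\|J^{H,\ell}(t)\|_{\dB^{n/p-1}_{p,q}}\le C\varepsilon$ uniformly in $t\ge T_0$ using only the Bochner smallness $\sup_{\tau\ge T_0}\|(I-P_{\le j_0})f(\tau)\|_{\dB^{n/p-3}_{p,q}}<\varepsilon$; the Chemin--Lerner norm of this tail need not be finite, so Lemma~\ref{lemm:linear} at the endpoint $r=\rho=\infty$ is unavailable. My approach is to combine the frequency-localized heat estimate with Jensen's inequality in time to derive
\begin{equation*}
\|J^{H,\ell}(t)\|_{\dB^{n/p-1}_{p,q}}^q\le C\int_{T_0}^t\sum_{|j|>j_0}2^{2j}e^{-c(t-\tau)2^{2j}}c_j(\tau)\,d\tau,\qquad c_j(\tau):=\bigl(2^{(n/p-3)j}\|\Delta_j f(\tau)\|_{L^p}\bigr)^q,
\end{equation*}
with the pointwise constraint $\sum_{|j|>j_0}c_j(\tau)<\varepsilon^q$ for $\tau\ge T_0$, and then bound the right-hand side by $C\varepsilon^q$ using Fubini together with the observation that each single-mode kernel $2^{2j}e^{-c(t-\tau)2^{2j}}\,d\tau$ has total mass $1/c$ independent of $j$, while its concentration window in $\tau$ shifts dyadically with $j$, so that distinct modes contribute to the time integral over almost disjoint windows. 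The hardest technical step is ruling out the superficially divergent bound $\varepsilon^q\int d\tau/(t-\tau)$ produced by the crude estimate $\sup_j 2^{2j}e^{-cs 2^{2j}}\lesssim 1/s$; the resolution uses the change of variables $u=c(t-\tau)2^{2j}$ to convert the time integral into what is essentially a sum over modes, controlled by the uniform $\ell^q$-mass bound. Combining the three pieces yields $\limsup_{t\to\infty}\|J(t)\|_{\dB^{n/p-1}_{p,q}}\le C\varepsilon$, and arbitrariness of $\varepsilon$ gives the conclusion.
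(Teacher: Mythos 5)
Your decomposition (low frequency, then high frequency split into early and late time, using compactness of the orbit $\{f(\tau):\tau\ge0\}\cup\{0\}$ in $\dB^{n/p-3}_{p,q}$ to obtain equi-small $\ell^q$-tails) is a genuinely different organization from the paper's, which simply splits $\int_0^t=\int_0^{t/2}+\int_{t/2}^t$, bounds $2^{(n/p-1)j}\|\Delta_jJ(t)\|_{L^p}$ by $C\bigl(e^{-ct2^{2j-1}}2^{(n/p-3)j}\|\Delta_jf\|_{L^\infty(0,\infty;L^p)}+2^{(n/p-3)j}\|\Delta_jf\|_{L^\infty(t/2,t;L^p)}\bigr)$ and invokes dominated convergence over $j$. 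However, two of your steps are incorrect as stated, and one of them is fatal. First, compactness of $f([0,T_0])$ does \emph{not} imply $\|f\|_{\widetilde{L^\infty}(0,T_0;\dB^{n/p-3}_{p,q})}<\infty$: compactness only yields $\sup_\tau\sum_{|j|>J}\bigl(2^{(n/p-3)j}\|\Delta_jf(\tau)\|_{L^p}\bigr)^q\to0$ as $J\to\infty$, which is strictly weaker than $\sum_j\sup_\tau(\cdots)^q<\infty$ because the supremum in $\tau$ and the $\ell^q$-sum in $j$ do not commute (for instance, letting mode $j$ have amplitude $d_k$ on a time window of width $\sim2^{-k}$ when $j\in[2^k,2^{k+1})$ gives a continuous orbit satisfying your hypotheses whenever $\sum_kd_k^q<\infty$, yet $\sum_j\sup_\tau a_j(\tau)^q=\sum_k2^kd_k^q$ can diverge). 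This gap is benign, though: you never need $K\in\dB^{n/p-1}_{p,q}$; the Bochner integral on $[0,T_0]$ gives $K\in\dB^{n/p-3}_{p,q}$ outright, and $\|e^{s\Delta}K\|_{\dB^{n/p-1}_{p,q}}\le\bigl(\sup_j2^{2j}e^{-cs2^{2j}}\bigr)\|K\|_{\dB^{n/p-3}_{p,q}}\lesssim s^{-1}\|K\|_{\dB^{n/p-3}_{p,q}}\to0$ suffices.

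The serious gap is $J^{H,\ell}$. After Jensen and Fubini you must bound $\int_{T_0}^t\sum_{|j|>j_0}2^{2j}e^{-c(t-\tau)2^{2j}}c_j(\tau)\,d\tau$ using only the pointwise-in-$\tau$ constraint $\sum_{|j|>j_0}c_j(\tau)<\varepsilon^q$. The key claim you invoke, that the single-mode kernels concentrate in almost-disjoint time windows, is false: the kernel $2^{2j}e^{-c(t-\tau)2^{2j}}\,d\tau$ for mode $j$ concentrates on $\tau\in[t-C2^{-2j},t]$, and these intervals are nested, all sharing the endpoint $\tau=t$. At a fixed $\tau$ with $s=t-\tau$, every mode with $2^{2j}\lesssim s^{-1}$ has a kernel value of order $2^{2j}$ that is not yet exponentially suppressed, so placing the $\ell^q$-mass on the single mode with $2^{2j}\sim s^{-1}$ realizes the value $\varepsilon^q/s$ for the integrand and reproduces the divergent bound $\varepsilon^q\int_{T_0}^t(t-\tau)^{-1}d\tau$ that you set out to avoid. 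The change of variables $u=c(t-\tau)2^{2j}$ does not fix this: it produces $c^{-1}\sum_j\int_0^\infty e^{-u}c_j(t-u2^{-2j}/c)\,du$, and for fixed $u$ the time argument $t-u2^{-2j}/c$ varies with $j$, so the pointwise $\ell^q$-bound at a single $\tau$ cannot be applied to the sum over $j$. What is missing is an argument that a single, $t$-independent $f$ cannot simultaneously realize that pointwise worst case at every $\tau$; your sketch does not supply it.
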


\begin{proof}[Proof of Lemma \ref{lemm:converge}]
We see that
\begin{align}
&\left\|\Delta_j \int_0^t e^{(t-\tau)\Delta}\mathbb{P}f(\tau)d\tau\right\|_{L^p}\\
&\quad\leq C\int_0^t e^{-c2^{2j}(t-\tau)}\|\Delta_j f(\tau)\|_{L^p}d\tau\\
&\quad\leq C\sup_{\tau>0}\|\Delta_j f(\tau)\|_{L^p}\int_0^{t/2} e^{-c2^{2j}(t-\tau)}d\tau\\
&\qquad +C\sup_{t/2\leq \tau\leq t}\|\Delta_j f(\tau)\|_{L^p}\int_{t/2}^{t} e^{-c2^{2j}(t-\tau)}d\tau\\
&\quad\leq Ce^{-c2^{2j-1}t}2^{-2j}\sup_{\tau>0}\|\Delta_j f(\tau)\|_{L^p}
+C2^{-2j}\sup_{t/2\leq \tau\leq t}\|\Delta_j f(\tau)\|_{L^p}.
\end{align}
Hence we obtain the estimate as
\begin{align}
\left\|\int_0^t e^{(t-\tau)\Delta}\mathbb{P}f(\tau)d\tau\right\|_{\dB^{n/p-1}_{p,q}}
&\leq C\left\{\sum_{j\in\mathbb{Z}}\left(e^{-c2^{2j-1}t}2^{\sp{n/p-3}j}\|\Delta_j f\|_{L^\infty(0,\infty;L^p)}\right)^q\right\}^{1/q} \\
&\quad+ C\left\{\sum_{j\in\mathbb{Z}}\left(2^{\sp{n/p-3}j}\|\Delta_j f\|_{L^\infty(t/2, t;L^p)}\right)^q\right\}^{1/q}.
\end{align}
Hence, the dominated convergence theorem completes the proof.
\end{proof}

\begin{proof}[Proof of Theorem \ref{prop:nD-non-osci}]
The central task is to derive the lower bound estimate in \eqref{lim:w}.
To obtain this, we follow the standard method for the ill-posedness and consider the decomposition of the solution:
\begin{align}
    u_{\varepsilon}
    =
    u_{\varepsilon}^{(1)}
    +
    u_{\varepsilon}^{(2)}
    +
    \widetilde{u}_{\varepsilon},
\end{align}
where $u_{\varepsilon}^{(1)}$ and $u_{\varepsilon}^{(2)}$ are the first and second order iterations respectively defined by 
\begin{align}\label{eq:nD:w}
    \begin{cases}
        \partial_t u_{\varepsilon}^{(1)} - \Delta u_{\varepsilon}^{(1)} 
        =f_{\varepsilon}, \\
        \div u_{\varepsilon}^{(1)}=0,  \\
        u_{\varepsilon}^{(1)}(0,x)=0,
    \end{cases}
    \begin{cases}
        \partial_t u_{\varepsilon}^{(2)} - \Delta u_{\varepsilon}^{(2)} 
        + 
        \mathbb{P}\div \sp{u_{\varepsilon}^{(1)} \otimes u_{\varepsilon}^{(1)}}
        =0, \\
        \div u_{\varepsilon}^{(2)}=0, \\
        u_{\varepsilon}^{(2)}(0,x)=0,
    \end{cases}
\end{align}
and the remainder part $\widetilde{u}_{\varepsilon}$ should solve the following system:
\begin{align}
    \begin{cases}
        \begin{aligned}
        \partial_t \widetilde{u}_{\varepsilon} - \Delta \widetilde{u}_{\varepsilon}
        + 
        &
        \mathbb{P}\div 
        \left(
        u_{\varepsilon}^{(1)} \otimes u_{\varepsilon}^{(2)} 
        + 
        u_{\varepsilon}^{(2)} \otimes u_{\varepsilon}^{(1)}
        + 
        u_{\varepsilon}^{(2)} \otimes u_{\varepsilon}^{(2)}
        \right.\\
        &\qquad\qquad
        \left.
        +
        u_{\varepsilon}^{(1)} \otimes \widetilde{u}_{\varepsilon} 
        + 
        u_{\varepsilon}^{(2)} \otimes \widetilde{u}_{\varepsilon}
        + 
        \right.\\
        &\qquad\qquad
        \left.
        +
        \widetilde{u}_{\varepsilon} \otimes u_{\varepsilon}^{(1)}
        + 
        \widetilde{u}_{\varepsilon} \otimes u_{\varepsilon}^{(2)}
        + 
        \widetilde{u}_{\varepsilon} \otimes \widetilde{u}_{\varepsilon}
        \right)
        =0,
        \end{aligned}
        \\
        \div \widetilde{u}_{\varepsilon}=0, \\
        \widetilde{u}_{\varepsilon}(0,x)=0.
    \end{cases}
\end{align}
Assuming for a while that there exists an external force $f_{\varepsilon} \in  \widetilde{C}([0,\infty);\dB_{p,q}^{n/p-3}(\mathbb{R}^n))$ that satisfies \eqref{decay:g}, 
\begin{align}\label{key:w^1}
    \n{u_{\varepsilon}^{(1)}}_{L^{\infty}(0,\infty;L^{n,\infty})}
    \leq 
    C\eta,\quad
    \lim_{t \to \infty}
    \n{u_{\varepsilon}^{(1)}(t)}_{\dB_{p,q}^{n/p-1}}=0,
\end{align}
and
\begin{align}\label{key:w^2}
    \liminf_{t \to \infty}
    \n{u_{\varepsilon}^{(2)}(t)}_{\dB_{\infty,\infty}^{-1}} \geq c\eta^2.
\end{align}
Then, from Lemma \ref{lemm:Yamazaki} it follows that 
\begin{align}\label{w^2:L^n}
    \n{u_{\varepsilon}^{(2)}}_{L^{\infty}(0,\infty;L^{n,\infty})}
    \leq
    C\n{u_{\varepsilon}^{(1)}}_{L^{\infty}(0,\infty;L^{n,\infty})}^2
    \leq 
    C\eta^2.
\end{align}
Using the contraction mapping principle via Lemma \ref{lemm:Yamazaki}, we see that we may construct the solution $\widetilde{u}_{\varepsilon} \in L^{\infty}(0,\infty;L^{n,\infty}(\mathbb{R}^n))$ satisfying
\begin{align}\label{tw:L^n}
    \n{\widetilde{u}_{\varepsilon}}_{L^{\infty}(0,\infty;L^{n,\infty})}
    \leq
    C\eta^3
\end{align}
provided that $\eta$ is sufficiently small.
We then see by \eqref{key:w^1}, \eqref{key:w^2}, and \eqref{w^2:L^n}, \eqref{tw:L^n}, and the embedding $\dB_{\infty,\infty}^{-1}(\mathbb{R}^n) \hookrightarrow \dB_{p,q}^{n/p-1}(\mathbb{R}^n) \cap L^{n,\infty}(\mathbb{R}^n)$ that the solution $u_{\varepsilon}=u_{\varepsilon}^{(1)}+u_{\varepsilon}^{(2)}
+\widetilde{u}_{\varepsilon}$ satisfies
\begin{align}
    \liminf_{t \to \infty}
    \n{u_{\varepsilon}(t)}_{\dB_{\infty,\infty}^{-1}}
    \geq{}&
    \liminf_{t \to \infty}\n{u_{\varepsilon}^{(2)}(t)}_{\dB_{\infty,\infty}^{-1}}\\
    &
    -
    C
    \lim_{t \to \infty}
    \n{u_{\varepsilon}^{(1)}(t)}_{\dB_{p,q}^{n/p-1}}
    -
    C
    \n{\widetilde{u}_{\varepsilon}}_{L^{\infty}(0,\infty;L^{n,\infty})}\\
    \geq{}&
    c\eta^2
    -
    C
    \eta^3
\end{align}
and 
\begin{align}
    &
    \n{u_{\varepsilon}(t)}_{L^{\infty}(0,\infty;L^{n,\infty})}
    \\
    &
    \quad
    \leq{}
    \n{u_{\varepsilon}^{(1)}}_{L^{\infty}(0,\infty;L^{n,\infty})}
    +
    \n{u_{\varepsilon}^{(2)}}_{L^{\infty}(0,\infty;L^{n,\infty})}
    +
    \n{\widetilde{u}_{\varepsilon}}_{L^{\infty}(0,\infty;L^{n,\infty})}
    \\
    &\quad
    \leq{}
    C\eta
    +
    C\eta^2
    +
    C
    \eta^3,
\end{align}
which completes the proof.

%Hence, it suffices to construct the external force ensuring \eqref{key:w^1} and \eqref{key:w^2}. 
%To achieve this goal, we first construct the first approximation $u^{(1)}_{\varepsilon}$,
%and after that, we construct the external force $f_\varepsilon$ later. 
%The reason is that we should incorporate the time variable $t$ into the parameter that plays a role analogous to $\delta$ used in Section \ref{sec:nD}, 
%in order to cause the nonlinear estimate to break down as $t\to\infty$. 
%In such a situation, if we tried to construct the external force $f_\varepsilon$ first,
%then when deriving the first approximation $u^{(1)}_\varepsilon$ from $f_\varepsilon$, that is, 
%when taking the Duhamel integral of $f_\varepsilon$,
%the factor causing the breakdown of the nonlinear estimate
%(that is $g_{\delta,\eta}$ in Section \ref{sec:nD})
% would be significantly deformed,
%which makes the calculation much more difficult.
Hence, it suffices to construct an external force that guarantees 
\eqref{key:w^1} and \eqref{key:w^2}. 
To this end, we first construct the first approximation 
$u^{(1)}_{\varepsilon}$, and only afterwards define the external force 
$f_\varepsilon$. 
The reason is that the time variable $t$ should be incorporated into 
the parameter playing a role analogous to $\delta$ used in 
Section~\ref{sec:nD}, so that the nonlinear estimate breaks down as 
$t \to \infty$. 
In this setting, if we attempted to construct the external force 
$f_\varepsilon$ first, then in deriving the first approximation 
$u^{(1)}_\varepsilon$ from $f_\varepsilon$---that is, when taking the 
Duhamel integral of $f_\varepsilon$---the factor responsible for the 
breakdown of the nonlinear estimate (namely $g_{\delta,\eta}$ in 
Section~\ref{sec:nD}) would be significantly distorted, which would 
render the computation much more complicated.

First, we consider the case of $n<p\leq\infty$ and $1\leq q<\infty$.
Let $\Psi(x)$ be as that of \eqref{eq:Psi},
and define
\begin{align}
    &
    \begin{aligned}
    u_{\varepsilon}^{(1)}(t,x)
    :={}&
    \eta
    \Psi(x)
    \cos\sp{\varepsilon^{-1/(1-n/p)}(1+\varepsilon^{-1/(1-n/p)}t)x_1}\\
    &-
    \eta
    e^{t\Delta}\lp{\Psi(x)
    \cos\sp{\varepsilon^{-1/(1-n/p)}x_1}}\\
    =:{}&
    u_{\varepsilon}^{(1;1)}(t,x)+u_{\varepsilon}^{(1;2)}(t,x),
    \end{aligned}
    \\
    &
    \begin{aligned}
    u_{\varepsilon}^{(2)}(t,x)
    :={}&
    -
    \int_{0}^t
    e^{(t-\tau)\Delta}
    \mathbb{P}\div\sp{u_{\varepsilon}^{(1)}(\tau) \otimes u_{\varepsilon}^{(1)}(\tau)}(x)
    d\tau,
    \end{aligned}
\end{align}
and
\begin{align}
    f_{\varepsilon}(t,x)
    :={}&
    \sp{
    \partial_tu_{\varepsilon}^{(1)}
    -
    \Delta u_{\varepsilon}^{(1)}
    }(t,x)\\
    ={}&
    -\eta
    \Psi(x)
    \varepsilon^{-2/(1-n/p)} x_1
    \sin\sp{\varepsilon^{-1/(1-n/p)}(1+\varepsilon^{-1/(1-n/p)}t)x_1}\\
    &
    -\eta\varepsilon
    \lp{
    \Psi(x)
    \cos\sp{\varepsilon^{-1/(1-n/p)}(1+\varepsilon^{-1/(1-n/p)}t)x_1}
    }.
\end{align}
Note that the $u_{\varepsilon}^{(1)}$ and $u_{\varepsilon}^{(2)}$ are the first and second iteration of \eqref{eq:nD:w}, respectively.
For the estimates of these functions, we firstly see that 
\begin{align}\label{w^1-L^n}
    \n{u_{\varepsilon}^{(1)}}_{L^{\infty}(0,\infty;L^{n,\infty})}
    \leq
    \n{u_{\varepsilon}^{(1)}}_{L^{\infty}(0,\infty;L^{n})}
    \leq 
    C\eta.
\end{align}
By Lemma \ref{lemm:cos},
it follows 
\begin{align}
    &
    \n{u_{\varepsilon}^{(1)}}_{\widetilde{L^{\infty}}(0,\infty;\dB_{p,q}^{n/p-1})}
    \leq
    C\eta\varepsilon
\end{align}
and 
\begin{align}
    &
    \n{u_{\varepsilon}^{(1;1)}(t)}_{\dB_{p,q}^{n/p-1}}
    \leq
    C\eta\varepsilon(1+\varepsilon^{-1/(1-n/p)}t)^{-(1-n/p)}
    \to 0, 
    \\
    &
    \n{u_{\varepsilon}^{(1;2)}(t)}_{\dB_{p,q}^{n/p-1}}
    \to 
    0,
    \quad
    \n{u_{\varepsilon}^{(1;2)}(t)}_{L^n}
    \to 
    0
\end{align}
as $t \to \infty$.
Moreover, since $\supp{\widehat{f_{\varepsilon}(t)}} \subset \supp{\widehat{u^{(1;1)}_\varepsilon(t)}}$,
we see that $f_{\varepsilon} \in \widetilde{C}([0,\infty);\dB_{p,q}^s(\mathbb{R}^n))$ for any $s \in \mathbb{R}$ and 
\begin{align}
    \n{f_{\varepsilon}}_{\widetilde{L^{\infty}}(0,\infty;\dB_{p,q}^{n/p-1})}
    \leq
    C\eta\varepsilon
\end{align}
and 
\begin{align}
    \n{f_{\varepsilon}(t)}_{\dB_{p,q}^{n/p-1}}
    \leq
    C\eta\varepsilon\sp{1+\varepsilon^{-1/(1-n/p)}t}^{-(1-n/p)}
    \to 
    0
\end{align}
as $t \to \infty$.
We next focus on the estimates of $u_{\varepsilon}^{(2)}$.
It holds 
\begin{align}
    &\div\sp{u_{\varepsilon}^{(1;1)}(\tau,x) \otimes u_{\varepsilon}^{(1;1)}(\tau,x)}
    \\
    &\quad
    ={}
    \eta^2
    \Phi(x)
    -
    \eta^2
    \Phi(x)
    \cos\sp{2\varepsilon^{-1/(1-n/p)}(1+\varepsilon^{-1/(1-n/p)}t)x_1}\\
    &\quad
    =:{}
    \eta^2\Phi(x)+\eta^2\Theta_{\varepsilon}(t,x),
\end{align}
where $\Phi$ is that of \eqref{eq:Phi}.
Thus, $u_{\varepsilon}^{(2)}$ can be written as
\begin{align}
    u_{\varepsilon}^{(2)}(t)
    ={}&
    -
    \sum_{1 \leq k,\ell \leq 2}
    \int_{0}^t e^{(t-\tau)\Delta}\mathbb{P}\div\sp{u_{\varepsilon}^{(1;k)}(\tau)\otimes u_{\varepsilon}^{(1;\ell)}(\tau)}d\tau\\
    ={}& 
    -
    \eta^2(-\Delta)^{-1}\mathbb{P}\Phi
    +
    \eta^2(-\Delta)^{-1}e^{t\Delta}\mathbb{P}\Phi
    -
    \eta^2
    \int_{0}^t
    e^{(t-\tau)\Delta}
    \mathbb{P}\Theta_{\varepsilon}(\tau)d\tau\\
    &-
    \sum_{\substack{1 \leq k,\ell \leq 2 \\ (k,\ell) \neq (1,1)}}
    \int_{0}^t e^{(t-\tau)\Delta}\mathbb{P}\div\sp{u_{\varepsilon}^{(1;k)}(\tau)\otimes u_{\varepsilon}^{(1;\ell)}(\tau)}d\tau.
\end{align}
We see that
\begin{align}
    &\n{\Theta_{\varepsilon}(t)}_{\dB^{n/p-3}_{p,q}}
    \leq 
     C\eta\varepsilon\sp{1+\varepsilon^{-1/(1-n/p)}t}^{-(3-n/p)}
    \to 
    0
\end{align}
as $t\to\infty$. Moreover, since 
\begin{align}
    \n{u_{\varepsilon}^{(1;2)}(t)}_{L^n}
    \leq C\eta t^{-(n-1)/2}\n{\Psi}_{L^{1}}
    \leq C\eta t^{-(n-1)/2}
\end{align}
by the effect of the heat kernel, it holds for every $k\in \{1,2\}$ and $\max\Mp{2,n/2}\leq r<\infty$ that
\begin{align}
    \n{\div(u_{\varepsilon}^{(1;k)}(t)\otimes u_{\varepsilon}^{(1;2)}(t))}_{\dB^{n/p-3}_{p,r}}
    &\leq C \n{u_{\varepsilon}^{(1;k)}(t)\otimes u_{\varepsilon}^{(1;2)}(t)}_{\dB^{0}_{n/2,r}}\\
    &\leq
    C\n{u_{\varepsilon}^{(1;k)}(t)\otimes u_{\varepsilon}^{(1;2)}(t)}_{L^{n/2}}\\
    &\leq
    C\n{u_{\varepsilon}^{(1;k)}(t)}_{L^n}\n{u_{\varepsilon}^{(1;2)}(t)}_{L^n}\\
    &\to 0
\end{align}
as $t\to\infty$,
where we have used the continuous embedding $L^{n/2}(\mathbb{R}^n) \simeq \dot{F}^0_{n/2,2}(\mathbb{R}^n)
\hookrightarrow\dB^0_{n/2,r}(\mathbb{R}^n)$
($\dot{F}^s_{p,q}(\mathbb{R}^n)$ denotes the Triebel--Lizorkin space; see \cite{Saw-18} for details).
Therefore, by the density argument and Lemma \ref{lemm:converge}, we have
\begin{align}
    &
    \lim_{t \to \infty}
    \n{(-\Delta)^{-1}e^{t\Delta}\mathbb{P}\Phi}_{\dB_{p,q}^{n/p-1}}
    =0,\label{lim:psi-1}\\
    &
    \lim_{t \to \infty}
    \n{\int_{0}^t
    e^{(t-\tau)\Delta}
    \mathbb{P}\Theta_{\varepsilon}(\tau)d\tau}_{\dB_{p,q}^{n/p-1}}
    =0 
    ,\label{lim:psi-2}\\
    &
    \lim_{t \to \infty}
    \sum_{\substack{1 \leq k,\ell \leq 2 \\ (k,\ell) \neq (1,1)}}
    \n{ \int_{0}^t e^{(t-\tau)\Delta}\mathbb{P}\div\sp{u_{\varepsilon}^{(1;k)}(\tau)\otimes u_{\varepsilon}^{(1;\ell)}(\tau)}d\tau}_{\dB_{p,r}^{n/p-1}}
    =0.\label{lim:psi-3}
\end{align}
Hence, we obtain the lower bound as
\begin{align}
    \liminf_{t\to \infty}
    \n{u_{\varepsilon}^{(2)}(t)}_{\dB_{\infty,\infty}^{-1}}
    \geq \eta^2\n{(-\Delta)^{-1}\mathbb{P}\Phi}_{\dB^{-1}_{\infty,\infty}}
    \geq c\eta^2.
\end{align}
This completes the proof
in the case of $n<p\leq\infty$ and $1\leq q<\infty$.

We next focus on the case of $p=n$ and $2<q<\infty$.
Let $\gamma\in BC^\infty(\mathbb{R})$ satisfy
\begin{align*}
{\bf 1}_{\Mp{|t|\geq 2/3}}(t) \leq \gamma(t) \leq {\bf 1}_{\Mp{|t|\geq 1/3}}(t),
\qquad 
\sup_{0\leq t<\infty}|\gamma'(t)|<\infty,\quad
\end{align*}
Then we set 
\begin{align*}
u^{(1)}_{\varepsilon}(t,x)
:=
\frac{\eta}{\sqrt{\log(e^{1/\varepsilon^2}+t)}}
\Psi(x)
\sum_{k=0}^{\lfloor t \rfloor}
\frac{\gamma_k(t)}{\sqrt{k+1}}
\cos(\alpha(k)x_1),
\end{align*}
where 
$\lfloor t \rfloor$ denotes the floor function of $t$, 
$\alpha(k)=2^{(k+10)^2}$, and
$\gamma_k(t):=\gamma(t-k)$.
We see that $\div u^{(1)}_\varepsilon=0$, $u^{(1)}_\varepsilon(0,x)=0$, and
\begin{align}
\n{u^{(1)}_\varepsilon(t)}_{\dB^0_{n,q}}
&
\leq \frac{C\eta}{\sqrt{\log(e^{1/\varepsilon^2}+t)}}
\left(1+\sum_{k=1}^{\lfloor t \rfloor}k^{-q/2}\right)^{1/q}
\\
&\leq 
\begin{cases}
C\eta, &q=2,\\
\dfrac{C\eta}{\sqrt{\log(e^{1/\varepsilon^2}+t)}}, &2<q\leq\infty.
\end{cases}
\end{align}
Since $\dB^0_{n,2}(\mathbb{R}^n)\hookrightarrow
L^n(\mathbb{R}^n)\hookrightarrow
L^{n,\infty}(\mathbb{R}^n)$, there hold  $u^{(1)}_\varepsilon\in C((0,\infty); L^{n,\infty}(\mathbb{R}^n))$ and
\begin{align}
\lim_{t\to\infty}\n{u^{(1)}_\varepsilon(t)}_{\dB^0_{n,q}}=0
\end{align}
for any $2<q<\infty$.
Moreover, $u^{(1)}_\varepsilon$ is smooth enough with respect to $t$ and $x$. 
Indeed, the smoothness with respect to $x$ is clear.
On the other hand, we see that
\begin{align*}
u^{(1)}_\varepsilon(t+h,x)
=
\frac{\eta}{\sqrt{\log(e^{1/\varepsilon^2}+t+h)}}
\Psi(x)
\sum_{k=0}^{\lfloor t \rfloor}
\frac{\gamma_k(t+h)}{\sqrt{k+1}}
\cos(\alpha(k)x_1)
\end{align*}
for $h\in\mathbb{R}$ such that $|h|\ll 1$, which is valid even when $t=\lfloor t \rfloor$ since $\gamma(h)=0$.
Hence $u_1$ is differentiable by $t$ in $[0,\infty)$, and
\begin{align}
\partial_t u^{(1)}_\varepsilon(t,x)
&=
-\frac{\eta}{2(e^{1/\varepsilon^2}+t)\Mp{\log(e^{1/\varepsilon^2}+t)}^{3/2}}
\Psi(x)
\sum_{k=0}^{\lfloor t \rfloor}
\frac{\gamma_k(t)}{\sqrt{k+1}}
\cos(\alpha(k)x_1)\\
&\qquad +
\frac{\eta}{\sqrt{\log(e^{1/\varepsilon^2}+t)}}
\Psi(x)
\frac{(\partial_t\gamma)(t-\lfloor t \rfloor)}{\sqrt{\lfloor t \rfloor+1}}
\cos(\alpha(\lfloor t \rfloor)x_1).
\end{align}
Therefore, we can define $f_\varepsilon$ and $u^{(2)}_\varepsilon$ by \eqref{eq:nD:w}.
Since $\partial_t\gamma\in L^\infty(0,\infty)$, $(-\Delta)^{-1}f_\varepsilon$ has almost same smallness and decaying property as $u^{(1)}_\varepsilon$, that is, $f_\varepsilon\in C([0,\infty); \dB^{-2}_{n,q}(\mathbb{R}^n))$ and
\begin{align}
&\sup_{0\leq t<\infty}\|f_\varepsilon(t)\|_{\dB^{-2}_{n,q}}
\leq \frac{C\eta}{\sqrt{\log e^{1/\varepsilon^2} }}=C\eta\varepsilon,\\
&\|f_\varepsilon(t)\|_{\dB^{-2}_{n,q}}
\leq
\frac{C\eta}{\sqrt{\log (e^{1/\varepsilon^2}+t) }}
\to\infty,\qquad {\rm as}\ t\to\infty
\end{align}
for any $2<q<\infty$. 

Now we focus on $u^{(2)}_\varepsilon$. 
We see that
\begin{align*}
&\div (u^{(1)}_\varepsilon\otimes u^{(1)}_\varepsilon)(t,x)\\
\quad &= \frac{\eta^2}{\log(e^{1/\varepsilon^2}+t)} \Phi(x) \sum_{k,\ell=0}^{\lfloor t \rfloor} \frac{\gamma_k(t)\gamma_\ell(t)}{\sqrt{(k+1)(l+1)}}\cos(\alpha(k)x_1)\cos(\alpha(\ell)x_1) \\
\quad &
=: \eta^2\Phi(x)\left({I_1}(t)+{I_2}(t,x)\right), 
\end{align*}
where
\begin{align*}
&{I_1}(t):=
\frac{1}{\log(e^{1/\varepsilon^2}+t)}\sum_{k=0}^{\lfloor t \rfloor} 
\frac{\gamma_k^2(t)}{k+1}
,\\
&
\begin{aligned}
{I_2}(t,x)
:={}
&
\frac{1}{\log(e^{1/\varepsilon^2}+t)}
\sum_{k=0}^{\lfloor t \rfloor} \frac{\gamma_k^2(t)}{k+1}\cos(2\alpha(k)x_1),\\
&
+\frac{1}{\log(e^{1/\varepsilon^2}+t)}
\left[
\sum_{\substack{0\leq k,\ell \leq \lfloor t \rfloor\\k\neq \ell}}
\frac{\gamma_k(t)\gamma_\ell(t)}{\sqrt{(k+1)(l+1)}}
\left\{
 \cos(A_{k,\ell}x_1)+\cos(B_{k,\ell}x_1)
 \right\}
\right]
\end{aligned}
\end{align*}
with $A_{k,\ell}:=\alpha(k)+\alpha(\ell)$,
$B_{k,\ell}:=\alpha(k)-\alpha(\ell)$.
%%%%%%%%%%%%%%%%%%%%%%%%%%%%%%%%%%%%%%%%%%%%%%
Let us investigate the lower bound of $u^{(2)}_\varepsilon$ in $\dot B^{-1}_{\infty, \infty}$. 
Since $2\alpha(k)$, 
$A_{k,\ell}$, 
and 
$|B_{k,\ell}|$ 
are large enough, 
we see for any $j\leq 2$ that
\begin{align*}
\Delta_j 
\lp{\mathbb{P}\div (u^{(1)}_\varepsilon\otimes u^{(1)}_\varepsilon)(t)}
= 
\eta^2
{I_1}(t)
\Delta_j
\mathbb{P}
\Phi,
\end{align*}
which means
\begin{align*}
&\|u^{(2)}_\varepsilon(t)\|_{\dB^{-1}_{\infty,\infty}}\\
&\quad\geq
\eta^2
\n{
\int_0^t e^{(t-\tau)\Delta}
{I_1}(\tau)
\mathbb{P}
\Phi d\tau
}_{\dot B^{-1}_{\infty,\infty}}\\
&\quad\geq
\eta^2
\n{
\int_0^t e^{(t-\tau)\Delta}
\mathbb{P}
\Phi d\tau
}_{\dot B^{-1}_{\infty,\infty}}
-
\eta^2
\n{
\int_0^t e^{(t-\tau)\Delta}
(1-{I_1}(\tau))
\mathbb{P}
\Phi d\tau
}_{\dot B^{0}_{n,r}}\\
&\quad:=\eta^2{J_1}(t)-\eta^2{J_2}(t)
\end{align*}
with any $1\leq r\leq \infty$.
By the same discussion as in the case $n<p\leq\infty$, we have the lower estimate
\begin{align*}
\liminf_{t \to \infty} {J_1}(t) \geq c
\end{align*}
with some constant $c>0$.
On the other hand, we see
\begin{align*}
\n{({I_1}(t)-1)\mathbb{P}\Phi}_{\dB^{-2}_{n,r}}
&\leq C|{I_1}(t)-1| \n{\nabla\psi}^2_{\dB^{n/2-1}_{2,1}}
\to 0,\qquad {\rm as}\ t\to\infty.
\end{align*}
Here we have used the inequality for every $t$ such that
\begin{align*}
\frac{1}{\log(e^{1/\varepsilon^2}+t)}\sum_{k=0}^{\lfloor t \rfloor-1} \frac{1}{k+1}
\leq {I_1}(t)
\leq \frac{1}{\log(e^{1/\varepsilon^2}+t)}\sum_{k=0}^{\lfloor t \rfloor} \frac{1}{k+1},
\end{align*}
and the fact that the left and right hand side of this inequality converge to $1$ as $t\to\infty$.
Therefore, by Lemma \ref{lemm:converge}, we have the decay
\begin{align*}
\lim_{t \to \infty} {J_2}(t) =0.
\end{align*}
We then obtain the lower bound of $u^{(2)}_\varepsilon$ as
\begin{align}
\liminf_{t\to\infty}\|u^{(2)}_\varepsilon(t)\|_{\dB^{-1}_{\infty,\infty}}>c\eta^2
\end{align}
with some constant $c>0$.
Thus, we complete the proof.
\end{proof}

\end{document}